\definecolor{dark-red}{rgb}{0.5,0.15,0.15}
\definecolor{dark-blue}{rgb}{0.15,0.15,0.6}
\definecolor{dark-green}{rgb}{0.15,0.6,0.15}
\renewcommand*{\backref}[1]{}
\renewcommand*{\backrefalt}[4]{%
  \ifcase #1 %
No citations.% use \relax if you do not want the "No citations" message
  \or
(cit. on p. #2).%
  \else
(cit on pp. #2).%
  \fi%
}
\newcommand{\euscr}[1]{\EuScript{#1}} % Euler script
\newcommand{\dcat}{\euscr{D}} % category D in Euler script
\newtheorem{thmx}{Theorem}
\numberwithin{equation}{section}% makes equat numb contain the section
\newtheorem{Thm}[equation]{Theorem}
\newtheorem*{Thm*}{Theorem}
\newtheorem{Prop}[equation]{Proposition}
\newtheorem{Lem}[equation]{Lemma}
\newtheorem{Cor}[equation]{Corollary}
\theoremstyle{remark}
\newtheorem{Def}[equation]{Definition}
\newtheorem{Not}[equation]{Notation}
\newtheorem{Exa}[equation]{Example}
\newtheorem{Conv}[equation]{Convention}
\newtheorem{Rem}[equation]{Remark}
\tikzset{
    labelrotatebelow/.style={anchor=north, rotate=90, inner sep=1.0mm}
}
\tikzset{
    labelrotateabove/.style={anchor=south, rotate=90, inner sep=1.0mm}
}
\newcommand{\nc}{\newcommand}
\nc{\dmo}{\DeclareMathOperator}
\newcommand\rightthreearrow{%
        \mathrel{\vcenter{\mathsurround0pt
                \ialign{##\crcr
                        \noalign{\nointerlineskip}$\rightarrow$\crcr
                        \noalign{\nointerlineskip}$\rightarrow$\crcr
                        \noalign{\nointerlineskip}$\rightarrow$\crcr
                }%
        }}%
}
\nc{\Beren}[1]{{\color{MidnightBlue}#1}}
\nc{\Drew}[1]{{\color{OliveGreen}#1}}
\nc{\Tobi}[1]{{\color{Orange}#1}}
\nc{\Dout}[1]{\Drew{\sout{#1}}}
\nc{\Bout}[1]{\Beren{\sout{#1}}}
\nc{\Tout}[1]{\Tobi{\sout{#1}}}
\newcommand{\pics}{\mathfrak{pic}}
\nc{\overbar}[1]{\mkern 1.5mu\overline{\mkern-1.5mu#1\mkern-1.5mu}\mkern 1.5mu}
\let\oldtikzcd\tikzcd
\def\tikzcd{{\ifnum0=`}\fi\oldtikzcd}
\let\oldendtikzcd\endtikzcd
\def\endtikzcd{\oldendtikzcd\ifnum0=`{\fi}}
\nc{\kappaaux}{g}
\nc{\kappam}{{\kappaaux({\frak m})}}
\nc{\kappaP}{{\kappaaux(\cat P)}}
\nc{\kappaQ}{{\kappaaux(\cat Q)}}
\nc{\kappaCP}{{\kappaaux_{\cat C}(\cat P)}}
\nc{\kappaDP}{{\kappaaux_{\dcat}(\cat P)}}
\nc{\kappaCQ}{{\kappaaux_{\cat C}(\cat Q)}}
\nc{\kappaDQ}{{\kappaaux_{\dcat}(\cat Q)}}
\nc{\kappaphiB}{{\kappaaux(\phi(\cat B))}}
\nc{\kappaphiQ}{{\kappaaux(\varphi(\cat Q))}}
\dmo{\Sub}{Sub}
\dmo{\Fr}{Fr}
\dmo{\Aut}{Aut}
\dmo{\Gal}{Gal}
\nc{\SpEn}{\cat S_{E(n)}}
\nc{\SpEnf}{\cat S_n}
\nc{\Lcomp}{L^{\mathrm{com}}} %I made this and the next one commands because I'm unsure of the choice of notation
\nc{\Ucomp}{U^{\mathrm{com}}}
\nc{\Loco}[1]{\Loc_{\otimes}\hspace{-0.3ex}\langle #1 \rangle}
\nc{\bbullet}{{\scriptscriptstyle\hspace{-1pt}\bullet}}
\nc{\bullett}{{\scriptscriptstyle\bullet}\hspace{-1pt}}
\nc{\LF}{L\hspace{-0.2ex}F}
\nc{\SpG}{\Sp_G}
\nc{\SpGn}{\Sp_{G,n}}
\nc{\EG}{\bbE_G}
\nc{\EH}{\bbE_H}
\nc{\DEG}{\Der(\EG)}
\nc{\DEH}{\Der(\EH)}
\nc{\DE}{\Der(\bbE)}
\nc{\Prst}{{\cat P}\mathrm{r^{st}}}
\nc{\Mack}[2]{\mathrm{Mack}_{#1}(#2)}
\nc{\SC}{S\cat C}
\dmo{\fin}{{fin}}
\dmo{\DM}{DM}
\dmo{\fp}{fp}
\nc{\DMQ}{\DM_Q}
\dmo{\DerKal}{DMack}
\dmo{\Der}{D}
\dmo{\DMot}{DMot}
\dmo{\rmH}{H}
\dmo{\piu}{\underline{\pi}}
\dmo{\Sphere}{\mathbb{S}}
\nc{\HA}{{\rmH \hspace{-0.2em}\bbA}}
\nc{\HZ}{{\rmH \hspace{-0.2em}\bbZ}}
\nc{\HZbar}{{\rmH \hspace{-0.2em}\underline{\bbZ}}}
\nc{\Fp}{{\bbF_{\hspace{-0.1em}p}}}
\nc{\HFp}{{\rmH \hspace{-0.15em}\bbF_{\hspace{-0.1em}p}}}
\nc{\DHZG}{\Der(\HZ_G)}
\nc{\DHZH}{\Der(\HZ_H)}
\nc{\DHZK}{\Der(\HZ_K)}
\nc{\DHZGN}{\Der(\HZ_{G/N})}
\nc{\DHZGG}{\Der(\HZ_{G/G})}
\nc{\DHZCp}{\Der(\HZ_{C_p})}
\nc{\DHZGprime}{\Der(\HZ_{G'})}
\nc{\DHZ}{\Der(\HZ)}
\nc{\frakp}{\mathfrak{p}}
\nc{\frakq}{\mathfrak{q}}
\nc{\Z}{\mathbb{Z}}
\nc{\SSG}{\text{sSet}_*^G}
\nc{\sSet}{\text{sSet}}
\nc{\htimes}{\widehat{\otimes}}
\newcommand{\leftrarrows}{\mathrel{\raise.75ex\hbox{\oalign{%
  $\scriptstyle\leftarrow$\cr
  \vrule width0pt height.5ex$\hfil\scriptstyle\relbar$\cr}}}}
\newcommand{\lrightarrows}{\mathrel{\raise.75ex\hbox{\oalign{%
  $\scriptstyle\relbar$\hfil\cr
  $\scriptstyle\vrule width0pt height.5ex\smash\rightarrow$\cr}}}}
\newcommand{\Rrelbar}{\mathrel{\raise.75ex\hbox{\oalign{%
  $\scriptstyle\relbar$\cr
  \vrule width0pt height.5ex$\scriptstyle\relbar$}}}}
\newcommand{\longleftrightarrows}{\leftrarrows\joinrel\Rrelbar\joinrel\lrightarrows}
\def\leftrightarrowsfill@{\arrowfill@\leftrarrows\Rrelbar\lrightarrows}
\newcommand{\xleftrightarrows}[2][]{\ext@arrow 3399\leftrightarrowsfill@{#1}{#2}}
\dmo{\csupp}{csupp}
\dmo{\Con}{Conj}
\dmo{\Id}{Id}
\dmo{\Loc}{Loc}
\dmo{\rmK}{\textrm{\rm K}}
\dmo{\Spc}{Spc}
\dmo{\thick}{thick}
\nc{\thickt}[1]{\thick_\otimes\langle #1 \rangle}
\dmo{\cone}{cone}
\dmo{\End}{End}
\dmo{\Mor}{Mor}
\dmo{\Hom}{Hom}
\dmo{\id}{id}
\dmo{\incl}{incl}
\dmo{\Img}{Im}
\dmo{\im}{im}
\dmo{\Ker}{Ker}
\dmo{\ind}{ind}
\dmo{\CoInd}{coind}
\dmo{\res}{res}
\dmo{\infl}{infl}
\dmo{\triv}{triv}
\dmo{\Tel}{Tel} %telescope
\dmo{\grMod}{grMod}%
\dmo{\opname}{op}
\dmo{\SH}{SH}% ground name for cat of spectra
\dmo{\smallb}{b}% ground exponent for ``bounded''
\dmo{\Spec}{Spec}
\dmo{\supp}{supp}
\dmo{\Supp}{Supp}
\nc{\SHc}{{\SH^c}}
\nc{\SHp}{{\SH_{(p)}}}
\nc{\SHcp}{{\SH^c_{(p)}}}
\nc{\SHG}{\SH(G)}
\nc{\SHGp}{\SH(G)_{(p)}}
\nc{\SHGc}{\SHG^c}
\nc{\SHGcp}{\SHG^c_{(p)}}
\nc{\quadtext}[1]{\quad\textrm{#1}\quad}
\nc{\qquadtext}[1]{\qquad\textrm{#1}\qquad}
\nc{\adj}{\dashv}
\nc{\adjto}{\rightleftarrows}
\nc{\bbL}{\mathbb{L}}
\nc{\bbA}{\mathbb{A}}
\nc{\bbE}{\mathbb{E}}
\nc{\bbN}{\mathbb{N}}
\nc{\bbQ}{\mathbb{Q}}
\nc{\bbZ}{\mathbb{Z}}
\nc{\bbF}{\mathbb{F}}
\nc{\cat}[1]{\mathscr{#1}}%or: \nc{\cat}[1]{\mathcal{#1}}
\nc{\ie}{{\sl i.e.}, }
\nc{\into}{\mathop{\rightarrowtail}}
\nc{\inv}{^{-1}}
\nc{\isoto}{\mathop{\overset{\sim}\to}}
\nc{\isotoo}{\mathop{\overset{\sim}\too}}
\nc{\onto}{\mathop{\twoheadrightarrow}}
\nc{\too}{\mathop{\longrightarrow}\limits}
\nc{\mapstoo}{\longmapsto}
\nc{\adh}[1]{\overline{#1}}% adherence
\nc{\adhpt}[1]{\adh{\{#1\}}}% adherence of a pt
\nc{\aka}{{a.\,k.\,a.}\ }
\nc{\calF}{\mathcal{F}}
\nc{\eg}{{\sl e.\,g.}}
\nc{\Homcat}[1]{\Hom_{\cat #1}}
\nc{\hook}{\hookrightarrow}
\nc{\ideal}[1]{\langle #1\rangle}
\nc{\ihom}{{\underline{\hom}}}
\nc{\Mid}{\,\big|\,}
\nc{\MMod}{\,\text{-}\Mod}%
\nc{\op}{^{\opname}}
\nc{\oto}[1]{\overset{#1}\to}
\nc{\otoo}[1]{\overset{#1}{\,\too\,}}
\nc{\sminus}{\!\smallsetminus\!}
\nc{\poplus}[1]{^{\oplus #1}}%
\nc{\potimes}[1]{^{\otimes #1}}% tensor power
\nc{\sbull}{{\scriptscriptstyle\bullet}}%\mathbf{\cdot}}%{}}
\nc{\SET}[2]{\big\{\,#1\Mid#2\,\big\}}
\nc{\SpcK}{\Spc(\cat K)}% most used
\nc{\then}{\Rightarrow}
\nc{\unit}{\mathbbm{1}}% unit for \otimes
\nc{\xra}{\xrightarrow}
\nc{\phigeom}[1]{\widetilde{\Phi}^{#1}}
\nc{\phigeomb}[1]{\Phi^{#1}}
\dmo{\Oname}{O}
\dmo{\proper}{proper}% for proper subgroups
\dmo{\lenormal}{\unlhd}
\dmo{\lnormal}{\lhd}
\nc{\normal}{\trianglelefteq}%\lhd
\nc{\Op}{\Oname^p}% O^p for maximal p-normal subgroup
\nc{\Oq}{\Oname^q}% as above for p=q
\dmo{\Sp}{Sp}
\dmo{\Ho}{Ho}
\dmo{\Fin}{Fin}
\dmo{\add}{add}
\dmo{\Pic}{Pic}
\dmo{\Fun}{Fun}
\dmo{\Ext}{Ext}
\dmo{\CAlg}{CAlg}
\dmo{\CMon}{CMon}
\dmo{\CC}{\cat C} %beren: I changed these, but left the O
\dmo{\DD}{\dcat}
\dmo{\OO}{\mathcal{O}}
\dmo{\Map}{Map}
\dmo{\Span}{Span}
\dmo{\N}{N}
\dmo{\Cat}{Cat}
\dmo{\colim}{colim}
\dmo{\hocolim}{hocolim}
\dmo{\Ch}{Ch}
\dmo{\A}{\mathbb{A}^{eff}}
\nc{\AGeff}{\mathbb{A}_G^{\mathrm{eff}}}
\nc{\BGeff}{\mathcal{B}_G^{\mathrm{eff}}}
\nc{\BG}{{\mathcal{B}_G}}
\nc{\NBGeff}{{\N}{\BGeff}}
\dmo{\Stable}{Stable}
\dmo{\Ab}{Ab}
\dmo{\GrAb}{GrAb}
\dmo{\Set}{Set}
\dmo{\ev}{ev}
\dmo{\Spcl}{Spcl}
\nc{\Funadd}{\Fun_{\add}}
\dmo{\proj}{proj}
\dmo{\cof}{cof}
\dmo{\Tot}{Tot}
\nc{\StableG}{\Stable_{(A,\Gamma)}}
\dmo{\Coideal}{Coideal}
\dmo{\gen}{gen}
\newcommand{\ComodE}{\euscr{C}omod_{E_{0}E}} % E_*E-comodules
\newcommand{\Comod}{\euscr{C}omod}
\newcommand{\QCoh}{\euscr{QC}oh}
\nc{\mT}{\kern-0.5em\mod\kern-0.1em\text{-}\cat{T}^c}
\nc{\MT}{\Mod\kern-0.1em\text{-}\cat{T}}
\newcounter{enum-resume-hack}
\newcommand{\Mod}{\euscr{M}od} % module infinity-category
\Crefname{Prop}{Proposition}{Propositions}
\Crefname{Thm}{Theorem}{Theorems}
\Crefname{Lem}{Lemma}{Lemmas}
\begin{document}

%------------------------------------------------------------------------------

\title[Invertible objects in Franke's comodule categories]{Invertible objects in Franke's comodule categories}
\author{Drew Heard}
\address{Department of Mathematical Sciences, Norwegian University of Science and Technology, Trondheim}
\email{drew.k.heard@ntnu.no}
\begin{abstract}
We study the Picard group of Franke's category of quasi-periodic $E_0E$-comodules for $E$ a 2-periodic Landweber exact cohomology theory of height $n$ such as Morava $E$-theory, showing that for $2p-2 > n^2+n$, this group is infinite cyclic, generated by the suspension of the unit. This is analogous to, but independent of, the corresponding calculations by Hovey and Sadofsky in the $E$-local stable homotopy category. We also give a computation of the Picard group of $I_n$-complete quasi-periodic $E_0E$-comodules when $E$ is Morava $E$-theory, as studied by Barthel--Schlank--Stapleton for $2p-2 \ge n^2$ and $p-1 \nmid n$, and compare this to the Picard group of the $K(n)$-local stable homotopy category, showing that they agree up to extension.  
\end{abstract}
\date{\today}

%%%
% this is a hack to remove the indentation in the address information, since drew's address line was line-breaking in a slightly ugly way
%%%
\makeatletter
\patchcmd{\@setaddresses}{\indent}{\noindent}{}{}
\patchcmd{\@setaddresses}{\indent}{\noindent}{}{}
\patchcmd{\@setaddresses}{\indent}{\noindent}{}{}
\patchcmd{\@setaddresses}{\indent}{\noindent}{}{}
\makeatother
\maketitle
\section{Introduction}
By the pioneering work of Quillen and Morava, it is known that the stable homotopy category has a `height' stratification, corresponding to the height filtration of formal groups. For example, height 0 is essentially rational stable homotopy theory, and in this case this has an entirely algebraic model, as a consequence of work of Serre \cite{Serre1953Groupes}.

The chromatic convergence theorem of Hopkins and Ravenel \cite{Ravenel1992Nilpotence} shows that for any $p$-local finite spectrum $X$, there is an equivalence $X \simeq \lim L_nX$, where $L_nX$ is the Bousfield localization with respect to a $p$-local Landweber exact homology theory $E$ of height $n$. Via certain chromatic squares, the computation of $L_nX$ can be roughly reduced to a computation of $L_{K(n)}X$, the Bousfield localization with respect to Morava $K$-theory $K(n)$. Correspondingly, the study of the $E$-local and $K(n)$-local categories, denoted $\Sp_n$ and $\Sp_{K(n)}$ respectively, are of great interest to homotopy theorists, and have been studied in detail in \cite{HoveyStrickl1999Morava}. 

One observation is that when the prime $p$ is large compared to $n$, the categories $\Sp_n$ and $\Sp_{K(n)}$ simplify, and become more algebraic. For example, in both cases the Adams spectral sequence computing $\pi_*(L_nS^0)$ or $\pi_*(L_{K(n)}S^0)$ collapses for $p \gg n$. Additionally, the Picard groups of the two categories become completely algebraic when $p \gg n$. Inspired by unpublished work of Franke \cite{franke1996uniqueness}, we have the following two results that make this asymptotic algebraic behavior more precise. 
\begin{Thm*} [Barthel--Schlank--Stapleton \cite{BarthelSchlankStapleton2020Chromatic}] 
For any non-principal ultrafilter $\mathcal{F}$ on the set of primes, there is an equivalence of symmetric monoidal stable $\infty$-categories
    \[
\prod_{\mathcal F}\Sp_n \simeq \prod_{\mathcal F} \dcat^{per}(\Comod_{E_0E}). 
    \]
\end{Thm*}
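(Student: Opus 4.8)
The plan is to strip away the prime-by-prime variation using the nature of non-principal ultrafilters, realize both sides as modules over $\mathbb{E}_\infty$-rings, and thereby reduce the statement to an $\mathbb{E}_\infty$-ring equivalence that encodes the collapse and formality of the $E$-based Adams spectral sequence in the ``algebraic range'' $2p-2>n^2+n$. Since $\mathcal{F}$ is non-principal it contains every cofinite set of primes, in particular $\{\,p : 2p-2>n^2+n\,\}$, and the ultraproduct over $\mathcal{F}$ of a family of symmetric monoidal stable $\infty$-categories depends only on the factors indexed by a member of $\mathcal{F}$; so we may restrict to primes in this range. By the structure theory of ultraproducts (protoproducts) of compactly generated symmetric monoidal stable $\infty$-categories, such an ultraproduct is controlled by the corresponding (proto)product of the subcategories of compact objects, so it suffices to compare, for each $p$ in the algebraic range, the small symmetric monoidal stable $\infty$-categories $(\Sp_n)^{\omega}$ and $\dcat^{per}(\Comod_{E_0E})^{\omega}$ — and these comparisons need be neither canonical nor compatible across primes, only to exist.

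Next I would produce module-category models. The category $\Sp_n$ is compactly generated by its unit $L_n\Sphere_{(p)}$, and since $L_n=L_E$ is smashing (the smash product theorem \cite{Ravenel1992Nilpotence}) there is a symmetric monoidal equivalence $\Sp_n \simeq \Mod_{L_n\Sphere_{(p)}}$, whence $(\Sp_n)^{\omega} \simeq \mathrm{Perf}(L_n\Sphere_{(p)})$. Likewise $\dcat^{per}(\Comod_{E_0E})$ is compactly generated by its unit $\unit$ — a structural feature of Franke's category \cite{franke1996uniqueness}, following from the fact that $E_*E$ (with $E=E_p$ a $2$-periodic Landweber exact height-$n$ theory at the prime $p$) is an Adams Hopf algebroid, together with the quasi-periodicity, which collapses the internal suspensions — so, writing $A_p:=\End_{\dcat^{per}(\Comod_{E_0E})}(\unit)$ for the $\mathbb{E}_\infty$-ring of endomorphisms of the unit, we have $\dcat^{per}(\Comod_{E_0E}) \simeq \Mod_{A_p}$ and $\dcat^{per}(\Comod_{E_0E})^{\omega} \simeq \mathrm{Perf}(A_p)$; here $\pi_*A_p$ is, up to a regrading dictated by the $2$-periodicity, the $E_2$-page $\Ext^{*,*}_{E_*E}(E_*,E_*)$ of the $E$-based Adams spectral sequence converging to $\pi_*L_n\Sphere_{(p)}$. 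Feeding these identifications into the protoproduct formalism, the theorem is reduced to producing, for $p$ in the algebraic range, an equivalence of $\mathbb{E}_\infty$-rings $L_n\Sphere_{(p)} \simeq A_p$ — that is, $\mathbb{E}_\infty$-formality of the $E$-local sphere — or, weaker but sufficient after applying $\prod_{\mathcal F}$, an equivalence $\prod_{\mathcal F} L_n\Sphere_{(p)} \simeq \prod_{\mathcal F} A_p$.

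To attack this I would use that the $E$-based Adams tower of $L_n\Sphere_{(p)}$ — which converges, $L_n\Sphere_{(p)}$ being $E$-nilpotent complete — refines to an exhaustive, complete filtered $\mathbb{E}_\infty$-ring whose associated graded recovers $A_p$. In the algebraic range the $E_2$-page has a horizontal vanishing line at filtration $\lesssim n^2+n$, so this filtered ring has only boundedly many nontrivial stages; and by the sparseness forced by the $2$-periodicity — within the relevant window the $E_2$-page is concentrated along sparse lines whose total degrees differ by at least $2p-2$ — the inequality $2p-2>n^2+n$ leaves no room for Adams differentials or for multiplicative extensions. One then expects the filtered $\mathbb{E}_\infty$-ring to split, giving $L_n\Sphere_{(p)} \simeq A_p$ prime by prime; alternatively, and with the advantage of not requiring the splitting before passing to the limit, one argues after applying $\prod_{\mathcal F}$, where the constant bound $n^2+n$ on the number of stages is negligible against the linearly growing gap $2p-2$, so the successive extensions become (coherently) split in the ultraproduct and $\prod_{\mathcal F} L_n\Sphere_{(p)} \simeq \prod_{\mathcal F} A_p$. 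If one instead takes the structured Franke--Pstragowski algebraicity equivalence as input then this step is immediate; and closer to the mechanics of \cite{BarthelSchlankStapleton2020Chromatic} one may prefer to route the comparison through the descent presentation $\Sp_{K(n)} \simeq \Mod_{E_n}^{h\mathbb{G}_n}$ and its algebraic shadow, handling $\Sp_n$ via the chromatic fracture square, so that the $p$-dependence is confined to ultraproducts of Morava $E$-theories and of the continuous cohomology of the stabilizer groups $\mathbb{G}_n$ — controlled since $\mathbb{G}_n$ has cohomological dimension $\approx n^2$ uniformly in $p$.

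I expect the genuine obstacle to be the multiplicative and higher-coherence content of this last step: not the additive vanishing-line and sparseness bookkeeping, which is classical, but showing that the $\mathbb{E}_\infty$-ring structure is formal — or that its defect splits after ultraproduct — \emph{compatibly with the symmetric monoidal structure} and uniformly enough in $p$ that the ultraproduct does not see it; this is exactly where the interplay between the constant bound $n^2+n$ and the growing gap $2p-2$ does real work. The remaining ingredients — the foundational theory of protoproducts of compactly generated symmetric monoidal stable $\infty$-categories, the identification of the correct (possibly completed) model of the ultraproduct ring $\prod_{\mathcal F} L_n\Sphere_{(p)}$, and the Morita reconstruction $\mathcal{C} \simeq \Mod_{\End(\unit)}$ — are formal, if lengthy, by comparison.
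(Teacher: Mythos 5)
This statement is not proved in the paper at all: it is quoted verbatim from Barthel--Schlank--Stapleton \cite{BarthelSchlankStapleton2020Chromatic} as background, so there is no internal proof to compare against. Measured against the actual proof in the cited reference, your proposal has a genuine gap at exactly the step you flag as ``the genuine obstacle,'' and the surrounding claims overreach.

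The reduction to a single $\mathbb{E}_\infty$-ring equivalence $\prod_{\mathcal F} L_n\Sphere_{(p)} \simeq \prod_{\mathcal F} A_p$ is where the argument breaks. Your prime-by-prime version --- ``one then expects the filtered $\mathbb{E}_\infty$-ring to split, giving $L_n\Sphere_{(p)} \simeq A_p$'' --- cannot be right as stated: it would give a symmetric monoidal equivalence $\Sp_n \simeq \dcat^{per}(\Comod_{E_0E})$ at each individual large prime, which is strictly stronger than anything known; the single-prime results of Pstr{\c a}gowski and Patchkoria--Pstr{\c a}gowski only produce equivalences of homotopy $k$-categories for $k = 2p-2-n^2-n$, and the entire point of passing to the ultraproduct is that the full $\infty$-categorical statement is only available asymptotically. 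A degenerate, sparse Adams spectral sequence gives an isomorphism of homotopy rings, but upgrading that to an $\mathbb{E}_\infty$-equivalence requires an obstruction theory whose obstruction groups live in (Andr\'e--Quillen-type) cohomology of $\Ext_{E_*E}(E_*,E_*)$, and neither the vanishing line nor the $2p-2$ gap makes those vanish for the sphere. The ultraproduct variant is not justified either: ``the successive extensions become coherently split'' is an assertion, not an argument, and it is precisely the coherence that is hard.

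This is why \cite{BarthelSchlankStapleton2020Chromatic} does not argue through the endomorphism ring of the unit. Instead they descend along $L_n\Sphere_{(p)} \to E$, writing $\Sp_n$ as the totalization of the cosimplicial diagram of module categories $\Mod_{E^{\wedge \bullet+1}}$ (and likewise for the algebraic side, as recalled in \Cref{rem:bss_descent_compare} of this paper). Formality is then only needed for the rings $E^{\wedge k}$, which are even-periodic with flat, concentrated-in-even-degrees homotopy, so the relevant Goerss--Hopkins obstruction groups genuinely vanish after passing to the ultraproduct; the uniform horizontal vanishing line at $n^2+n$ is used not to split a filtration on the sphere but to bound the descent tower uniformly in $p$ so that $\prod_{\mathcal F}$ commutes with the totalization. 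Your closing aside gestures toward this route, but the main line of your argument places the formality burden on the sphere itself, where it cannot be discharged. (Your preliminary reductions --- restricting to a member of $\mathcal{F}$, the smashing identification $\Sp_n \simeq \Mod_{L_n\Sphere_{(p)}}$, and compact generation of $\dcat^{per}(\Comod_{E_0E})$ by the unit in the algebraic range, which really rests on the Landweber filtration theorem plus the Hovey--Sadofsky vanishing line rather than on quasi-periodicity --- are essentially fine.)
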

\begin{Thm*}[Pstr{\c a}gowski \cite{piotr_paper}, Patchkoria--Pstr{\c a}gowski \cite{patchkoria2021adams}]
      For $2p-2 > n^2+n$ and $k = 2p-2-n^2-n$ there is an equivalence \[
h_k\Sp_n \simeq h_k\dcat^{per}(\Comod_{E_0E})
    \]
    between the homotopy $k$-categories of the $\infty$-category of $E$-local spectra and the category of quasi-periodic $E_0E$-comodules.
\end{Thm*}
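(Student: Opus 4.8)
The plan is to present both $\Sp_n$ and $\dcat^{per}(\Comod_{E_0E})$ as instances of one piece of structure — a presentable, stable, symmetric monoidal $\infty$-category equipped with an \emph{Adams-type} ring object whose associated Adams spectral sequence has $E_2$-page $\Ext^{*,*}_{E_0E}$ — and then to apply the general algebraicity theorem of Patchkoria--Pstr{\c a}gowski (a refinement of Franke's construction): an $\infty$-category carrying such a structure is determined, through its homotopy $k$-category, by its Adams $E_2$-page together with finitely many further layers of $\infty$-operadic data, provided the comodule category is \emph{sparse} relative to its homological dimension. Since the two candidates share the comodule category $\Comod_{E_0E}$ with its symmetric monoidal structure and have isomorphic Adams $E_2$-pages — on the topological side the $E$-Adams $E_2$-page, on the algebraic side tautologically $\Ext$ over $E_0E$ — and since $E$-homology is conservative on $\Sp_n$, this will force $h_k\Sp_n \simeq h_k\dcat^{per}(\Comod_{E_0E})$ for $k$ below an explicit threshold.

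First I would check that $E$ (Morava $E$-theory, or any $2$-periodic height-$n$ Landweber-exact theory) makes $\Sp_n$ into an Adams-type symmetric monoidal $\infty$-category: $E$ is an $\mathbb{E}_\infty$-ring, $E_*E$ is a flat (Landweber-exact) Hopf algebroid, $X \mapsto E_*X$ takes values in the Grothendieck abelian category $\Comod_{E_0E}$, which has enough injectives, and the $E$-Adams spectral sequence $\Ext^{s,t}_{E_0E}(E_0, E_*X) \Rightarrow \pi_{t-s}L_nX$ converges (using $\langle E\rangle = \langle E(n)\rangle$, so that $E$-localization agrees with $L_n$). Dually, $\dcat^{per}(\Comod_{E_0E})$ is Franke's stable symmetric monoidal $\infty$-category of quasi-periodic comodules: it is the derived $\infty$-category of a Grothendieck abelian category and carries a tautological Adams-type structure whose Adams spectral sequence is the split one with $E_2$-page $\Ext$ over $E_0E$. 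Thus the discrete (zeroth-order) Adams data — the comodule category with its monoidal structure and the Adams $E_2$-page — agree on the two sides by construction.

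Two numerical inputs then drive the conclusion. The first is \emph{sparsity}: the $E$-Adams spectral sequence is $(2p-2)$-sparse, meaning $\Ext^{s,t}_{E_0E} = 0$ unless $2p-2 \mid t$; this reflects that $E$ may be replaced by the non-periodic height-$n$ theory $E(n)$ with $E(n)_* = \bbZ_{(p)}[v_1,\dots,v_{n-1},v_n^{\pm 1}]$, whose coefficients — like those of $E(n)_*E(n)$, $BP_*$ and $BP_*BP$ — lie in degrees divisible by $2p-2$ because $|v_i| = |t_i| = 2(p^i-1)$; Franke's quasi-periodic comodule category is precisely the model adapted to this sparsity. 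The second is \emph{homological dimension}: $\Comod_{E_0E}$ has finite global dimension equal to $n^2+n$, a count combining the Krull dimension $n$ of the relevant ring of functions (for $E$ Morava $E$-theory, $E_0 \cong W(\bbF_{p^n})[[u_1,\dots,u_{n-1}]]$, regular local of dimension $n$) with the cohomological dimension $n^2$ of the Morava stabilizer group $\mathbb{G}_n$ — and it is here that $p$ large genuinely enters, since $2p-2 > n^2+n$ forces $p-1 \nmid n$, so that $\mathbb{G}_n$ is $p$-torsion-free and of finite cohomological dimension. With sparsity $N = 2p-2$ and homological dimension $d = n^2+n$ in hand, the comparison machine applies. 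Morally: the $E$-Adams spectral sequence has a horizontal vanishing line above the $d$-th column and is $N$-sparse, hence — as $N > d$ — collapses at $E_2$; but collapse does not by itself recover the $\infty$-category, and the obstructions to realizing the algebra $\Ext^{*,*}_{E_0E}$ as a stable symmetric monoidal $\infty$-category, and to the uniqueness of such a realization, lie in higher cohomology groups which — by the vanishing line, together with the confinement (from $N$-sparsity) of the relevant internal twists to multiples of $N$ — vanish throughout a range of width $N-d$. Hence the $k$-truncations of any two Adams-type realizations of this algebra coincide for $k = N-d = 2p-2-n^2-n$; applied to $\Sp_n$ and to Franke's $\dcat^{per}(\Comod_{E_0E})$ — the latter realized by an explicit functor $\dcat^{per}(\Comod_{E_0E}) \to \Sp_n$ turning a twisted complex of injective $E_0E$-comodules over one period into a filtered object of $\Sp_n$ — and using conservativity of $E$-homology, this yields $h_k\Sp_n \simeq h_k\dcat^{per}(\Comod_{E_0E})$.

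The main obstacle is the comparison machine itself: constructing the moduli of Adams-type structures and running the attendant obstruction theory — a Goerss--Hopkins-style deformation argument carried out model-independently — and then bookkeeping the constants sharply enough to reach the optimal $k = 2p-2-n^2-n$ rather than a lossier bound; it is exactly this sharpening that separates the Patchkoria--Pstr{\c a}gowski range $2p-2 > n^2+n$ from Pstr{\c a}gowski's earlier $p > n^2+n+1$. The concrete inputs — Adams-type-ness of $(\Sp_n,E)$, and the value $n^2+n$ of the global dimension of $\Comod_{E_0E}$ — are each nontrivial but by now essentially standard; the genuine work lies in playing sparsity off against homological dimension, uniformly in $k$.
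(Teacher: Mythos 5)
This statement is not proved in the paper: it is an attributed background theorem, quoted from Pstr{\c a}gowski and Patchkoria--Pstr{\c a}gowski, and the paper's only ``proof'' is the citation. So there is no internal argument to compare against, and your proposal has to be judged on its own terms as a reconstruction of the cited works.

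As such a reconstruction it is a reasonable outline but not a proof: essentially all of the mathematical content is deferred to the black-boxed ``comparison machine,'' which \emph{is} the content of the cited papers. What you do correctly identify are the inputs: that $(\Sp_n,E)$ is an Adams-type setup with $E_2$-page $\Ext_{E_*E}$; the sparsity constant $N=2p-2$ coming from $|v_i|=2(p^i-1)$ (so that the comodule category splits into $2p-2$ shifted blocks, which is exactly what the quasi-periodic construction encodes); the cohomological dimension $d=n^2+n$ of $\Comod_{E_*E}$ (the Hovey--Sadofsky vanishing line, which this paper also uses in \cref{thm:hovey_strickland_local}); and the resulting range $k=N-d$. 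Your observation that $2p-2>n^2+n$ forces $p>n+1$, hence $p-1\nmid n$, is also correct and is the same arithmetic the paper uses. One substantive caveat: your final step asserts that the equivalence is ``realized by an explicit functor $\dcat^{per}(\Comod_{E_0E})\to\Sp_n$'' built from twisted complexes of injectives. That is Franke's original strategy, and it is precisely where Franke's argument had a gap; the Patchkoria--Pstr{\c a}gowski proof instead produces the equivalence of homotopy $k$-categories by an obstruction-theoretic comparison of both sides against a common truncated model (a tower of square-zero extensions of the abelian category), and the resulting equivalence is not known to be induced by an exact functor between the $\infty$-categories. If you intend to actually carry out the proof rather than cite it, that functoriality claim is the step that would fail as stated.
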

Here $\dcat^{per}(\ComodE)$ denotes the $\infty$-category of quasi-periodic $E_0E$-comodules for a 2-periodic Landweber exact cohomology theory $E$ of height $n$ (essentially) first introduced by Franke \cite{franke1996uniqueness}. Alternatively, this is the derived category of differential $E_*E$-comodules, where a differential $E_*E$-comodule is a pair $(M,d)$ consisting of an $E_*E$-comodule $M$ and $d \colon M \to M$ is a map of comodules of degree 1 satisfying $d^2 = 0$ (this is the approach taken in \cite{piotr_paper}, and the equivalence of the two approaches is given in \cite[Proposition 3.3]{piotr_paper}). 

The passage from the usual derived category $\dcat(\ComodE)$ to the quasi-periodic derived category $\dcat^{per}(\ComodE)$ is part of a more general construction due to Franke \cite{franke1996uniqueness}, expanded upon by Barnes and Roitzheim \cite{BarnesRoitzheim2011Monoidality}. We explain here a special case when restricted to the derived category of a suitable Hopf algebroid $(A,\Gamma)$ (note that by taking the Hopf algebroid $(A,A)$ this also includes the case of the derived category of a commutative ring). Given an invertible $\Gamma$-comodule $L$ and an integer $N \ge 0$, the derived category $\dcat^{(L,N)}(\Comod_{\Gamma})$ is obtained by considering the class of complexes for which there is a specified isomorphism $\alpha \colon L \otimes X \xrightarrow{\sim} X[N]$. For even periodic cohomology theories such as Morava $E$-theory, the quasi-periodic derived category $\dcat^{per}(\ComodE)$ is defined as $\dcat^{(L,2)}(\ComodE)$ for $L = E_2$, the invertible class in degree 2. 

The key result to the algebraicity theorem of Barthel--Schlank--Stapleton is the observation that for large primes $\dcat^{per}(\ComodE)$ satisfies a good theory of descent. Inspired by this, we introduce the notion of a descendable Hopf algebroid (\Cref{def:descendable_ha}). As expected, the associated quasi-periodic category satisfies a good theory of descent. In \Cref{thm:descent_hopf} we prove the following. 

\begin{thmx}\label{thm:descent_hopf_intro}
        Suppose $(A,\Gamma)$ is a descendable Hopf algebroid, and $L$ is an invertible $\Gamma$-comodule, then there is an equivalence of symmetric monoidal stable $\infty$-categories between $\dcat^{(L,N)}(\Comod_{\Gamma})$ and
\[
\Tot\left(\!\begin{tikzcd}
    {\dcat^{(L,N)}(\Mod_{A})} & {\dcat^{(\Gamma \otimes L,N)}(\Mod_{\Gamma})} & {\dcat^{(\Gamma \otimes \Gamma \otimes L,N)}(\Mod_{\Gamma \otimes \Gamma})}  \cdots
    \arrow[shift left=1, from=1-2, to=1-3]
    \arrow[shift right=1, from=1-1, to=1-2]
    \arrow[shift left=1, from=1-1, to=1-2]
    \arrow[shift right=1, from=1-2, to=1-3]
    \arrow[from=1-2, to=1-3]
\end{tikzcd} \right)
\]
\end{thmx}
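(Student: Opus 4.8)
The plan is to recognize the quasi-periodic construction $\dcat^{(L,N)}(-)$ as ``modules over an auxiliary $\bbZ$-graded algebra'', and then to run the descent machinery for descendable commutative algebras inside $\dcat^{(L,N)}(\Comod_\Gamma)$ itself.

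\textbf{Step 1: a module-category model for $\dcat^{(\ell,N)}$.} I would first establish that for a presentable symmetric monoidal stable $\infty$-category $\cat C$ with an invertible object $\ell$ and an integer $N$, setting $\mathcal R_\ell:=\bigoplus_{n\in\bbZ}\ell^{\otimes n}[-nN]$ (the evident commutative, $\ell[-N]$-periodic algebra object of $\cat C$), there is a symmetric monoidal equivalence $\dcat^{(\ell,N)}(\cat C)\simeq\Mod_{\mathcal R_\ell}(\cat C)$, sending an $\mathcal R_\ell$-module $X$ to its underlying object together with the equivalence $\ell\otimes X\simeq X[N]$ that the invertible summands of $\mathcal R_\ell$ force; equivalently $\dcat^{(\ell,N)}(\cat C)\simeq\cat C^{h\bbZ}$ for the $\bbZ$-action generated by $-\otimes\ell[-N]$. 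For $\cat C=\dcat(\ComodE)$, $\ell=E_2$, $N=2$ this identifies Franke's $\dcat^{per}(\ComodE)$ with $\Mod_{\mathcal R_{E_2}}(\dcat(\ComodE))$, compatibly with the differential-comodule description of \cite{piotr_paper} and the construction of \cite{BarnesRoitzheim2011Monoidality}. The payoff is that $\dcat^{(\ell,N)}(-)$ becomes visibly functorial and exact: the free-module functor $\cat C\to\dcat^{(\ell,N)}(\cat C)$ is colimit-preserving and symmetric monoidal, and any colimit-preserving symmetric monoidal $F\colon\cat C\to\cat D$ with $F(\ell)=\ell'$ carries $\mathcal R_\ell$ to $\mathcal R_{\ell'}$, hence induces a symmetric monoidal functor $\dcat^{(\ell,N)}(\cat C)\to\dcat^{(\ell',N)}(\cat D)$.

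\textbf{Step 2: descent and identification of the terms.} Let $\mathcal A\in\CAlg(\dcat(\Comod_\Gamma))$ be the cofree comodule on the unit, so that $\mathcal A=\Gamma$ with its canonical comodule-algebra structure. Then $\Mod_{\mathcal A^{\otimes(n+1)}}(\dcat(\Comod_\Gamma))\simeq\dcat(\Mod_{\Gamma^{\otimes_A n}})$ (with $\Gamma^{\otimes_A 0}=A$), the coface functors being induced by $\eta_R$ and $\psi$ --- this is the standard comparison of $\dcat(\Comod_\Gamma)$ with the limit over the cobar complex of $(A,\Gamma)$ --- and under it $\mathcal A$ is a descendable algebra precisely when $(A,\Gamma)$ is a descendable Hopf algebroid in the sense of \Cref{def:descendable_ha}. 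Now push $\mathcal A$ forward along the symmetric monoidal free-module functor $\dcat(\Comod_\Gamma)\to\dcat^{(L,N)}(\Comod_\Gamma)$ of Step 1: since a symmetric monoidal exact functor carries $\thickt{\mathcal A}\ni\unit$ into $\thickt{\widetilde{\mathcal A}}$, the image $\widetilde{\mathcal A}$ is again descendable, so the descent theorem for descendable algebras gives
\[
\dcat^{(L,N)}(\Comod_\Gamma)\;\simeq\;\Tot\Bigl(\Mod_{\widetilde{\mathcal A}^{\otimes(\bullet+1)}}\bigl(\dcat^{(L,N)}(\Comod_\Gamma)\bigr)\Bigr).
\]
Since $\widetilde{\mathcal A}^{\otimes(n+1)}$ is the base change of $\mathcal A^{\otimes(n+1)}$, and $\mathcal R_L$ base-changes to $\mathcal R_{\Gamma^{\otimes_A n}\otimes_A L}$ along the level-$n$ functor $\dcat(\Comod_\Gamma)\to\dcat(\Mod_{\Gamma^{\otimes_A n}})$, associativity of module categories and Step 1 identify
\[
\Mod_{\widetilde{\mathcal A}^{\otimes(n+1)}}\bigl(\dcat^{(L,N)}(\Comod_\Gamma)\bigr)\;\simeq\;\Mod_{\mathcal R_{\Gamma^{\otimes_A n}\otimes_A L}}\bigl(\dcat(\Mod_{\Gamma^{\otimes_A n}})\bigr)\;\simeq\;\dcat^{(\Gamma^{\otimes_A n}\otimes_A L,\,N)}(\Mod_{\Gamma^{\otimes_A n}}),
\]
which for $n=0,1$ reads $\dcat^{(L,N)}(\Mod_A)$ and $\dcat^{(\Gamma\otimes L,N)}(\Mod_\Gamma)$, reproducing the cosimplicial object in the statement. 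As all functors in sight are symmetric monoidal and the totalization is formed in $\CAlg(\Cat_\infty)$, the resulting equivalence is symmetric monoidal.

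\textbf{Main obstacle.} The crux will be Step 1: producing a homotopy-coherent, symmetric-monoidally natural identification of Franke's quasi-periodic derived category with the module category $\Mod_{\mathcal R_\ell}(\cat C)$ (equivalently $\cat C^{h\bbZ}$), since its original definition is model-categorical; this is also the step where the comodule structure on $L$ is genuinely used, as it is what globalizes the periodicity datum over the cobar complex so that $\mathcal R_L$ pulls back to the compatible family $\mathcal R_{\Gamma^{\otimes_A n}\otimes_A L}$. Once $\dcat^{(L,N)}$ is recognized as a module category, the remainder is a formal consequence of descendability together with the standard Morita-type identification $\Mod_{\mathcal A^{\otimes(n+1)}}(\dcat(\Comod_\Gamma))\simeq\dcat(\Mod_{\Gamma^{\otimes_A n}})$.
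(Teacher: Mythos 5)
Your proposal is correct and follows essentially the same route as the paper: the identification $\dcat^{(L,N)}(\cat C)\simeq\Mod_{\mathcal R_L}(\cat C)$ is the paper's \Cref{prop:br_module} (with $\mathcal R_L = P(\unit)$, obtained via the Barnes--Roitzheim model structure and the monoidal Barr--Beck theorem, which is exactly how the paper resolves the ``main obstacle'' you flag), descendability of the pushed-forward algebra is \Cref{rem:descent_periodic}, and the identification of the cosimplicial levels is \Cref{prop:free_forget_descendable,prop:module_categories_descendable,prop:descendable_module_Cats2}. The only cosmetic difference is that you commute the periodification and cobar module-category constructions in the opposite order when identifying the terms.
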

In the case where $(A,\Gamma)$ is the Hopf algebroid associated to an even-periodic Landweber exact cohomology theory $E$ of height $n$, this recovers a result of Barthel--Schlank--Stapleton, although our methods are somewhat different - the precise translation of the two results is given in \Cref{rem:bss_descent_compare}. The extra generality we work in is not for no reason however - in \Cref{thm:descent_mrw} we show how it also applies to a certain Hopf algebroid that appears in the change of rings theorem of Miller--Ravenel \cite{MillerRavenel1977Morava}. 

As demonstrated in work of Mathew--Stojanoska \cite{MathewStojanoska2016Picard} descent is a powerful technique for determining the group of invertible objects (the Picard group) of a symmetric monoidal category. More precisely, \Cref{thm:descent_hopf_intro} implies a description of the Picard \emph{spectrum} (see \Cref{rem:picard_spectrum_properties}) associated to the category $\dcat^{(L,N)}(\Comod_{\Gamma})$: there is an equivalence of connective spectra 
\[
\resizebox{\columnwidth}{!}{$\displaystyle
\pics(\dcat^{(L,N)}(\Comod_{\Gamma}))\simeq \tau_{\ge 0}\Tot\left(
\begin{tikzcd}[ ampersand replacement=\&, column sep=1em]
{\pics(\dcat^{(L,N)}(\Mod_{A}))}\&{\pics(\dcat^{(\Gamma \otimes L,N)}(\Mod_{\Gamma}))} \& \cdots 
    \arrow[shift left=1, from=1-2, to=1-3]
    \arrow[shift right=1, from=1-1, to=1-2]
    \arrow[shift left=1, from=1-1, to=1-2]
    \arrow[shift right=1, from=1-2, to=1-3]
    \arrow[from=1-2, to=1-3]
\end{tikzcd} \right)
$}
\]
\begin{comment}
\[
\resizebox{\columnwidth}{!}{$\displaystyle{\pics(\dcat^{(L,N)}(\Comod_{\Gamma}))  \simeq \tau_{\ge 0}\Tot\left(
\!\begin{tikzcd}[ ampersand replacement=\&]
{\pics(\dcat^{(L,N)}(\Mod_{A}))} \& {\pics(\dcat^{\Gamma \otimes L,N}(\Mod_{\Gamma}))} \& \cdots
    \arrow[shift left=1, from=1-2, to=1-3]
    \arrow[shift right=1, from=1-1, to=1-2]
    \arrow[shift left=1, from=1-1, to=1-2]
    \arrow[shift right=1, from=1-2, to=1-3]
    \arrow[from=1-2, to=1-3]
\end{tikzcd} \right)$}.
\]
\end{comment}
Studying the associated Bousfield--Kan spectral sequence in the case where $(A,\Gamma)$ is the Hopf algebroid associated to an even-periodic Landweber exact cohomology theory $E$ of height $n$, we prove the following in \Cref{thm:hovey_strickland_local}. 
\begin{thmx} Let $E$ be an even-periodic Landweber exact cohomology theory $E$ of height $n$. Suppose $2p-2 > n^2+n$, then $\Pic(\dcat^{per}(\Comod_{E_0E})) \cong \mathbb{Z}$, generated by the suspension of the unit.  
\end{thmx}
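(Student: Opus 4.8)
The plan is to run the descent spectral sequence for the Picard spectrum, in the spirit of Mathew--Stojanoska and parallel to Hovey--Sadofsky's computation of $\Pic(\Sp_n)$. Write $(A,\Gamma)=(E_0,E_0E)$ and $L=E_2$. For $2p-2>n^2+n$ this Hopf algebroid is descendable (this is essentially the key input of Barthel--Schlank--Stapleton), so \Cref{thm:descent_hopf_intro} and the subsequent description of Picard spectra apply and give an equivalence $\pics(\dcat^{per}(\Comod_{E_0E}))\simeq\tau_{\ge0}\Tot\pics(\mathcal C^{\bullet})$, where $\mathcal C^{\bullet}$ is the cosimplicial symmetric monoidal category with $\mathcal C^{s}=\dcat^{(\Gamma^{\otimes s}\otimes E_2,2)}(\Mod_{\Gamma^{\otimes s}})$ and $\Gamma^{\otimes s}$ the $s$-fold cobar ring. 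This produces a Bousfield--Kan spectral sequence $E_2^{s,t}=\pi^{s}\bigl(\pi_t\pics(\mathcal C^{\bullet})\bigr)\Rightarrow\pi_{t-s}\pics(\dcat^{per}(\Comod_{E_0E}))$, and $\Pic(\dcat^{per}(\Comod_{E_0E}))$ is the abutment in total degree $0$, assembled from the diagonal groups $E_\infty^{s,s}$.

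First I would identify the $E_2$-page. Since each $\Gamma^{\otimes s}$ is discrete and $\Gamma^{\otimes s}\otimes E_2$ is free of rank one over it, the endomorphism ring $\End_{\mathcal C^{s}}(\unit)$ is $2$-periodic with $\pi_0=\Gamma^{\otimes s}$ and vanishing odd homotopy, while $\Pic(\mathcal C^{s})\cong\bbZ/2$, generated by the suspension (the only invertible module is free, as $\Gamma^{\otimes s}$ is regular with trivial Picard group, and the defining periodicity then makes $\unit[2]\cong\unit$ at the level of $\mathcal C^{s}$). Hence: the row $t=0$ gives $E_2^{0,0}=\bbZ/2$ and $E_2^{s,0}=0$ for $s\ge1$; the row $t=1$ is the cohomology of $\mathbb G_m$ over the stack $\mathcal M$ attached to $(E_0,E_0E)$, that is, over the open substack $\mathcal M_{\mathrm{fg}}^{\le n}$ of formal groups of height $\le n$, so $E_2^{1,1}=\Pic(\mathcal M_{\mathrm{fg}}^{\le n})\cong\bbZ$, generated by the bundle $\omega$ of invariant differentials, which corresponds to $\unit[2]\cong E_2$; and for $t\ge2$ one has $E_2^{s,t}=\pi^{s}\bigl(\pi_{t-1}\End_{\mathcal C^{\bullet}}(\unit)\bigr)$, which vanishes for $t$ even and, for $t=2j+1$, equals $H^{s}(\mathcal M_{\mathrm{fg}}^{\le n},\omega^{\otimes j})=\Ext^{s,2j}_{E_*E}(E_*,E_*)$. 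In particular the diagonal entry $E_2^{s,s}$ is zero for even $s\ge2$ and equals $\Ext^{s,s-1}_{E_*E}(E_*,E_*)$ --- a group in the $(-1)$-stem of the algebraic Adams--Novikov spectral sequence --- for odd $s\ge3$.

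The decisive step, and the main obstacle, is to show that for $2p-2>n^2+n$ the only diagonal entries surviving to $E_\infty$ are $E_\infty^{0,0}=\bbZ/2$ and $E_\infty^{1,1}=\bbZ$. Here I would combine the sparsity of the cohomology of $\mathcal M_{\mathrm{fg}}^{\le n}$ (reducing to a non-periodic presentation, $H^{s}(\mathcal M_{\mathrm{fg}}^{\le n},\omega^{\otimes j})=0$ unless $(p-1)\mid j$) with the horizontal vanishing line $H^{>n^2+n}(\mathcal M_{\mathrm{fg}}^{\le n},-)=0$: the first nonzero weight on the diagonal then forces cohomological degree $\ge 2p-1>n^2+n$, so $E_2^{s,s}=0$ for all $s\ge2$; the same bookkeeping shows every target $E_2^{1+r,r}$ of a differential out of $E^{1,1}$ vanishes for $r\ge2$, so $E_\infty^{1,1}=E_2^{1,1}=\bbZ$; and $E^{0,0}$ is a permanent cycle because $\unit[1]$ is a genuine invertible object whose image in $E_\infty^{0,0}$ is the nonzero class detected by restriction to $\Pic(\mathcal C^{0})=\bbZ/2$. (Equivalently, one may invoke the collapse of the algebraic Adams--Novikov spectral sequence in this range together with the vanishing of $\pi_{-1}$ and $\pi_{-2}$ of the $E$-local sphere in the relevant filtrations.) This is precisely the algebraic incarnation of the ``no exotic invertibles'' phenomenon underlying the analogous statement in $\Sp_n$.

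Granting this, the spectral sequence degenerates on the diagonal to a two-step filtration and yields a short exact sequence
\[
0\longrightarrow E_\infty^{1,1}\longrightarrow\Pic\bigl(\dcat^{per}(\Comod_{E_0E})\bigr)\longrightarrow E_\infty^{0,0}\longrightarrow 0,
\]
that is, $0\to\bbZ\to\Pic(\dcat^{per}(\Comod_{E_0E}))\to\bbZ/2\to 0$, with $\bbZ$ generated by $\unit[2]\cong E_2$ and the quotient $\bbZ/2$ by the image of $\unit[1]$. To finish I would observe that $\unit[1]$ itself has infinite order: its even powers $\unit[2k]\cong E_{2k}$ are pairwise non-isomorphic, equivalently $E_2$ is non-torsion in $\Pic(\Comod_{E_*E})$ because $\omega$ is non-torsion in $\Pic(\mathcal M_{\mathrm{fg}}^{\le n})\cong\bbZ$. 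Hence the extension is non-split --- it is multiplication by $2$ on $\bbZ$ --- and therefore $\Pic(\dcat^{per}(\Comod_{E_0E}))\cong\bbZ$, generated by the suspension of the unit, as claimed.
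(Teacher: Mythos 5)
Your proposal is correct and follows essentially the same route as the paper: descent yields the Picard spectral sequence with $E_2^{0,0}\cong\bbZ/2$, $E_2^{s,1}\cong H^s(\mathcal M_{\mathrm{fg}}^{\le n},\mathcal O^\times_{\mathcal M_{\mathrm{fg}}})$ and $E_2^{s,t}\cong\Ext^{s,t-1}_{E_*E}(E_*,E_*)$ for $t\ge 2$, the Hovey--Sadofsky vanishing line $s>n^2+n$ combined with $2(p-1)$-sparseness kills everything else in the $0$-stem, and the resulting extension $0\to\bbZ\to\Pic(\dcat^{per}(\Comod_{E_0E}))\to\bbZ/2\to 0$ is identified as multiplication by $2$. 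One small caveat: the non-splitness follows from the fact that $\unit[1]^{\otimes 2}\simeq\unit[2]$ \emph{generates} the kernel copy of $\bbZ$ (so a lift of the generator of $\bbZ/2$ squares to the generator of $\bbZ$), not merely from $\unit[1]$ having infinite order, since $\bbZ\times\bbZ/2$ also contains infinite-order lifts; you state the correct reason in the same breath, so this is only a matter of phrasing.
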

In the case $n = 1,p > 2$ this is a theorem of Barnes--Roitzheim \cite{BarnesRoitzheim2011Monoidality}, while for $p \gg n$ it is a consequence of work of Barthel--Schlank--Stapleton, see page 5 of \cite{BarthelSchlankStapleton2020Chromatic} (more precisely, it holds away from a finite set of primes). Both of these results, however, rely on the corresponding computation of the $E(n)$-local Picard group. The result we give is independent of this result (of course, the proof is similar in spirit). In \Cref{thm:pic_mrw} we also give a computation of the Picard group of the derived category of quasi-periodic comodules associated to the Hopf algebroid used by Miller--Ravenel; in particular, when $2p-2 \ge n^2$ and $p-1 \nmid n$, we show that this Picard group is cyclic of order $\mathbb{Z}/(2(p^n-1))$. 

Barthel--Schlank--Stapleton have also given a $K(n)$-local version of their algebraicity result, as follows. 
\begin{Thm*}[Barthel--Schlank--Stapleton \cite{BarthelSchlankStapleton2021Monochromatic}]
  For any non-principal ultrafilter $\mathcal{F}$ on the set of primes, there is an equivalence of symmetric monoidal stable $\infty$-categories
      \[
\prod_{\mathcal F}\Sp_{K(n)} \simeq \prod_{\mathcal F} \dcat^{per}(\Comod_{E_0E})^{\wedge}_{I_n}. 
    \]
\end{Thm*}
Here the algebraic model of the $K(n)$-local category appearing on the right-hand side is a certain Bousfield localization of $\dcat^{per}(\Comod_{E_0E})$, see \Cref{def:complete_categories} for a precise definition. 

In \Cref{def:complete_comodules} we therefore take up the study of the Picard group of the category $\dcat^{per}(\Comod_{E_0E})^{\wedge}_{I_n}$. We note that for technical reasons we fix a particular model for an even-periodic Landweber exact cohomology theory $E$ of height $n$, namely we use this to mean the Lubin--Tate theory associated to the Honda formal group law over $\bbF_{p^n}$. 

We first show that this category has a good theory of descent, and deduce a spectral sequence for computing the Picard spectrum of $\dcat^{per}(\Comod_{E_0E})^{\wedge}_{I_n}$, see \Cref{thm:picss_complete,cor:kn_ss}. Studying this spectral sequence, we prove the following in \Cref{thm:main_pic_local,thm:height1_kn,thm:pickn_ht2} (here $\mathbb{G}_n$ denotes the Morava stabilizer group). 
\begin{thmx}
  Suppose that $2p-2 \ge n^2$ and $(p-1) \nmid n$, then there is a short exact sequence 
  \[
0 \to H^1(\mathbb{G}_n,E_0^{\times}) \to \Pic(\dcat^{per}(\ComodE)^{\wedge}_{I_n}) \to \mathbb{Z}/2 \to 0.
\]
    When $n = 1$ and $p > 2$
    \[
\Pic(\dcat^{per}(\ComodE)^{\wedge}_{I_1}) \cong \mathbb{Z}_p \times \mathbb{Z}/2(p-1). 
    \]
        When $n = 2$ and $p > 3$, then 
    \[
\Pic(\dcat^{per}(\ComodE)^{\wedge}_{I_2}) \cong \mathbb{Z}_p^2 \times \mathbb{Z}/2(p^2-1). 
    \]
\end{thmx}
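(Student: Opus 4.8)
The plan is to run the descent spectral sequence for the Picard spectrum from \Cref{thm:picss_complete} (see also \Cref{cor:kn_ss}) and extract its contributions in total degree $0$. Writing $\cat D_{\mathrm{loc}} := \dcat^{per}(\Mod_{E_0})^{\wedge}_{I_n}$ for the category of $I_n$-complete quasi-periodic $E_0$-modules, this spectral sequence reads
\[
E_2^{s,t} = H^s(\mathbb{G}_n,\pi_t\pics(\cat D_{\mathrm{loc}})) \Longrightarrow \pi_{t-s}\,\pics\bigl(\dcat^{per}(\ComodE)^{\wedge}_{I_n}\bigr),
\]
where $\mathbb{G}_n$ acts continuously through the identification of the $I_n$-completed Hopf algebroid $(E_0,E_0E)$ with the Morava Hopf algebroid $(E_0,\Map^c(\mathbb{G}_n,E_0))$. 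The first step is to pin down the local input: since $E_0$ is a complete regular local ring, so $\Pic(E_0)=0$, and the quasi-periodicity trivialises $\Sigma^2\unit$ (it is carried to the free module $E_2\cong E_0$), one has $\pi_0\pics(\cat D_{\mathrm{loc}})=\Pic(\cat D_{\mathrm{loc}})\cong\mathbb{Z}/2$ with trivial $\mathbb{G}_n$-action, $\pi_1\pics(\cat D_{\mathrm{loc}})=E_0^{\times}$, and $\pi_t\pics(\cat D_{\mathrm{loc}})=E_{t-1}$ for $t\ge 2$ with the usual Morava-module action (hence $0$ for $t$ even and a rank-one twist of $E_0$ for $t$ odd).

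The second step is a sparseness argument producing the short exact sequence. Contributions to $\pi_0$ sit on the diagonal $E_\infty^{s,s}$. One has $E_2^{0,0}\cong\mathbb{Z}/2$ and $E_2^{1,1}\cong H^1(\mathbb{G}_n,E_0^{\times})$, and I would show $E_2^{s,s}=0$ for all $s\ge 2$: for $s$ even this holds because $E_{s-1}=0$, and for $s$ odd because the central subgroup $\mu_{p-1}\subset\mathbb{G}_n$ acts on $E_{s-1}$ by the scalar character raised to the power $(s-1)/2$, forcing $H^{\ast}(\mathbb{G}_n,E_{s-1})=0$ unless $2(p-1)\mid s-1$, while the smallest such $s\ge 2$ is $s=2p-1$, which exceeds the cohomological dimension $n^2$ of $\mathbb{G}_n$ exactly because $2p-2\ge n^2$. (Here $(p-1)\nmid n$ is used to know $\mathbb{G}_n$ has no $p$-torsion so that $\mathrm{cd}_p(\mathbb{G}_n)=n^2$, and $p\nmid n$, automatic under the hypothesis, absorbs the Galois factor $\Gal(\mathbb{F}_{p^n}/\mathbb{F}_p)$.) The same two facts rule out all differentials touching $E_2^{0,0}$ or $E_2^{1,1}$: the classes of $\unit$ and $\Sigma\unit$ are permanent cycles; incoming differentials to $E_r^{1,1}$ originate in cohomological filtration $1-r<0$ and so vanish; and outgoing differentials from $E_r^{1,1}$ land in $H^{1+r}(\mathbb{G}_n,E_{r-1})$, which is zero since either $r-1$ is odd, so $E_{r-1}=0$, or $r\ge 2p-1$, so $1+r>n^2$. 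Therefore $\pi_0$ is a two-step extension of $\mathbb{Z}/2$ by $H^1(\mathbb{G}_n,E_0^{\times})$; the quotient map is the $s=0$ edge map to $\Pic(\cat D_{\mathrm{loc}})$, i.e.\ the parity of the underlying $E_0$-module, and its kernel classifies invertibles whose underlying module is free, as one expects of a descent-theoretic $H^1$.

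For the two explicit cases the plan is to compute $H^1(\mathbb{G}_n,E_0^{\times})$ and resolve the extension. Splitting $E_0^{\times}\cong\mathbb{F}_{p^n}^{\times}\times(1+\mathfrak m)$ into prime-to-$p$ and pro-$p$ parts, the $p$-adic logarithm identifies $1+\mathfrak m$ with the maximal ideal $\mathfrak m\subset E_0$, and I would show $H^1(\mathbb{G}_n,1+\mathfrak m)\cong\mathbb{Z}_p^{n}$ (reducing modulo $\mathfrak m$ and recognising the answer via $\Hom_c(\mathbb{G}_n^{\mathrm{ab}},\mathbb{Z}_p)$) and $H^1(\mathbb{G}_n,\mathbb{F}_{p^n}^{\times})\cong\mathbb{Z}/(p^n-1)$. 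For $n=1$ this is elementary, as $\mathbb{G}_1=\mathbb{Z}_p^{\times}$ acts trivially on $E_0=\mathbb{Z}_p$ and $H^1=\Hom_c(\mathbb{Z}_p^{\times},\mathbb{Z}_p^{\times})\cong\mathbb{Z}_p\oplus\mathbb{Z}/(p-1)$; for $n=2$ and $p\ge 5$ it draws on the known structure and continuous cohomology of $\mathbb{G}_2$. In both cases $\Pic$ then fits in $0\to\mathbb{Z}_p^{n}\times\mathbb{Z}/(p^n-1)\to\Pic\to\mathbb{Z}/2\to 0$, whose class lies in $\Ext^1(\mathbb{Z}/2,\mathbb{Z}_p^{n}\times\mathbb{Z}/(p^n-1))\cong\mathbb{Z}/2$ (the $\mathbb{Z}_p^{n}$-part vanishes as $p$ is odd). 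To see the class is the non-trivial one, so that $\Pic\cong\mathbb{Z}_p^{n}\times\mathbb{Z}/2(p^n-1)$ rather than $\mathbb{Z}_p^{n}\times\mathbb{Z}/(p^n-1)\times\mathbb{Z}/2$, I would exhibit an invertible object of order exactly $2(p^n-1)$ mapping to the generator of $\mathbb{Z}/2$, namely a suitable character twist of $\Sigma\unit$, using that $2$ is not invertible modulo $p^n-1$.

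The main obstacle I expect is twofold. For the general statement it is packaging the sparseness of $H^{\ast}(\mathbb{G}_n,E_{\ast})$ together with the cohomological dimension of $\mathbb{G}_n$ so that the bound $2p-2\ge n^2$ is precisely what forces the spectral sequence to collapse in total degree $0$. For the height $2$ statement it is the genuine computation of $H^1(\mathbb{G}_2,E_0^{\times})$, which requires importing delicate facts about the cohomology of the Morava stabilizer group at primes $p\ge 5$, together with the extension problem, where enough explicit invertible comodules must be produced to pin the torsion down as $\mathbb{Z}/2(p^2-1)$ and to split off the $\mathbb{Z}_p^2$.
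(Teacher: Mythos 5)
Your proposal is correct and follows essentially the same route as the paper: the descent spectral sequence of \Cref{cor:kn_ss}, the identification $\Pic(\dcat^{per}(\Mod_{E_0})^{\wedge}_{I_n})\cong\mathbb{Z}/2$, the sparseness argument via the central $\mu_{p-1}\subset\mathbb{G}_n$ combined with the horizontal vanishing line at $s=n^2$ coming from $(p-1)\nmid n$, and the resolution of the extension by observing that $\Sigma\unit$ squares to the class $t_0\in H^1(\mathbb{G}_n,E_0^{\times})$ corresponding to $E_{*+2}$, with the $n=2$ case resting on the known computation of $H^1(\mathbb{G}_2,E_0^{\times})\cong\mathbb{Z}_p^2\times\mathbb{Z}/(p^2-1)$. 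The only cosmetic difference is that you write the $E_2$-page directly as continuous group cohomology of $\mathbb{G}_n$ with coefficients in $\pi_t\pics$ of the local category, whereas the paper derives the same groups from the cobar complex of $\Hom^c(\mathbb{G}_n^{\times k},E_*)$; these agree in the range that matters.
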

Up to extension, this identifies the Picard group of the $K(n)$-local category (see \cite{HopkinsMahowaldSadofsky1994Constructions}) with the Picard group of $\dcat^{per}(\Comod_{E_0E})^{\wedge}_{I_n}$. Once again, we note that the algebraicity results of Barthel--Schlank--Stapleton identify the two Picard groups away from a finite set of primes; our results are independent of Picard group computations in the $K(n)$-local category.\footnote{Aalthough in the cases $n = 1$ and $n = 2$, we do rely on computations in group cohomology that are also used in the $K(1)$ and $K(2)$-local computations.} 

  \subsection*{Acknowledgements}
  We thank Eric Peterson and Piotr Pstr{\c a}gowski for helpful conversations, and the referee for many helpful comments. The author is supported by grant number TMS2020TMT02 from the Trond Mohn Foundation.
%  \subsection*{Conventions}
 % An adjunction $F \colon \cat C \longleftrightarrows \dcat \colon G$ between symmetric monoidal presentably stable $\infty$-categories will be said to be symmetric monoidal if the left adjoint $F$ is symmetric monoidal, in which case $G$ obtains the structure of a lax-symmetric monoidal functor. 

\section{Quasi-periodic complexes of comodules}
Let $\cat A$ be a symmetric monoidal Grothendieck abelian category. Following work of Franke \cite{franke1996uniqueness}, Barnes and Roitzheim \cite{BarnesRoitzheim2011Monoidality} construct a category of quasi-periodic chain complexes. The construction of Barnes and Roitzheim relies on the choice of a self-equivalence $T \colon \cat A \to \cat A$ and a period $N$. We will only consider the special case where $T = -\otimes L$ is the tensor product with an invertible object $L$. 
\begin{Def}
	The category of quasi-periodic chain complexes $\Ch^{(L,N)}(\cat A)$ has objects the class of chain complexes in $\cat A$ which have a specified isomorphism $\alpha \colon L \otimes X \xrightarrow{\sim} X[N]$. A morphism is a chain map which commutes with the given isomorphisms. 
\end{Def}
\begin{Exa}
	Taking $L$ to be the tensor unit and $n = 0$, we see that $\Ch^{(\text{id},0)}(\cat A) \simeq \Ch(\cat A)$. 
\end{Exa}
\begin{Rem}\label{rem:periodification}
	By \cite[Lemma 1.2]{BarnesRoitzheim2011Monoidality} there is an adjunction 
	\[
P_{\cat A} \colon \Ch(\cat A) \leftrightarrows \Ch^{(L,N)}(\cat A) \colon U_{\cat A}
	\]
	where $U_{\cat A}$ is the forgetful functor, and the left adjoint $P_{\cat A}$ is the periodification functor defined on objects by 
	\[
P_{\cat A}(M) = \bigoplus_{k \in \mathbb{Z}} (M \otimes L^{\otimes k}[-kN]),
	\]
	i.e., it is the complex which has degree $t$ part $(P_{\cat A}(M))_t = \bigoplus_{k \in \mathbb{Z}} M_{t+kN} \otimes L^{\otimes k}$.
The differential on the summand $M_{t+kN} \otimes L^{\otimes k}$ is given by 
\[
(-1)^{kN}  d_{t+kN} \otimes \id_{L^{\otimes k}} \colon M_{t+kN} \otimes L^{\otimes k}    \to M_{t+kN-1} \otimes L^{\otimes k}.
\]

The forgetful functor also has a right adjoint, given by replacing the direct sum with the direct product in the definition of the periodification functor, see the remark before Proposition 1.3 of \cite{BarnesRoitzheim2011Monoidality}. 
\end{Rem}
\begin{Rem}\label{rem:qp_monoidal}
	From the definition we see that 
	\[
P_{\cat A}(M) \simeq P_{\cat A}(\unit) \otimes M. 
	\]
	where the tensor product is of chain complexes (we omit the forgetful functor). 
\end{Rem}
\begin{Rem}\label{rem:colimit_preserving_periodisation}
		Suppose that $F \colon \Ch(\cat A) \to \Ch(\cat B)$ is a symmetric monoidal colimit preserving functor with right adjoint $G$. Suppose that $L$ is invertible in $\Ch(\cat A)$, and note then that $F(L)$ is invertible in $\Ch(\cat B)$. It follows from the formula for periodification that if $M \in \Ch^{(L,N)}(\cat A)$, then $F(M) \in \Ch^{(F(L),N)}(\cat A)$, and moreover $ P_{\cat B} \circ F(R) \simeq F \circ P_{\cat A}(N)$ for $R \in \Ch(\cat A)$, i.e., the following diagram commutes:
		% https://q.uiver.app/?q=WzAsNCxbMCwwLCJcXENoKFxcY2F0IEEpIl0sWzEsMCwiXFxDaChcXGNhdCBCKSJdLFswLDEsIlxcQ2heeyhMLE4pfShcXGNhdCBBKSJdLFsxLDEsIlxcQ2heeyhGKEwpLE4pfShcXGNhdCBCKSJdLFswLDEsIkYiXSxbMiwzLCJGIiwyXSxbMCwyLCJQX3tcXGNhdCBBfSIsMl0sWzEsMywiUF97XFxjYXQgQn0iXV0=
\[\begin{tikzcd}
	{\Ch(\cat A)} & {\Ch(\cat B)} \\
	{\Ch^{(L,N)}(\cat A)} & {\Ch^{(F(L),N)}(\cat B)}
	\arrow["F", from=1-1, to=1-2]
	\arrow["F"', from=2-1, to=2-2]
	\arrow["{P_{\cat A}}"', from=1-1, to=2-1]
	\arrow["{P_{\cat B}}", from=1-2, to=2-2]
\end{tikzcd}\]
Taking right adjoints, it follows that for any $S \in \Ch^{(F(L),N)}(\cat B)$ we have $G \circ U_{\cat B}(S) \simeq U_{\cat A} \circ G(S)$. 
\end{Rem}
\begin{Rem}
We now specialize to the situation we are interested in. Let $(A,\Gamma)$ be a Hopf algebroid, always assumed to be an Adams Hopf algebroid \cite[Definition 1.4.3]{Hovey2004Homotopy}, i.e., $\Gamma$ is a filtered colimit of finitely-generated projective $A$-modules. This condition ensures that $\Gamma$ is a flat $A$-module via the left (or right) unit map\footnote{Such Hopf algebroids are called \emph{flat}.} (in fact, faithfully flat, as the unit maps are split by the counit map $\epsilon \colon \Gamma \to A$) and we let $\Comod_{\Gamma}$ be the symmetric monoidal abelian category of $(A,\Gamma)$-comodules, see \cite[Section 2]{Hovey2004Homotopy}. The condition that the Hopf algebroid is Adams ensures that the dualizable comodules generate $\Comod_{\Gamma}$ \cite[Proposition 1.4.4]{Hovey2004Homotopy}, and hence that $\Comod_{\Gamma}$ is Grothendieck abelian \cite[Proposition 1.4.1]{Hovey2004Homotopy}.

 Let $L$ be an invertible $\Gamma$-comodule, and consider the endofunctor of $\Comod_{\Gamma}$ given by tensoring with $L$. We then have the category $\Ch^{(L,N)}(\Comod_{\Gamma})$ of quasi-periodic complexes of $(A,\Gamma)$-comodules. Note that for the discrete Hopf algebroid $(A,A)$ we have $\Comod_{A} \simeq \Mod_A$, so this also includes as a special case the category $\Ch^{(L,N)}(\Mod_A)$ of  quasi-periodic complexes of $A$-modules. 
\end{Rem}
The following result is due to Barnes and Roitzheim \cite[Theorems 6.5 and 6.9]{BarnesRoitzheim2011Monoidality}
\begin{Thm}[Barnes--Roitzheim]\label{thm:barnes-roitzheim}
	Let $L$ be an invertible $\Gamma$-comodule, then there is a model structure on the category of quasi-periodic complexes of $\Gamma$-comodules $\Ch^{(L,N)}(\Comod_\Gamma)$, such that the resulting model category is cofibrantly-generated, proper, stable, and symmetric monoidal. Moreover, there is a symmetric monoidal Quillen adjunction 
	\[
P_{\cat A} \colon \Ch(\Comod_\Gamma) \leftrightarrows \Ch^{(L,N)}(\Comod_\Gamma) \colon U_{\cat A}
	\]
\end{Thm}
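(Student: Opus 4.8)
The plan is to obtain the model structure by transfer (right induction) along the periodification adjunction $P_{\cat A} \dashv U_{\cat A}$ of Remark~\ref{rem:periodification}, starting from the cofibrantly generated, proper, stable, symmetric monoidal model structure on $\Ch(\Comod_\Gamma)$ constructed by Hovey~\cite{Hovey2004Homotopy} for an Adams Hopf algebroid. Explicitly, one declares a map $f$ in $\Ch^{(L,N)}(\Comod_\Gamma)$ to be a weak equivalence (resp.\ a fibration) precisely when $U_{\cat A}(f)$ is one, with cofibrations determined by the left lifting property; the generating (trivial) cofibrations are then the images $P_{\cat A}(I_0)$ and $P_{\cat A}(J_0)$ of the generating cofibrations $I_0$ and generating trivial cofibrations $J_0$ of $\Ch(\Comod_\Gamma)$, so that the structure is automatically cofibrantly generated. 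To verify that this is a model structure I would invoke Kan's transfer theorem, which here reduces to checking: (i) bicompleteness of $\Ch^{(L,N)}(\Comod_\Gamma)$; (ii) that the small object argument applies to $P_{\cat A}(I_0)$ and $P_{\cat A}(J_0)$; and (iii) the acyclicity condition, that every relative $P_{\cat A}(J_0)$-cell complex is a weak equivalence.

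Conditions (i) and (ii) are routine. Since $\Comod_\Gamma$ is Grothendieck abelian, $\Ch(\Comod_\Gamma)$ is bicomplete and locally presentable; as $-\otimes L$ is exact, a limit or colimit of quasi-periodic complexes may be formed on underlying complexes and carries a canonical quasi-periodic structure, giving (i), and (ii) follows from local presentability together with the fact that $U_{\cat A}$ preserves filtered colimits. The crux is (iii), and I expect this to be the main obstacle. The economical way to handle it is to observe that $\Ch^{(L,N)}(\Comod_\Gamma)$ is equivalent, as a symmetric monoidal category, to the category $\Mod_R\bigl(\Ch(\Comod_\Gamma)\bigr)$ of modules over the commutative monoid $R := P_{\cat A}(\unit) = \bigoplus_{k \in \bbZ} L^{\otimes k}[-kN]$: an $R$-module structure on a complex $X$ unravels to maps $\alpha_k \colon L^{\otimes k} \otimes X \to X[kN]$ with $\alpha_0 = \id$ and a composition law forcing $\alpha_1$ to be an isomorphism, which is precisely a quasi-periodic structure, and under this identification $P_{\cat A} \dashv U_{\cat A}$ becomes the free/forgetful adjunction. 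One then appeals to the Schwede--Shipley theorem on modules over a (commutative) monoid; its only substantive input is the monoid axiom for $\Ch(\Comod_\Gamma)$, which is where the real work lies and which I would deduce from the generating trivial cofibrations using the exactness of $-\otimes L^{\otimes k}$ (and of filtered colimits) and the closure of acyclic monomorphisms under pushout and transfinite composition in a Grothendieck abelian category.

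Granting this, the remaining assertions follow formally. Right properness transfers because weak equivalences and fibrations are detected by the limit-preserving functor $U_{\cat A}$ and $\Ch(\Comod_\Gamma)$ is right proper; left properness holds because $R$ is flat — the direct sum of the invertible comodules $L^{\otimes k}$, with zero differential — so cofibrations of $\Ch^{(L,N)}(\Comod_\Gamma)$, being retracts of $P_{\cat A}(I_0)$-cell complexes, are monomorphisms of underlying complexes, and the usual long-exact-sequence argument valid in $\Ch$ of a Grothendieck abelian category applies. The model category is stable because the shift $[1]$ restricts to a Quillen auto-equivalence of $\Ch^{(L,N)}(\Comod_\Gamma)$ with inverse $[-1]$, so suspension is invertible on $\Ho\bigl(\Ch^{(L,N)}(\Comod_\Gamma)\bigr)$. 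For the symmetric monoidal structure of Remark~\ref{rem:qp_monoidal}, identified with $\otimes_R$: the monoid $R = P_{\cat A}(\unit)$ is cofibrant — a complex with zero differential whose terms are the dualizable comodules $L^{\otimes k}$ — so by Schwede--Shipley $\Mod_R\bigl(\Ch(\Comod_\Gamma)\bigr)$ is a monoidal model category and the pushout-product and unit axioms for $\Ch^{(L,N)}(\Comod_\Gamma)$ reduce to those for $\Ch(\Comod_\Gamma)$; and $P_{\cat A} \dashv U_{\cat A}$ is a symmetric monoidal Quillen adjunction because $P_{\cat A} \cong R \otimes (-)$ is the strong symmetric monoidal free-module functor and is left Quillen.
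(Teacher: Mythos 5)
The paper does not actually prove this statement: it is quoted verbatim from Barnes--Roitzheim \cite[Theorems 6.5 and 6.9]{BarnesRoitzheim2011Monoidality}, so the only thing to compare your argument against is their proof and the paper's gloss on it in \Cref{rem:model_structures}. Your overall strategy --- identify $\Ch^{(L,N)}(\Comod_\Gamma)$ with modules over the commutative monoid $P_{\cat A}(\unit)=\bigoplus_k L^{\otimes k}[-kN]$ and transfer a model structure along $P_{\cat A}\dashv U_{\cat A}$ --- is essentially the strategy Barnes and Roitzheim use (the module identification is their Proposition 2.3, reproduced as \Cref{prop:br_module} here), and your observations that $P_{\cat A}(\unit)$ is cofibrant (a coproduct of shifts of dualizable comodules) and that stability follows from the shift being a Quillen auto-equivalence are fine.

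The genuine gap is your treatment of \emph{which} model structure on $\Ch(\Comod_\Gamma)$ you are transferring. Hovey's construction in \cite[Section 2]{Hovey2004Homotopy} is the \emph{relative projective} model structure, whose weak equivalences are not the quasi-isomorphisms for a general Adams Hopf algebroid; the structure the theorem needs (so that \Cref{def:franke} really produces the derived category) is the \emph{quasi-projective} one, which per \Cref{rem:model_structures} is a further left Bousfield localization. Your proposal conflates the two: you transfer ``the'' Hovey structure and then need the monoid axiom for the quasi-projective structure, which you correctly identify as ``where the real work lies'' but do not supply --- and for a localized structure the trivial cofibrations are cofibrations that are quasi-isomorphisms, not merely acyclic monomorphisms, so the closure argument you sketch does not immediately apply. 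The route that actually works (and is Barnes--Roitzheim's) is two-step: first transfer the relative projective structure, where acyclicity and the monoid axiom are tractable because cofibrant objects are built from dualizable comodules, and then left Bousfield localize at the quasi-isomorphisms (\cite[Corollary 6.4]{BarnesRoitzheim2011Monoidality}), checking separately that monoidality and properness survive the localization. Relatedly, your claim that ``right properness transfers'' is too quick once a left Bousfield localization is involved; one must argue directly (e.g.\ via the long exact sequence, since fibrations are in particular degreewise split epimorphisms) rather than by formal transfer.
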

\begin{Not}\label{not:qp-tensor}
Following Barnes and Roitzheim, we denote the tensor product in $\Ch^{(L,N)}(\Comod_\Gamma)$ by $\otimes_{\mathcal P}$. 
\end{Not}
\begin{Rem}\label{rem:model_structures}
  The model structure used by Barnes and Roitzheim is called the quasi-projective model structure, because the weak equivalences are exactly the quasi-isomorphisms. This is a Bousfield localization of the relative projective model structure studied by Hovey in \cite[Section 2]{Hovey2004Homotopy}, see \cite[Corollary 6.4]{BarnesRoitzheim2011Monoidality}. We  note that in the case of the discrete Hopf algebroid $(A,A)$ the model structures are equivalent.  %In general, the quasi-projective model structure differs from the homotopy model structure used by Hovey in \cite[Section 5]{Hovey2004Homotopy}, but in our main cases of interest \cite[Proposition 4.7 and Theorem 4.11]{BarthelHeard2018Algebraic} imply that there is no difference. 
\end{Rem}
\begin{Def}
	We let $\cat D(\Comod_{\Gamma})$ and $\cat D^{(L,N)}(\Comod_{\Gamma})$ denote the symmetric monoidal stable $\infty$-categories underlying these model categories (see \cite[Section 1.3.4]{lurie-higher-algebra}). 
\end{Def}
\begin{Rem}
We note that $\Comod_{\Gamma}$ is always locally presentable for an Adams Hopf algebroid as it is Grothendieck abelian. It follows that $\Ch(\Comod_{\Gamma})$ is Grothendieck abelian, and hence locally presentable as well. The adjunction mentioned in the last paragraph of \Cref{rem:periodification} can be used to show that $\Ch^{(L,N)}(\Comod_\Gamma)$ is locally presentable by applying \cite[Lemma 11.2]{Hashimoto2009Equivariant}. Alternatively, as noted after Corollary 2.4 of \cite{BarnesRoitzheim2011Monoidality}, $\Ch^{(L,N)}(\Comod_\Gamma)$ is the category of modules over the monad of the displayed adjunction in \Cref{rem:periodification}, and hence is locally presentable by \cite[Theorem 5.5.9]{MR1313497}. Along with \Cref{thm:barnes-roitzheim} we deduce that the Barnes--Roitzheim model structure is combinatorial. It then follows from \cite[Proposition 1.3.4.22]{lurie-higher-algebra} that the above $\infty$-categories are presentable. 
\end{Rem}
\begin{Rem}\label{rem:pu_adjunction}
	By \cite[Proposition 1.5.1]{Hinich2016DwyerKan} we then obtain a symmetric monoidal adjunction of stable $\infty$-categories 
		\[
P_{\Gamma} \colon \cat{D}(\Comod_{\Gamma}) \leftrightarrows \cat{D}^{(L,N)}(\Comod_{\Gamma}) \colon U_{\Gamma}
	\]
The adjunction therefore preserves commutative algebra objects; in particular, we have that $P_{\Gamma}(A) \in \CAlg(\cat D^{(L,N)}(\Comod_{\Gamma}))$. 

	We show that the monoidal Barr--Beck theorem (see \Cref{sec:monoida_barr}) holds for this adjunction, recovering \cite[Proposition 2.3]{BarnesRoitzheim2011Monoidality} in this case. 
\end{Rem}
\begin{Prop}[Barnes--Roitzheim]\label{prop:br_module}
	There is an equivalence of symmetric monoidal stable $\infty$-categories
\[
\cat D^{(L,N)}(\Comod_{\Gamma}) \simeq \Mod_{\cat{D}(\Comod_{\Gamma})}(P_{\Gamma}(A)). 
\]
\end{Prop}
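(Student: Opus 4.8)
The plan is to deduce this directly from the monoidal Barr--Beck theorem (see \Cref{sec:monoida_barr}) applied to the symmetric monoidal adjunction $P_{\Gamma} \colon \cat D(\Comod_{\Gamma}) \leftrightarrows \cat D^{(L,N)}(\Comod_{\Gamma}) \colon U_{\Gamma}$ of \Cref{rem:pu_adjunction}. What has to be checked is that the right adjoint $U_{\Gamma}$ satisfies the hypotheses of the Barr--Beck--Lurie monadicity theorem --- that it is conservative and preserves geometric realizations of $U_{\Gamma}$-split simplicial objects --- together with the observation that the resulting monad on $\cat D(\Comod_{\Gamma})$ is the one given by tensoring with $U_{\Gamma}P_{\Gamma}(\unit)$.

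First I would dispose of the colimit condition: $U_{\Gamma}$ is not only a right adjoint to $P_{\Gamma}$ but is itself a left adjoint. Indeed, by \Cref{rem:periodification} the forgetful functor on chain complexes also has a right adjoint (the ``product periodification''), and passing to the underlying $\infty$-categories this exhibits $U_{\Gamma}$ as a left adjoint as well. Hence $U_{\Gamma}$ preserves all small colimits and limits, so in particular it preserves every geometric realization. Since $\cat D^{(L,N)}(\Comod_{\Gamma})$ is the underlying $\infty$-category of a combinatorial stable model category it is presentable and stable, so the ambient hypotheses of the theorem hold.

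Next, conservativity: by \Cref{rem:model_structures} the model structure on $\Ch^{(L,N)}(\Comod_A)$ is the quasi-projective one, whose weak equivalences are exactly the quasi-isomorphisms; these are detected after forgetting to underlying chain complexes, \ie by $U_{\cat A}$. Hence a morphism of $\cat D^{(L,N)}(\Comod_{\Gamma})$ is an equivalence precisely when its image under $U_{\Gamma}$ is, so $U_{\Gamma}$ is conservative. The Barr--Beck--Lurie theorem then identifies $\cat D^{(L,N)}(\Comod_{\Gamma})$ with the $\infty$-category of modules over the monad $T = U_{\Gamma}P_{\Gamma}$ on $\cat D(\Comod_{\Gamma})$; and by the projection formula $P_{\Gamma}(M) \simeq P_{\Gamma}(\unit)\otimes M$ of \Cref{rem:qp_monoidal} this monad is equivalent to $-\otimes R$ where $R = U_{\Gamma}P_{\Gamma}(\unit)$ carries the commutative algebra structure of \Cref{rem:pu_adjunction}, so that the module category is $\Mod_{\cat D(\Comod_{\Gamma})}(R)$. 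Finally, because $P_{\Gamma}$ is symmetric monoidal, the monoidal refinement of Barr--Beck upgrades this to an equivalence of symmetric monoidal stable $\infty$-categories, and $R$ is precisely $P_{\Gamma}(A)$ in the notation of \Cref{rem:pu_adjunction} (omitting the forgetful functor). The one step that requires input beyond the abstract adjunction data is the verification of conservativity, where the description of the weak equivalences of the quasi-projective model structure is genuinely used; everything else is formal.
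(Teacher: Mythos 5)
Your proposal is correct and follows essentially the same route as the paper: both apply the monoidal Barr--Beck theorem of the appendix to the $(P_{\Gamma},U_{\Gamma})$ adjunction, checking conservativity of $U_{\Gamma}$ via the weak equivalences of the quasi-projective model structure, colimit-preservation via the extra right adjoint of the forgetful functor, and the projection formula via the identity $P_{\Gamma}(M)\simeq P_{\Gamma}(\unit)\otimes M$. The only cosmetic difference is that you unfold the packaged statement into Barr--Beck--Lurie monadicity plus an identification of the monad with $-\otimes U_{\Gamma}P_{\Gamma}(\unit)$, which is exactly how that theorem is proved.
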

\begin{proof}
		We must verify the conditions of \Cref{thm:mnn_bb} for the $(P_{\Gamma},U_{\Gamma})$ adjunction of \Cref{rem:pu_adjunction}. $U_{\Gamma}$ is conservative by construction, and commutes with colimits as it has a right adjoint (\Cref{rem:periodification}). The proof is completed if we can show that the projection formula holds, i.e., that 
		\[
U_{\Gamma}X \otimes Y \simeq U_{\Gamma}(X \otimes P_{\Gamma}(Y))
		\] 
for $X \in \dcat^{(L,N)}(\Comod_{\Gamma})$ and $Y \in \dcat(\Comod_{\Gamma})$. In fact, this holds for purely formal reasons, see \cite[Proposition 2.15]{BalmerDellAmbrogioSers2016GrothendieckNeeman}.
\end{proof}
\begin{Rem}
To be explicit: we have a commutative diagram as follows, where the diagonal arrows correspond to restriction and extension of scalars along the map $A \to P_{\Gamma}(A)$ in $\dcat(\Comod_{\Gamma})$:
% https://q.uiver.app/?q=WzAsMyxbMSwwLCJcXGRjYXReeyhMLE4pfShcXENvbW9kX3tcXEdhbW1hfSkiXSxbMCwwLCJcXGRjYXQoXFxDb21vZF97XFxHYW1tYX0pIl0sWzEsMSwiXFxNb2Rfe1AoQSl9KFxcZGNhdChcXENvbW9kX3tcXEdhbW1hfSkpIl0sWzEsMCwiUCIsMCx7Im9mZnNldCI6LTF9XSxbMCwxLCJVIiwwLHsib2Zmc2V0IjotMX1dLFsyLDEsIiIsMCx7Im9mZnNldCI6MX1dLFswLDIsIlxcc2ltIiwyLHsib2Zmc2V0IjoxfV0sWzEsMiwiIiwzLHsib2Zmc2V0IjoxfV0sWzIsMCwiXFxzaW0iLDIseyJvZmZzZXQiOjF9XV0=
\[\begin{tikzcd}
	{\dcat(\Comod_{\Gamma})} & {\dcat^{(L,N)}(\Comod_{\Gamma})} \\
	& {\Mod_{\dcat(\Comod_{\Gamma})}(P_{\Gamma}(A))}
	\arrow["P_{\Gamma}", shift left=1, from=1-1, to=1-2]
	\arrow["U_{\Gamma}", shift left=1, from=1-2, to=1-1]
	\arrow[shift right=1, from=2-2, to=1-1]
	\arrow["\sim"', shift right=1, from=1-2, to=2-2]
	\arrow[shift right=1, from=1-1, to=2-2]
	\arrow["\sim"', shift right=1, from=2-2, to=1-2]
\end{tikzcd}\]
\end{Rem}
\begin{Rem}
	As the forgetful functor from $\Gamma$-comodules to $A$-modules is symmetric monoidal, $L$ is also invertible in $\cat D(\Mod_A)$. We can therefore also form $\cat D^{(L,N)}(\Mod_A)$. Then, we have an equivalence of symmetric monoidal $\infty$-categories
	\[
\cat D^{(L,N)}(\Mod_A) \simeq \Mod_{\cat D(\Mod_A)}(P_{A}(A)). 
	\] 
\end{Rem}

\section{Descendable Hopf algebroids}
In this section we introduce the notion of a \emph{descendable} Hopf algebroid (\Cref{def:descendable_ha}). As the name suggests, we show (\Cref{thm:descent_hopf})  that if a Hopf algebroid is descendable, then $\dcat(\Comod_{\Gamma})$ (or more specifically, its associated category of quasi-periodic comodules) has a good theory of descent. 
\begin{Rem}
We recall that the forgetful functor $\epsilon_* \colon \cat D(\Comod_{\Gamma}) \to \cat D(\Mod_A)$ has a right adjoint, the extended (or cofree) comodule functor, defined by $\epsilon^*(M) = \Gamma \otimes_A M$, with structure map $\Delta \otimes M$, where $\Delta \colon \Gamma \to \Gamma \otimes \Gamma$ is the coproduct of the Hopf algebroid. We first show this extends to the derived category.  
\end{Rem}

\begin{Lem}\label{lem:free_forget}
	There is a symmetric monoidal adjunction  
\[
\epsilon_* \colon \dcat(\Comod_{\Gamma}) \leftrightarrows \dcat(\Mod_{A}) \colon \epsilon^*.
\]
The adjunction has the following properties:
\begin{enumerate}
	\item $\epsilon^*$ is conservative. 	
	\item $\epsilon^*$ commutes with colimits. 
	\item The adjunction satisfies the projection formula: the natural map
  \[
\epsilon^*(X) \otimes Y \to \epsilon^*(X \otimes \epsilon_*(Y))
  \]
  is an equivalence for all $X \in \dcat(\Mod_{A})$ and $Y \in \dcat(\Comod_{\Gamma}) $. 
\end{enumerate}
\end{Lem}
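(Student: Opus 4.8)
The plan is to establish the adjunction $\epsilon_* \dashv \epsilon^*$ at the level of $\infty$-categories by deriving the classical adjunction between the forgetful functor and the cofree comodule functor on $\Comod_\Gamma$, and then verify the three properties one at a time. First, I would recall that on the abelian categories we have $\epsilon_* \colon \Comod_\Gamma \leftrightarrows \Mod_A \colon \epsilon^*$ with $\epsilon^*(M) = \Gamma \otimes_A M$, and that $\epsilon_*$ is exact (indeed symmetric monoidal) while $\epsilon^*$ is exact because $\Gamma$ is flat over $A$ (the Adams Hopf algebroid hypothesis). Hence both functors pass to the derived $\infty$-categories; using that $\epsilon_*$ is a left Quillen functor for the relative projective model structure of Hovey (see \Cref{rem:model_structures}) and preserves the monoidal structure, one obtains a symmetric monoidal adjunction $\epsilon_* \colon \dcat(\Comod_\Gamma) \leftrightarrows \dcat(\Mod_A) \colon \epsilon^*$, with $\epsilon^*$ computed by $\Gamma \otimes_A (-)$ since $\Gamma$ is flat and this formula already derives correctly.

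For property (1), conservativity of $\epsilon^*$, I would argue that a map $f$ in $\dcat(\Comod_\Gamma)$ with $\epsilon^*(f)$ an equivalence is already an equivalence after applying $\epsilon_* \epsilon^*$, but more directly: the counit $\epsilon_* \epsilon^* \to \id_{\Mod_A}$ is split (the comodule structure map provides a section of the unit $M \to \Gamma \otimes_A M$ after applying $\epsilon_*$), so $\epsilon_*$ is conservative; combined with the fact that $\epsilon^*$ reflects equivalences because $\epsilon_* \epsilon^*(f) \simeq \epsilon^*\epsilon_*$-type comparisons — actually the cleanest route is to note that on homology $H_*(\epsilon^* X) = \Gamma \otimes_A H_*(X)$ as $\Gamma$ is flat, and $\Gamma \otimes_A (-)$ is faithful on $A$-modules (it is split by counit $\epsilon \colon \Gamma \to A$), so $\epsilon^* X \simeq 0 \Rightarrow H_*(X) = 0 \Rightarrow X \simeq 0$. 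For property (2), $\epsilon^*$ commutes with colimits because it is given by the formula $\Gamma \otimes_A (-)$ which is a left adjoint (to the functor $N \mapsto N$ regarded with trivial-ish structure, or simply because $- \otimes_A \Gamma$ is a colimit-preserving functor of the underlying module and the comodule structure is determined levelwise); alternatively, $\epsilon^*$ has its own right adjoint built from $\Hom_A(\Gamma, -)$, using that $\Gamma$ is a filtered colimit of finitely generated projectives.

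For property (3), the projection formula $\epsilon_*(X) \otimes_A Y \simeq \epsilon_*(X \otimes_{\Comod_\Gamma} \epsilon^* Y)$, I would invoke the formal criterion: since $\epsilon_*$ is symmetric monoidal, its right adjoint $\epsilon^*$ is lax monoidal, and the projection formula holds automatically by \cite[Proposition 2.15]{BalmerDellAmbrogioSers2016GrothendieckNeeman} (exactly as was done in the proof of \Cref{prop:br_module}), provided one checks that the relevant objects are dualizable or that the adjunction is suitably nice — but in fact for this type of forgetful/cofree adjunction the projection formula is a completely formal consequence of $\epsilon_*$ being monoidal together with $\epsilon^*$ being computed by the Hopf algebroid coproduct, since $\epsilon^* Y = \Gamma \otimes_A Y$ and $\epsilon_*(X \otimes \epsilon^* Y) = \epsilon_* X \otimes_A \Gamma \otimes_A Y$ with the $\Gamma$ being absorbed by the counit. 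The main obstacle I anticipate is not any single property in isolation but the bookkeeping needed to verify that the classical adjunction really lifts to a symmetric monoidal adjunction of stable $\infty$-categories with $\epsilon^*$ still given by the naive cofree formula after deriving — i.e., confirming that flatness of $\Gamma$ makes $\Gamma \otimes_A (-)$ already derived, and that Hinich's comparison (\cite[Proposition 1.5.1]{Hinich2016DwyerKan}, as used in \Cref{rem:pu_adjunction}) applies to promote the Quillen adjunction to the monoidal $\infty$-categorical one. Once that lift is in hand, all three listed properties reduce to the classical, flat, split-by-counit observations sketched above.
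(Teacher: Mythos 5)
Your setup of the adjunction (derive Hovey's Quillen adjunction for the projective model structure, check it survives the Bousfield localization, pass to $\infty$-categories via Hinich) and your arguments for (a) and (b) match the paper's: conservativity comes from faithful flatness of $\Gamma$ over $A$ (split by the counit), and colimit-preservation comes from the fact that colimits of comodules are computed on underlying $A$-modules together with flatness of $\Gamma$.

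There is, however, a genuine error in your treatment of (c). In this lemma $\epsilon_*$ (the forgetful functor) is the \emph{left} adjoint and $\epsilon^*=\Gamma\otimes_A-$ (the cofree functor) is the \emph{right} adjoint, so the projection formula required — and the one needed to feed this adjunction into \Cref{thm:mnn_bb} and \Cref{prop:bs_module}, which is exactly how the lemma is used in \Cref{prop:free_forget_descendable} — is
\[
\epsilon^*(X)\otimes Y \simeq \epsilon^*(X\otimes \epsilon_*(Y)), \qquad X\in\dcat(\Mod_A),\ Y\in\dcat(\Comod_\Gamma),
\]
with the right adjoint on the outside. The formula you wrote, $\epsilon_*(X)\otimes_A Y\simeq \epsilon_*(X\otimes\epsilon^*Y)$, is false: since $\epsilon_*$ is strong monoidal, its right-hand side is $\epsilon_*(X)\otimes_A\Gamma\otimes_A Y$, which differs from the left-hand side by a factor of $\Gamma$; the counit $\epsilon\colon\Gamma\to A$ is only a retraction, not an isomorphism, so it does not ``absorb'' that factor. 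The paper's actual argument for the correct formula is: both $\epsilon^*$ and $\epsilon_*$ preserve colimits, so one reduces to the generator $X=A$, where the statement becomes $\Gamma\otimes Y\simeq\Gamma\otimes\epsilon_*(Y)$ — the identification of the ``diagonal'' comodule structure on $\Gamma\otimes_A Y$ with the extended (cofree) structure on $\Gamma\otimes_A\epsilon_*(Y)$, which is \cite[Lemma 1.1.5]{Hovey2004Homotopy} and needs no deriving because $\Gamma$ is flat. This is the one nontrivial input your proposal is missing. (Your fallback appeal to the formal criterion of \cite[Proposition 2.15]{BalmerDellAmbrogioSers2016GrothendieckNeeman}, as in \Cref{prop:br_module}, could be made to work since $\epsilon^*$ preserves colimits and hence admits a right adjoint, but you would still have to state the projection formula with the adjoints in the correct roles.)
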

\begin{proof}
For the projective model structure, the existence of the adjunction is a special case of \cite[Proposition 2.2.1]{Hovey2004Homotopy}. We need to show that this is preserved by Bousfield localization at the homology isomorphisms (see \Cref{rem:model_structures}). $\epsilon_*$ clearly preserves cofibrations in the quasi-projective model structure, as they are the same as in the relative projective model structure. Moreover, as $\epsilon_*$ is conservative, it preserves quasi-isomorphisms, and so $\epsilon_*$ is a left Quillen functor.

The adjunction passes to underlying $\infty$-categories by \cite[Proposition 1.5.1]{Hinich2016DwyerKan}. We now verify the stated properties of the adjunction. For $(a)$, let $f \colon M \to N$ be a morphism in $\cat D(\Mod_A)$, with $\epsilon^*f = \Gamma \otimes f$ a quasi-isomorphism. Because $\Gamma$ is faithfully-flat, $f$ is a quasi-isomorphism as well, so that $\epsilon^*$ is conservative.  To see that $\epsilon^*$ commutes with colimits note that the $A$-module colimit of a diagram of comodules acquires the structure of a comodules and is in fact the colimit in $\Gamma$-comodule; the claim then follows because $\Gamma \otimes -$ commutes with colimits of $A$-modules. Finally, the projection formula can be proved in the usual way: using that both $\epsilon^*$ and $\epsilon_*$ preserve colimits, one reduces the claim to the case where $X = A$. This is then the claim that $\Gamma \otimes Y \simeq \Gamma \otimes (\epsilon_*Y)$ which holds by \cite[Lemma 1.1.5]{Hovey2004Homotopy} (note that $\Gamma$ is flat, so we do not need to derive the tensor product here). 
	\end{proof}
	\begin{Rem}
		Because $\epsilon_*$ is symmetric monoidal, $\epsilon^*$ is lax symmetric monoidal. We therefore obtain an adjunction at the level of commutative algebra objects \cite[Proposition 5.22]{MathewNaumannNoel2017Nilpotence}
		\[
\epsilon_* \colon \CAlg(\dcat(\Comod_{\Gamma})) \leftrightarrows \CAlg(\dcat(\Mod_A)) \colon \epsilon^*
		\]
		In particular, $\Gamma \simeq \epsilon^*(A)$ is a commutative algebra object in $\dcat(\Comod_{\Gamma})$. 
	\end{Rem}

\begin{Prop}\label{prop:free_forget_descendable}
		There is a symmetric monoidal adjunction, 
\[
\rho_* \colon \dcat^{(L,N)}(\Comod_{\Gamma}) \leftrightarrows \dcat^{(L,N)}(\Mod_{A}) \colon \rho^*
\]
compatible with the periodification functors, i.e., the diagram
% https://q.uiver.app/?q=WzAsNCxbMCwxLCJcXGRjYXReeyhMLE4pfShcXENvbW9kX3tcXEdhbW1hfSkiXSxbMSwxLCJcXGRjYXReeyhMLE4pfShcXE1vZF97QX0pIl0sWzAsMCwiXFxkY2F0KFxcQ29tb2Rfe1xcR2FtbWF9KSJdLFsxLDAsIlxcZGNhdChcXE1vZF97QX0pIl0sWzAsMSwiXFxlcHNpbG9uJ18qIiwwLHsib2Zmc2V0IjotMX1dLFsxLDAsIlxcZXBzaWxvbideKiIsMCx7Im9mZnNldCI6LTF9XSxbMiwzLCJcXGVwc2lsb25fKiIsMCx7Im9mZnNldCI6LTF9XSxbMywyLCJcXGVwc2lsb25eKiIsMCx7Im9mZnNldCI6LTF9XSxbMCwyLCJVIiwyLHsib2Zmc2V0IjoxfV0sWzIsMCwiUCIsMix7Im9mZnNldCI6MX1dLFszLDEsIlAiLDIseyJvZmZzZXQiOjF9XSxbMSwzLCJVIiwyLHsib2Zmc2V0IjoxfV1d
\[\begin{tikzcd}
	{\dcat(\Comod_{\Gamma})} & {\dcat(\Mod_{A})} \\
	{\dcat^{(L,N)}(\Comod_{\Gamma})} & {\dcat^{(L,N)}(\Mod_{A})}
	\arrow["{\rho_*}", shift left=1, from=2-1, to=2-2]
	\arrow["{\rho^*}", shift left=1, from=2-2, to=2-1]
	\arrow["{\epsilon_*}", shift left=1, from=1-1, to=1-2]
	\arrow["{\epsilon^*}", shift left=1, from=1-2, to=1-1]
	\arrow["U_{\Gamma}"', shift right=1, from=2-1, to=1-1]
	\arrow["P_{\Gamma}"', shift right=1, from=1-1, to=2-1]
	\arrow["P_A"', shift right=1, from=1-2, to=2-2]
	\arrow["U_A"', shift right=1, from=2-2, to=1-2]
\end{tikzcd}\]
commutes. 

The adjunction has the following properties:
\begin{enumerate}
	\item $\rho^*$ is conservative. 	
	\item $\rho^*$ commutes with colimits. 
	\item The adjunction satisfies the projection formula. 
	\end{enumerate}
	Moreover, there is an equivalence 
	\[
\dcat^{(L,N)}(\Mod_{A}) \simeq \Mod_{\dcat^{(L,N)}(\Comod_{\Gamma})}(P_{\Gamma}(\Gamma)). 
	\]
\end{Prop}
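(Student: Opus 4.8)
The plan is to transport the adjunction $\epsilon_*\dashv\epsilon^*$ of \Cref{lem:free_forget} to the quasi-periodic setting by means of \Cref{rem:colimit_preserving_periodisation}, and then to extract the module description from the monoidal Barr--Beck theorem exactly as in the proof of \Cref{prop:br_module}. At the level of chain complexes the forgetful functor $\epsilon_*\colon\Ch(\Comod_{\Gamma})\to\Ch(\Mod_A)$ is symmetric monoidal and colimit-preserving (colimits of complexes of comodules are computed on underlying complexes of modules), with right adjoint the levelwise cofree comodule functor $\Gamma\otimes_A(-)$. Since $\epsilon_*(L)=L$, \Cref{rem:colimit_preserving_periodisation} applies and produces a symmetric monoidal functor $\rho_*\colon\Ch^{(L,N)}(\Comod_{\Gamma})\to\Ch^{(L,N)}(\Mod_A)$ --- namely $\epsilon_*$ restricted to quasi-periodic complexes --- sitting in a commuting square with the periodification functors, together with a right adjoint $\rho^*$ lifting the cofree comodule functor and satisfying $\epsilon^*\circ U_A\simeq U_{\Gamma}\circ\rho^*$. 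Exactly as in the proof of \Cref{lem:free_forget}, $\rho_*$ preserves cofibrations (which coincide with those of the relative projective structure) and quasi-isomorphisms, hence is left Quillen for the quasi-projective model structures; since $\Gamma$ is flat, $\rho^*$ preserves quasi-isomorphisms as well, so passing to underlying $\infty$-categories via \cite[Proposition 1.5.1]{Hinich2016DwyerKan} gives the symmetric monoidal adjunction $\rho_*\dashv\rho^*$, and the displayed diagram commutes: the left-adjoint square $P_A\circ\epsilon_*\simeq\rho_*\circ P_{\Gamma}$ by \Cref{rem:colimit_preserving_periodisation}, the right-adjoint square $\epsilon^*\circ U_A\simeq U_{\Gamma}\circ\rho^*$ by taking right adjoints.

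Properties (a)--(c) I would deduce from the corresponding statements in \Cref{lem:free_forget} by means of the identity $U_{\Gamma}\circ\rho^*\simeq\epsilon^*\circ U_A$. For (a), since $U_{\Gamma}$, $\epsilon^*$, and $U_A$ are all conservative, so is $\rho^*$. For (b), $\epsilon^*$ commutes with colimits and $U_A$ has a right adjoint (\Cref{rem:periodification}), so $U_{\Gamma}\circ\rho^*$ preserves colimits; as $U_{\Gamma}$ is conservative and colimit-preserving it reflects colimits, whence $\rho^*$ preserves them. For (c), since both $\rho_*$ and $\rho^*$ preserve colimits, the projection formula reduces to the case of the unit $P_A(A)$, where it becomes the identity $P_{\Gamma}(\Gamma)\otimes Y\simeq\rho^*\rho_*(Y)$; this is the quasi-periodic analogue of the projection formula for $(\epsilon_*,\epsilon^*)$ and may be checked directly as there (or deduced formally as in the proof of \Cref{prop:br_module}).

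Finally, the monoidal Barr--Beck theorem (\Cref{thm:mnn_bb}), whose hypotheses are precisely (a)--(c) together with the fact that $\rho_*$ is symmetric monoidal, yields $\dcat^{(L,N)}(\Mod_A)\simeq\Mod_{\dcat^{(L,N)}(\Comod_{\Gamma})}(\rho^*\rho_*(\unit))$. As $\rho_*(\unit)=P_A(A)$, the unit of $\dcat^{(L,N)}(\Mod_A)$, it remains to identify $\rho^*(P_A(A))$ with $P_{\Gamma}(\Gamma)$ --- a commutative algebra object, since $P_{\Gamma}$ is symmetric monoidal and $\Gamma=\epsilon^*(A)$ is one. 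This follows on applying $U_{\Gamma}$ and combining $U_{\Gamma}\circ\rho^*\simeq\epsilon^*\circ U_A$ with the compatibility of $\epsilon^*$ with periodification (a consequence of its preserving colimits together with the projection formula, which gives $\epsilon^*(M\otimes_A L^{\otimes k})\simeq\epsilon^*(M)\otimes_{\Gamma} L^{\otimes k}$), the quasi-periodic structures being matched by inspection. I expect no single deep obstacle here; the points requiring care are the verification that $\rho_*$ is genuinely left Quillen for the quasi-projective model structure on the target and the identification of the algebra $\rho^*\rho_*(\unit)$ with $P_{\Gamma}(\Gamma)$ together with its structure, both of which run parallel to \Cref{lem:free_forget} and \Cref{prop:br_module}.
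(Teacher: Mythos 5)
Your argument is correct, but it takes a different route from the paper's. You build the adjunction $\rho_*\dashv\rho^*$ from the ground up: restrict $\epsilon_*$ to quasi-periodic chain complexes via \Cref{rem:colimit_preserving_periodisation}, check it is left Quillen for the Barnes--Roitzheim quasi-projective model structures, pass to underlying $\infty$-categories, verify the Barr--Beck hypotheses (a)--(c) by hand through the identity $U_{\Gamma}\circ\rho^*\simeq\epsilon^*\circ U_A$, and only then invoke \Cref{thm:mnn_bb} directly for the `moreover' statement. The paper instead never returns to the model-categorical level: it uses the identifications $\dcat^{(L,N)}(\Comod_{\Gamma})\simeq\Mod_{\dcat(\Comod_{\Gamma})}(P_{\Gamma}(A))$ and $\dcat^{(L,N)}(\Mod_A)\simeq\Mod_{\dcat(\Mod_A)}(P_A(A))$ from \Cref{prop:br_module} and applies the Behrens--Shah result (\Cref{prop:bs_module}, with $T=P_{\Gamma}(A)$) to the unperiodized adjunction of \Cref{lem:free_forget}; that general lemma hands over the induced adjunction on module categories \emph{together with} properties (a)--(c), and the final equivalence, in one formal step. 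What your approach buys is independence from \Cref{prop:br_module} and a concrete description of $\rho_*$ and $\rho^*$ as restrictions of $\epsilon_*$ and the cofree functor; what it costs is exactly the two points you flag, namely the left Quillen verification for the quasi-projective model structure on the quasi-periodic category and the hands-on identification of $\rho^*(P_A(A))$ with $P_{\Gamma}(\Gamma)$ as a commutative algebra --- both of which the paper's formal route sidesteps (the latter reduces there to $\rho^*(P_A(A))\simeq P_{\Gamma}(\epsilon^*(A))\simeq P_{\Gamma}(\Gamma)$, using that $\epsilon^*$ commutes with periodification, just as you argue). Your reduction of the projection formula to the unit $P_A(A)$ is legitimate because $\dcat^{(L,N)}(\Mod_A)$ is generated under colimits and shifts by its unit; be aware that the analogous reduction in the comodule variable would not be available, since $\dcat(\Comod_{\Gamma})$ is generated by the dualizable comodules rather than by the unit alone.
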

\begin{proof}
	We first observe that $\epsilon_*(P_{\Gamma}(A)) \simeq P_A(A)$ (\Cref{rem:colimit_preserving_periodisation}). Applying \Cref{prop:bs_module,Rem:commutative_diagram_modules} with $T = P_{\Gamma}(A)$ to the adjunction of \Cref{lem:free_forget} we obtain an adjunction 
\[
\rho_* \colon  \Mod_{\dcat{(\Comod_{\Gamma})}}(P_{\Gamma}(A)) \longleftrightarrows \ \Mod_{\dcat(\Mod_{A})}(P_{A}(A)) \colon \rho^*
\]
satisfying the properties listed in the proposition.  Using \Cref{prop:br_module} we have equivalences 
	\[
	\begin{split}
\dcat^{(L,N)}(\Comod_{\Gamma}) &\simeq \Mod_{\dcat(\Comod_{\Gamma})}(P_{\Gamma}(A)) \\
 \dcat^{(L,N)}(\Mod_A) &\simeq \Mod_{\dcat(\Mod_{A})}(P_{A}(A))	
\end{split}\]
giving the adjunction and its properties.

For the `moreover' statement, we note that by \Cref{prop:bs_module}
\[
\dcat^{(L,N)}(\Mod_{A})  \simeq \Mod_{\dcat^{(L,N)}(\Comod_{\Gamma})}(\rho^*(P_{A}(A))). 
\]
But $\rho^*(P_{A}(A)) \simeq P_{\Gamma}(\epsilon^*(A)) \simeq P_{\Gamma}(\Gamma)$ (using the commutativity of the diagram in the proposition), and the result follows. 
\end{proof}

\begin{Prop}\label{prop:module_categories_descendable}
	For each $k \ge 2$ there is an equivalence of symmetric monoidal stable $\infty$-categories\footnote{Here, by abuse of notation we use $\otimes_{\cat P}$ for the appropriate symmetric monoidal product in both $\dcat^{(L,N)}(\Comod_{\Gamma})$ and $\dcat^{(L,N)}(\Mod_{A})$.}
	\[
	\begin{split}
\Mod_{\dcat^{(L,N)}(\Comod_{\Gamma})}(P_{\Gamma}({\Gamma})^{\otimes_{\cat P} k}) &\simeq \Mod_{\dcat^{(L,N)}(\Mod_A)}(P_A(\Gamma)^{\otimes_{\cat P} (k-1)}). 
\end{split}
	\]
\end{Prop}
\begin{proof}
Because $\rho_*$ is symmetric monoidal, we have 
\[
\rho_*(P_{\Gamma}(\Gamma)^{\otimes_{\cat P} (k-1)}) \simeq \rho_*(P_{\Gamma}(\Gamma))^{\otimes_{\cat P} {(k-1)}} \simeq P_A(\Gamma)^{\otimes_{\cat P} (k-1)}. 
\]

We apply \Cref{prop:bs_module} with $T = P_{\Gamma}(\Gamma)^{\otimes_{\cat P} (k-1)}$ to the adjunction
\[
\rho_* \colon \dcat^{(L,N)}(\Comod_{\Gamma}) \leftrightarrows \dcat^{(L,N)}(\Mod_{A}) \colon \rho^*
\]
of \Cref{prop:free_forget_descendable}. We deduce the existence of a commutative diagram
\[\begin{tikzcd}
	{\dcat^{(L,N)}(\Comod_{\Gamma})} & {\dcat^{(L,N)}(\Mod_{A})} \\
	\Mod_{{\dcat^{(L,N)}(\Comod_{\Gamma})}}(P_{\Gamma}(\Gamma)^{\otimes_{\cat P} (k-1)})& \Mod_{\dcat^{(L,N)}(\Mod_{A})}(P_A(\Gamma)^{\otimes_{\cat P} (k-1)})
	\arrow["{\zeta_*}", shift left=1, from=2-1, to=2-2]
	\arrow["{\zeta^*}", shift left=1, from=2-2, to=2-1]
	\arrow["{\rho_*}", shift left=1, from=1-1, to=1-2]
	\arrow["{\rho^*}", shift left=1, from=1-2, to=1-1]
	\arrow[""', shift right=1, from=2-1, to=1-1]
	\arrow[""', shift right=1, from=1-1, to=2-1]
	\arrow[""', shift right=1, from=1-2, to=2-2]
	\arrow[""', shift right=1, from=2-2, to=1-2]
\end{tikzcd}\]
and a symmetric monoidal equivalence
\[
\Mod_{\dcat^{(L,N)}(\Mod_{A})}(P_A(\Gamma)^{\otimes_{\cat P} (k-1)}) \simeq \Mod_{{\dcat^{(L,N)}(\Comod_{\Gamma})}}(\zeta^*(P_A(\Gamma)^{\otimes_{\cat P} (k-1)})).
\]
We recall that $\zeta^*$ is just given by $\rho^*$ after forgetting the module structure. We see then that 
\[
\zeta^*(P_A(\Gamma)^{\otimes_{\cat P} (k-1)}) \simeq \rho^*(P_A(\Gamma^{\otimes k-1})) \simeq P_{\Gamma}(\epsilon^*(\Gamma^{\otimes k-1})) \simeq P_{\Gamma}(\Gamma^{\otimes k}) \simeq P_{\Gamma}(\Gamma)^{\otimes_{\cat P} k}\]
 and the result follows. 
\end{proof}
\begin{Prop}\label{prop:descendable_module_Cats2}
	For each $k \ge 1$ there is an equivalence of symmetric monoidal stable $\infty$-categories 
	\[
\Mod_{\dcat^{(L,N)}(\Mod_A)}(P_A(\Gamma)^{\otimes_{\cat P} k}) \simeq \dcat^{(\Gamma^{\otimes k} \otimes L,N)}(\Mod_{\Gamma^{\otimes k}}). 
	\]
\end{Prop}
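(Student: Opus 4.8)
The plan is to identify both sides with a module category over the ordinary derived $\infty$-category $\dcat(\Mod_A)$, using \Cref{prop:br_module} together with transitivity of module categories. Write $B = \Gamma^{\otimes k}$. Since $\Gamma$ is a flat commutative $A$-algebra, so is $B$ (and it is discrete, as $\Gamma$ is flat), hence $\Comod_B \simeq \Mod_B$, the module $B \otimes_A L$ is invertible over $B$, and $\dcat^{(\Gamma^{\otimes k}\otimes L,N)}(\Mod_{\Gamma^{\otimes k}}) = \dcat^{(B\otimes L,N)}(\Mod_B)$ is defined. Because $P_A \colon \dcat(\Mod_A)\to\dcat^{(L,N)}(\Mod_A)$ is symmetric monoidal (\Cref{rem:qp_monoidal}), $P_A(\Gamma)^{\otimes k}\simeq P_A(\Gamma^{\otimes k}) = P_A(B)$ as commutative algebras in $\dcat^{(L,N)}(\Mod_A)$, so it suffices to work with $P_A(B)$.

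The first ingredient is how periodification interacts with extension of scalars. The functor $F = B\otimes_A(-)\colon \dcat(\Mod_A)\to\dcat(\Mod_B)$ is symmetric monoidal and colimit-preserving, with right adjoint restriction of scalars and $F(L) = B\otimes_A L$; by \Cref{rem:colimit_preserving_periodisation}, applied after passing to underlying $\infty$-categories (cf. \Cref{rem:pu_adjunction}), the square relating $F$ to $P_A$ and $P_B$ commutes. Evaluating at $A$ and using $P_A(B)\simeq P_A(A)\otimes_A B$ (\Cref{rem:qp_monoidal}), this yields equivalences of commutative algebras in $\dcat(\Mod_A)$
\[
P_B(B)\;=\;P_B(F(A))\;\simeq\;F(P_A(A))\;\simeq\;B\otimes_A P_A(A)\;\simeq\;P_A(B),
\]
where $P_B(B)$ is regarded as an $A$-algebra along $A\to B\to P_B(B)$.

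It then remains to unwind both sides. By \Cref{prop:br_module}, applied to the discrete Hopf algebroids $(A,A)$ and $(B,B)$, we have $\dcat^{(L,N)}(\Mod_A)\simeq\Mod_{\dcat(\Mod_A)}(P_A(A))$ and $\dcat^{(B\otimes L,N)}(\Mod_B)\simeq\Mod_{\dcat(\Mod_B)}(P_B(B))$, compatibly with the forgetful functors. Using in addition the standard identification $\dcat(\Mod_B)\simeq\Mod_{\dcat(\Mod_A)}(B)$ and transitivity of module categories, $\Mod_{\Mod_{\cat C}(R)}(S)\simeq\Mod_{\cat C}(S)$ for a commutative $R$-algebra $S$ in a presentably symmetric monoidal $\infty$-category $\cat C$ \cite{lurie-higher-algebra}, we obtain
\[
\Mod_{\dcat^{(L,N)}(\Mod_A)}(P_A(B))\;\simeq\;\Mod_{\Mod_{\dcat(\Mod_A)}(P_A(A))}(P_A(B))\;\simeq\;\Mod_{\dcat(\Mod_A)}(P_A(B))
\]
and
\[
\dcat^{(\Gamma^{\otimes k}\otimes L,N)}(\Mod_{\Gamma^{\otimes k}})\;\simeq\;\Mod_{\Mod_{\dcat(\Mod_A)}(B)}(P_B(B))\;\simeq\;\Mod_{\dcat(\Mod_A)}(P_B(B)),
\]
the latter with $P_B(B)$ viewed as an $A$-algebra as above. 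Since $P_A(\Gamma)^{\otimes k}\simeq P_A(B)$ and $P_A(B)\simeq P_B(B)$, chaining these symmetric monoidal equivalences gives the claim. I expect the main obstacle to be bookkeeping: verifying that every intermediate equivalence is symmetric monoidal and that the algebra objects are always compared over the correct base — in particular, that under \Cref{prop:br_module} the forgetful functor $U_A$ (respectively restriction of scalars $\dcat(\Mod_B)\to\dcat(\Mod_A)$) corresponds to the lax symmetric monoidal forgetful functor out of the relevant module category, so that the images of $P_A(B)$ and $P_B(B)$ in $\CAlg(\dcat(\Mod_A))$ carry the stated algebra structures. The remaining checks are formal.
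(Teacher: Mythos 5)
Your proposal is correct and follows essentially the same route as the paper: both arguments reduce the two sides to module categories over $\dcat(\Mod_A)$ (via \Cref{prop:br_module}, base change along $A \to \Gamma^{\otimes k}$, and transitivity of module categories from \cite[Corollary 3.4.1.9]{lurie-higher-algebra}), and then identify the algebra objects $P_A(\Gamma)^{\otimes k} \simeq P_A(\Gamma^{\otimes k}) \simeq \Gamma^{\otimes k} \otimes P_A(A) \simeq P_{\Gamma^{\otimes k}}(\Gamma^{\otimes k})$ using \Cref{rem:qp_monoidal,rem:colimit_preserving_periodisation}. The only cosmetic difference is that the paper packages the base-change step through \Cref{prop:bs_module} applied with $T = P_A(A)$, where you invoke $\dcat(\Mod_{\Gamma^{\otimes k}}) \simeq \Mod_{\dcat(\Mod_A)}(\Gamma^{\otimes k})$ and transitivity directly.
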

\begin{proof}
	By base change along $A \to \Gamma^{\otimes k}$ (given by using the left unit) we have an adjunction
	\[
\dcat(\Mod_{A}) \leftrightarrows \dcat(\Mod_{\Gamma^{\otimes k}})
	\]
	which satisfies the conditions of \Cref{thm:mnn_bb}. We can therefore apply \Cref{prop:bs_module} with $T = P_A(A)$ to deduce an induced adjunction 
	\[
\Mod_{\dcat(\Mod_{A})}(P_{A}(A)) \leftrightarrows \Mod_{\dcat(\Mod_{\Gamma^{\otimes k}})}(\Gamma^{\otimes k} \otimes P_A(A))
\]
and an equivalence 
\[
\Mod_{\dcat(\Mod_{\Gamma^{\otimes k}})}(\Gamma^{\otimes k} \otimes P_A(A)) \simeq \Mod_{\cat D(\Mod _A)}(\Gamma^{\otimes k} \otimes P_A(A))
\]
We now make two observations. The first is that there is an equivalence  $\Gamma^{\otimes k} \otimes P_A(A) \simeq P_A(\Gamma^{\otimes k}) \simeq P_A(\Gamma)^{\otimes_{\cat P} k}$ in $\cat D(\Mod_A)$ (\Cref{rem:qp_monoidal} and \Cref{not:qp-tensor}), so that 
\begin{equation}\label{eq:align_1}
\Mod_{\dcat(\Mod_{\Gamma^{\otimes k}})}(\Gamma^{\otimes k} \otimes P_A(A)) \simeq \Mod_{\cat D(\Mod _A)}(P_A(\Gamma)^{\otimes_{\cat P} k}).
\end{equation}
The second observation (which is a special case of \Cref{rem:colimit_preserving_periodisation} combined with \Cref{prop:br_module}) is that if we periodise $\cat D(\Mod_{\Gamma^{\otimes k}})$ with respect to $(\Gamma^{\otimes k} \otimes L,N)$ (note that $L$ invertible in $\Mod_A$ implies $\Gamma^{\otimes k} \otimes L$ is invertible in $Mod_{\Gamma^{\otimes k}}$), then 
\begin{align}\label{eq:align2}
\begin{split}
\cat D^{(\Gamma^{\otimes k} \otimes L,N)}(\Mod_{\Gamma^{\otimes k}}) &\simeq \Mod_{\dcat(\Mod_{\Gamma^{\otimes k}})}(P_{\Gamma^{\otimes k}}(\Gamma^{\otimes k}))\\& \simeq Mod_{\dcat(\Mod_{\Gamma^{\otimes k}})}(\Gamma^{\otimes k} \otimes P_{A}(A)) 
\end{split}
\end{align}
Finally, we note that because 
\[
\dcat^{(L,N)}(\Mod_A) \simeq \Mod_{\cat D(\Mod_A)}(P_{A}(A)). 
\]
we have
\begin{align}\label{eq:align3}
\begin{split}
\Mod_{\dcat^{(L,N)}(\Mod_A)}(P_A(\Gamma)^{\otimes_{\cat P} k})  & \simeq \Mod_{\Mod_{\cat D(\Mod_A)}(P_{A}(A))}(P_A(\Gamma)^{\otimes_{\cat P} k}) \\
&\simeq \Mod_{\dcat(\Mod_A)}(P_A(\Gamma)^{\otimes_{\cat P} k})
\end{split}
\end{align}
where the last equivalence follows by \cite[Corollary 3.4.1.9]{lurie-higher-algebra}. 

Together, we deduce that 
\begin{align*}
\Mod_{\dcat^{(L,N)}(\Mod_A)}(P_A(\Gamma)^{\otimes_{\cat P} k}) & \simeq \Mod_{\dcat(\Mod_{A})}(P_A(\Gamma)^{\otimes_{\cat P} k}) && [\text{\Cref{eq:align3}}]
\\
&\simeq \Mod_{\dcat(\Mod_{\Gamma^{\otimes_{\cat P} k}})}(\Gamma^{\otimes k} \otimes P_A(A))  && [\text{\Cref{eq:align_1}}]\\
& \simeq \cat D^{(\Gamma^{\otimes k} \otimes L,N)}(\Mod_{\Gamma^{\otimes k}})  && [\text{\Cref{eq:align2}}]
\end{align*}
as claimed.
\end{proof}
We now introduce a class of Hopf algebroids with a good theory of descent. 
\begin{Def}\label{def:descendable_ha}
  We say that a Hopf algebroid $(A,\Gamma)$ is descendable if the commutative algebra object $\Gamma \in \CAlg{(\dcat(\Comod_{\Gamma}))}$ is descendable in the sense of \cite[Definition 3.18]{Mathew2016Galois}; that is, the thick tensor ideal generated by $\Gamma$ is all of $\dcat(\Comod_{\Gamma})$. 
\end{Def}
\begin{Rem}\label{rem:descent_periodic}
  We note that if $\Gamma$ is descendable in $\dcat(\Comod_{\Gamma})$, then $P_{\Gamma}(\Gamma)$ is descendable in $\dcat^{(L,N)}(\Comod_{\Gamma})$ by \cite[Corollary 3.20]{Mathew2016Galois}. 
\end{Rem}
\begin{Thm}\label{thm:descent_hopf}
		Suppose $(A,\Gamma)$ is a descendable Hopf algebroid, and $L$ is an invertible $\Gamma$-comodule, then there is an equivalence of symmetric monoidal stable $\infty$-categories
\[
\dcat^{(L,N)}(\Comod_{\Gamma}) \simeq \Tot\left(\!\begin{tikzcd}
	{\dcat^{(L,N)}(\Mod_{A})} & {\dcat^{(\Gamma \otimes L,N)}(\Mod_{\Gamma})} & \cdots
	\arrow[shift left=1, from=1-2, to=1-3]
	\arrow[shift right=1, from=1-1, to=1-2]
	\arrow[shift left=1, from=1-1, to=1-2]
	\arrow[shift right=1, from=1-2, to=1-3]
	\arrow[from=1-2, to=1-3]
\end{tikzcd} \right)
\]
\end{Thm}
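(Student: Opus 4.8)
The plan is to realise the equivalence as an instance of Mathew's descent theorem applied to the commutative algebra object $P_\Gamma(\Gamma)\in\CAlg(\dcat^{(L,N)}(\Comod_\Gamma))$, and then to identify the terms of the resulting cosimplicial diagram of module categories with the categories on the right-hand side by invoking \Cref{prop:free_forget_descendable,prop:module_categories_descendable,prop:descendable_module_Cats2}. First I would note that, since $(A,\Gamma)$ is descendable, $\Gamma$ is a descendable commutative algebra in $\dcat(\Comod_\Gamma)$, and hence $P_\Gamma(\Gamma)$ is descendable in $\dcat^{(L,N)}(\Comod_\Gamma)$ by \Cref{rem:descent_periodic}. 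By Mathew's theorem that a descendable algebra object $R$ of a symmetric monoidal stable $\infty$-category $\cat C$ satisfies $\cat C\simeq\Tot\left(\Mod_{\cat C}(R^{\otimes\bullet+1})\right)$ --- i.e. the Amitsur (cobar) resolution converges --- \cite[Proposition 3.22]{Mathew2016Galois}, we obtain an equivalence
\[
\dcat^{(L,N)}(\Comod_\Gamma)\simeq\Tot\left(\Mod_{\dcat^{(L,N)}(\Comod_\Gamma)}\!\left(P_\Gamma(\Gamma)^{\otimes\bullet+1}\right)\right),
\]
whose cosimplicial structure maps are induced by the unit and the multiplication of $P_\Gamma(\Gamma)$. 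Since $P_\Gamma$ is symmetric monoidal and $\Gamma\simeq\epsilon^*(A)$, this cosimplicial algebra is the periodification of the usual cobar complex of the Hopf algebroid, so its coface and codegeneracy maps are precisely those coming from the left and right units, the coproduct, and the counit of $(A,\Gamma)$.

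Next I would identify the term in cosimplicial degree $k$, namely $\Mod_{\dcat^{(L,N)}(\Comod_\Gamma)}(P_\Gamma(\Gamma)^{\otimes k+1})$. For $k=0$ this is $\dcat^{(L,N)}(\Mod_A)$ by the `moreover' clause of \Cref{prop:free_forget_descendable}. For $k\ge 1$, \Cref{prop:module_categories_descendable} (applied with $k+1$ in place of $k$) together with \Cref{prop:descendable_module_Cats2} gives
\[
\Mod_{\dcat^{(L,N)}(\Comod_\Gamma)}\!\left(P_\Gamma(\Gamma)^{\otimes k+1}\right)\simeq\Mod_{\dcat^{(L,N)}(\Mod_A)}\!\left(P_A(\Gamma)^{\otimes k}\right)\simeq\dcat^{(\Gamma^{\otimes k}\otimes L,N)}(\Mod_{\Gamma^{\otimes k}}),
\]
which, reading $\Gamma^{\otimes 0}=A$, also recovers the $k=0$ term. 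These are exactly the categories $\dcat^{(L,N)}(\Mod_A)$, $\dcat^{(\Gamma\otimes L,N)}(\Mod_\Gamma)$, $\dots$ displayed in the statement.

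The step I expect to be the main obstacle is verifying that the termwise equivalences of the previous paragraph are compatible with the cosimplicial structure maps, so that they assemble into an equivalence of cosimplicial $\infty$-categories and hence an equivalence on totalizations. The key observation is that every equivalence used above is produced by base change along a map of commutative algebra objects --- the maps $A\to\Gamma^{\otimes k}$ built from the units of the Hopf algebroid, and the maps between the $P$-periodifications built from the coproduct and counit --- and that the assignment $R\mapsto\Mod_{\cat C}(R)$ is functorial in $R$ via extension of scalars. Thus the cosimplicial object $[k]\mapsto\Mod_{\dcat^{(L,N)}(\Comod_\Gamma)}(P_\Gamma(\Gamma)^{\otimes k+1})$ and the comodule-resolution cosimplicial object $[k]\mapsto\dcat^{(\Gamma^{\otimes k}\otimes L,N)}(\Mod_{\Gamma^{\otimes k}})$ both arise by applying module-category functors to the same cosimplicial commutative algebra. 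Making this precise amounts to recording that each of \Cref{prop:free_forget_descendable,prop:module_categories_descendable,prop:descendable_module_Cats2} is natural in the evident base-change functors --- which is what their proofs, via the monoidal Barr--Beck theorem and \cite[Corollary 3.4.1.9]{lurie-higher-algebra}, in fact provide --- and then passing to the limit to conclude.
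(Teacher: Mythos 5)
Your proposal is correct and follows essentially the same route as the paper: descendability of $P_\Gamma(\Gamma)$ via \Cref{rem:descent_periodic}, the Amitsur/cobar resolution from \cite[Proposition 3.22]{Mathew2016Galois}, and the termwise identifications via \Cref{prop:free_forget_descendable,prop:module_categories_descendable,prop:descendable_module_Cats2}. Your final paragraph on compatibility of the termwise equivalences with the cosimplicial structure maps is a point the paper's proof passes over in silence, and your justification of it (everything arises by base change along a cosimplicial commutative algebra) is the right one.
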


\begin{proof}
 By \Cref{rem:descent_periodic} and \cite[Proposition 3.22]{Mathew2016Galois} there is an equivalence 
	\[
	\begin{split}
\dcat^{(L,N)}(\Comod_{\Gamma}) \simeq \Tot(\Mod_{\dcat^{(L,N)}(\Comod_{\Gamma})}&(P_{\Gamma}(\Gamma)) \rightrightarrows \\&{\Mod_{\dcat^{(L,N)}(\Comod_{\Gamma})}(P_{\Gamma}(\Gamma)^{\otimes_{\cat P} 2})} \rightthreearrow \cdots ).
\end{split}
	\]	By \Cref{prop:free_forget_descendable,prop:module_categories_descendable,prop:descendable_module_Cats2} we have
	\[
	\begin{split}
{\Mod_{\dcat^{(L,N)}(\Comod_{\Gamma})}(P_{\Gamma}(\Gamma)^{\otimes_{\cat P} k})} &\simeq \Mod_{\dcat^{(L,N)}(\Mod_A)}(P_A(\Gamma)^{\otimes_{\cat P} (k-1)})\\
&\simeq \dcat^{(\Gamma^{\otimes (k-1)} \otimes L,N)}(\Mod_{\Gamma^{\otimes k-1}})
\end{split}
	\] 
	as required. 
\end{proof}
\begin{Rem}
    A similar, but simpler, argument shows that if $(A,\Gamma)$ is a descendable Hopf algebroid, then there is an equivalence of symmetric monoidal stable $\infty$-categories
\[
\dcat(\Comod_{\Gamma}) \simeq \Tot\left(\!\begin{tikzcd}
    {\dcat(\Mod_{A})} & {\dcat(\Mod_{\Gamma})} & \cdots
    \arrow[shift left=1, from=1-2, to=1-3]
    \arrow[shift right=1, from=1-1, to=1-2]
    \arrow[shift left=1, from=1-1, to=1-2]
    \arrow[shift right=1, from=1-2, to=1-3]
    \arrow[from=1-2, to=1-3]
\end{tikzcd} \right).
\]
This uses \Cref{lem:free_forget} and the non-periodic version of \Cref{prop:descendable_module_Cats2}. 
\end{Rem}
\section{Landweber exact Hopf algebroids and the moduli stack of formal groups}
In this section we recall the moduli stack of formal groups, and the relation between chromatic homotopy theory and the height filtration of this stack. We then introduce two examples of Hopf algebroids with descent. 
\begin{Not}
    Let $\mathcal{M}_{\mathrm{fg}}$ denote the moduli stack of ($p$-typical) formal groups. 
\end{Not}
\begin{Rem}
    For a detailed study of this moduli stack, we refer the reader to work of Naumann \cite{Naumann2007stack}, Goerss \cite{goerss_quasi-coherent_2008}, or the survey article \cite{bb}. For now, we simply recall the following results: the height filtration of formal groups gives rise to a filtration by closed substacks
    \[
\mathcal{M}_{\mathrm{fg}} \supset \mathcal{M}(1) \supset \mathcal{M}(2) \supset \cdots
    \]
    We let $\mathcal{M}_{\mathrm{fg}}^{\le n}$ denote the open complement of $\mathcal M(n+1)$, corresponding to formal groups of height at most $n$. We also set $\mathcal H(n) \coloneqq \mathcal M(n) \cap \mathcal{M}_{\mathrm{fg}}^{\le n}$, so that $H(n)$ is a locally closed substack corresponding to formal groups of height exactly $n$. 

    Finally, we note that there is an equivalence of symmetric monoidal $\infty$-categories $\QCoh(\mathcal{M}_{\mathrm{fg}}) \simeq \Comod_{BP_*BP}^{\mathrm{ev}}$ between quasi-coherent sheaves on the moduli stack of $p$-typical formal groups and evenly-graded $BP_*BP$-comodules (see, for example, \cite{Naumann2007stack}).  
\end{Rem}
We recall the following very general definition from \cite{HoveyStrickl2005Comodules}. We will generally use this in the case where $A = BP_*/I_h$, where $I_h = (v_0,\ldots,v_{h-1})$, with the usual conventions that $v_0 = p$ and $I_0 = (0)$. 
\begin{Def}[Hovey--Strickland]
	Let $(A,\Gamma)$ be a flat Hopf algebroid, and $f \colon A \to B$ a ring homomorphism. We say that $B$ is Landweber exact over $(A,\Gamma)$ (if it is clear, we will simply say Landweber exact) if the functor $M \mapsto M \otimes_A B$ from $\Gamma$-comodules to $B$-modules is exact. 
\end{Def}
\begin{Rem}\label{rem:corresponding_ha}
In this case, one can define a flat Hopf algebroid $(B,\Gamma_B)$ where $\Gamma_B = B \otimes_A \Gamma \otimes_A B$. See \cite[Section 2]{HoveyStrickl2005Comodules} for more details. 
\end{Rem}
\begin{Def}
    Given a morphism of rings $f \colon BP_*/I_h \to R$ we define the height of $f$ to be 
    \[
ht(f) \coloneqq \max \{ n \ge 0 \mid R/I_nR \ne 0 \}
    \]
    where we allow $\infty$ and set $ht(f) = 0$ in the case $R = 0$. 
\end{Def}
\begin{Rem}
	Our main examples will come from the following result of Naumann \cite[Proposition 28 and Corollary 30]{Naumann2007stack}. Note that in the following theorem, the category of comodules considered does not take into account any grading, see \cite[Remarks 29 and 34]{Naumann2007stack}.
\end{Rem}
\begin{Thm}[Naumann]\label{thm:naumann}
    Let $0 \le h \le n < \infty$, and let $BP_*/I_h \to R \ne 0$ be Landweber exact of height $n$.\footnote{Here we mean that $R$ is Landweber exact over the Hopf algebroid ($BP_*/I_h,BP_*BP/I_hBP_*BP$)} Let $(R,\Gamma) \coloneqq (R, R\otimes_{BP_*}BP_*BP \otimes_{BP_*}R)$ be the associated flat Hopf algebroid. Then, there is a symmetric monoidal equivalence of categories 
        \[\QCoh(\mathcal M(h) \cap \mathcal M_{fg}^{\le n}) \simeq \Comod_{\Gamma}
\]
    \end{Thm}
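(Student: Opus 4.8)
The plan is to prove the theorem by faithfully flat descent for quasi-coherent sheaves along the cover $\Spec R\to\mathcal{M}(h)\cap\mathcal{M}_{\mathrm{fg}}^{\le n}$ induced by the structure map $f\colon BP_*/I_h\to R$. First I would recall the relevant geometry. The closed substack $\mathcal{M}(h)\subseteq\mathcal{M}_{\mathrm{fg}}$ is the vanishing locus of the invariant regular ideal $I_h=(v_0,\dots,v_{h-1})$, so $\Spec BP_*/I_h\to\mathcal{M}(h)$ is a faithfully flat affine presentation and $\QCoh(\mathcal{M}(h))\simeq\Comod_{BP_*BP/I_hBP_*BP}$ symmetric monoidally. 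Since $n+1>h$ we have $\mathcal{M}(n+1)\subseteq\mathcal{M}(h)$, so $U\coloneqq\mathcal{M}(h)\cap\mathcal{M}_{\mathrm{fg}}^{\le n}$ is precisely the open substack of $\mathcal{M}(h)$ obtained by deleting $\mathcal{M}(n+1)$; its underlying space is a finite chain of points, with the height-$h$ stratum $\mathcal{H}(h)$ generic and the height-$n$ stratum $\mathcal{H}(n)$ the unique closed point.

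Next, I would produce the cover. The composite $q\colon\Spec R\to\mathcal{M}(h)$ is flat: this is the geometric reformulation of the hypothesis that $R$ is Landweber exact over $(BP_*/I_h,\,BP_*BP/I_hBP_*BP)$, that is, the Landweber exact functor theorem in its relative form. The hypothesis that $f$ has height $n$ gives $I_{n+1}R=R$, so $q$ misses $\mathcal{M}(n+1)$ and therefore factors through a flat morphism $q\colon\Spec R\to U$, which is quasi-compact (as $\Spec R$ is quasi-compact and $U$ is quasi-separated) and hence has open image. On the other hand $R/I_nR\ne 0$ while $(R/I_nR)/v_n=R/I_{n+1}R=0$ forces $v_n$ to be a unit in $R/I_nR$; hence the nonempty closed subscheme $\Spec R/I_nR\subseteq\Spec R$ maps under $q$ into the height-$n$ stratum $\mathcal{H}(n)$, whose underlying space is the single closed point of $U$. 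Thus that closed point lies in the image of $q$, and the only open of $U$ containing it is $U$ itself, so $q$ is surjective, hence faithfully flat. Finally $q$ is affine, because $\mathcal{M}(h)$ has affine diagonal and $U\hookrightarrow\mathcal{M}(h)$ is a monomorphism; consequently $\Spec R\times_U\Spec R=\Spec R\times_{\mathcal{M}(h)}\Spec R=\Spec\big(R\otimes_{BP_*}BP_*BP\otimes_{BP_*}R\big)=\Spec\Gamma$, and likewise the higher fiber powers of $q$ are the affine schemes $\Spec(\Gamma\otimes_R\cdots\otimes_R\Gamma)$.

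Then I would conclude. Quasi-coherent sheaves satisfy fpqc descent, so the faithfully flat quasi-compact morphism $q$ is of effective descent for quasi-coherent sheaves; hence $\QCoh(U)$ is the limit over $\Delta$ of the cosimplicial category of modules on the \v{C}ech nerve of $q$, which is exactly the category of comodules over the flat Hopf algebroid $(R,\Gamma)$. The equivalence is symmetric monoidal because, after restricting along $q$ to underlying $R$-modules, the tensor product of sheaves on $U$ becomes $\otimes_R$. One should also observe that $(R,\Gamma)$ is again an Adams Hopf algebroid — inherited from the fact that $BP_*BP$ is a filtered colimit of finitely generated projective $BP_*$-modules — so that $\Comod_\Gamma$ is the Grothendieck abelian category required by the standing conventions.

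The main obstacle is genuinely the flatness of $q\colon\Spec R\to\mathcal{M}(h)$: this is not a formal consequence of the definitions but is precisely the content of the Landweber exact functor theorem, and carrying it out amounts to matching the Hovey--Strickland notion of Landweber exactness (exactness of $-\otimes_{BP_*/I_h}R$ on $\Gamma$-comodules) with flatness of the induced morphism of stacks. A lesser but still delicate point is the affineness of $q$, hence the identification of its \v{C}ech nerve with $(R,\Gamma)$, which rests on $\mathcal{M}(h)$ (and therefore $U$) having affine diagonal. There is also a purely algebraic alternative that avoids stacks: following Hovey--Strickland one identifies $\Comod_\Gamma$ directly with the category of $I_{n+1}$-local $BP_*BP/I_h$-comodules, which is by construction $\QCoh$ of the open complement of $\mathcal{M}(n+1)$; the essential input, exactness/flatness, is the same.
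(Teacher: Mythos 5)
The paper offers no proof of this statement: it is quoted from Naumann and attributed to \cite[Proposition 28 and Corollary 30]{Naumann2007stack}, so the only ``proof'' on record is a citation. Your argument is, in outline, a reconstruction of Naumann's own: flatness of $q\colon\Spec R\to\mathcal{M}(h)$ as the geometric content of Landweber exactness, affineness of the fibre powers of $q$ via the affine diagonal of $\mathcal{M}(h)$, identification of the \v{C}ech nerve with the Hopf algebroid $(R,\Gamma)$, and fpqc descent for quasi-coherent sheaves. The strategy is sound and is the intended one.

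Two points need attention. The step ``$q$ is quasi-compact and flat, hence has open image'' is not a valid implication: $\Spec\bbQ\to\Spec\bbZ$ is affine and flat, yet its image (the generic point) is not open; openness of flat morphisms requires local finite presentation, which $BP_*/I_h\to R$ need not satisfy. Fortunately you do not need openness. Flatness gives going-down, so the image of $q$ is closed under generization; you have already shown that the unique closed point $\mathcal{H}(n)$ of the finite chain $|U|=\{\mathcal{H}(h),\dots,\mathcal{H}(n)\}$ lies in the image (via $R/I_nR\neq 0$ with $v_n$ invertible on it), and a generization-closed subset of this chain containing its closed point is everything. With that repair the surjectivity, and hence faithful flatness, of $q$ is established. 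Separately, note the grading convention the paper flags immediately before the statement (citing Naumann's Remarks 29 and 34): the comodule category in the theorem is the \emph{ungraded} one, whereas elsewhere $\QCoh(\mathcal{M}_{\mathrm{fg}})$ is matched with evenly graded $BP_*BP$-comodules. Your descent argument does produce the ungraded category, which is the intended reading, but you should say explicitly that the presentation $\Spec BP_*/I_h\to\mathcal{M}(h)$ you descend along is the one for the moduli of formal groups with the grading (equivalently the $\mathbb{G}_m$-action) forgotten, so that the \v{C}ech Hopf algebroid is the ungraded $(R,\Gamma)$ appearing in the statement.
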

\begin{Exa}\label{ex:morava_e-theory}
	The first main example to keep in mind is the Lubin--Tate cohomology theory, which is the Landweber exact $BP_*$-algebra $E_{n,\ast}$, where 
	\[
E_{n,\ast} \cong \mathbb{W}(\mathbb{F}_{p^n})[u_1,\ldots,u_{n-1}][\![u^{\pm 1}]\!]. 
	\]

	The elements $u_i$ have degree 0, while $u$ has degree $2$. Here the map
	\[
BP_* \to E_{n,\ast}
	\]
	sends 
	\[
v_i \mapsto \begin{cases}
	u_i u^{2^i-1} & 1 \le i \le n-1 \\
	u^{2^n-1} & i=n \\
	0 & i>n.
\end{cases}
	\]
    This corresponds to the case of $h = 0$ above, and (taking into account the grading) gives an equivalence 
    \[
\QCoh(\mathcal M_{fg}^{\le n}) \simeq \Comod_{(E_n)_*(E_n)}^{ev}
    \]
    bewteen quasi-coherent sheaves on $\mathcal{M}_{fg}^{\le n}$ and evenly-graded $(E_n)_*(E_n)$-comodules. 
\end{Exa}
\begin{Exa}\label{exa:height_n}
    The map $BP_* \to {E_{n,_*}}$ in the previous example gives rise to a quotient map $BP_*/I_n \to E_*/I_n \cong \mathbb{F}_{p^n}[u^{\pm 1}]$ which is Landweber exact of height $n$. We denote the corresponding Hopf algebroid in the sense of \Cref{rem:corresponding_ha} as $(K_{n,\ast},\Sigma_{n,\ast})$. Naumann's theorem (in the case $h = n$) then gives rise to an equivalence 
        \[
\QCoh(\mathcal{H}(n)) \simeq \Comod_{\Sigma_{n,\ast}}^{ev}.
    \]
A direct proof of this is also given in \cite[Proposition 2.10]{barthel2021morava} where the Hopf algebroid $(K_{n,\ast},\Sigma_{n,\ast})$ is denoted by $(K_*,K_*E)$.

    Note that the Hopf algebroid $(K_{n,\ast},\Sigma_{n,\ast})$ is a 2-periodic version of the Hopf algebroid $(K(n)_*,K(n)_*K(n))$ used in \cite{MillerRavenel1977Morava}; Naumann's theorem implies that the comodule categories are equivalent in either case. For later use we recall that the Miller--Ravenel change of rings theorem \cite[Theorem 2.10]{MillerRavenel1977Morava} states that 
    \[
\Ext^{s,t}_{BP_*BP}(BP_*,v_n^{-1}BP_*/I_n) \cong \Ext^{s,t}_{\Sigma_{n,\ast}}(K_{n,\ast},K_{n,\ast}). 
    \]
    Morava has shown (see \cite[Theorem 6.2.10]{Ravenel1986Complex}) that if $p-1 \nmid n$ then the groups $\Ext^{s,t}_{\Sigma_{n,\ast}}(K_{n,\ast},K_{n,\ast}) = 0$ for $s > n^2$. Moreover, by Morava's change of rings theorem (see \cite[Theorem 6.5]{Devinatz1995Moravas} or \cite[Corollary 5.5]{barthel2021morava}) we have
    \[
\Ext^{s,t}_{\Sigma_{n,\ast}}(K_{n,\ast},K_{n,\ast}) \cong H^s(\mathbb{G}_n,(E_n)_t/(I_n)),
    \]
    where $\mathbb{G}_n \coloneqq \mathbb{S}_n \rtimes \Gal(\bbF_{p^n}/\bbF_{p})$ is the (extended) Morava stabilizer group, and $\mathbb{S}_n \coloneqq \Aut(H_n)$, for $H_n$ the Honda formal group of height $n$ over $\mathbb{F}_{p^n}$. The action of $\mathbb{G}_n$ on $(E_n)_*/I_n \cong \mathbb{F}_{p^n}[u^{\pm 1}]$ is described as follows: the Galois group acts in the usual way on $\mathbb{F}_{p^n}$, and $\mathbb{S}_n$ acts on $u$ by $s \cdot u = p(s)u$ where $p(s)$ takes the leading coefficient of $s \in \mathbb{S}_n$, which is necessarily an element of $\mathbb{F}_{p^n}^{\times}$. 
\end{Exa}
\begin{comment}
\begin{Prop}
	Let $E$ be a Landweber exact $BP_*/I_{n-1}$-algebra of finite height $n$, then there is an equivalence of categories
	\[
\QCoh(\mathcal{M}_{fg}^{=n}) \simeq \Comod_{E_0E}
	\]
\end{Prop}
\begin{Exa}
	Here we keep in mind the Landweber exact $BP_*/I_n$-algebra $E_{n,\ast}/I_n \cong \bbF_{p^n}[u^{\pm 1}]$. 
\end{Exa}
\end{comment}
In both examples above, the base ring is even-periodic in the following sense. 
\begin{Def}
	Let $R$ be a graded ring, then $R$ is said to be even-periodic if it is of the form $R_0[u^{\pm 1}]$ where $u$ is an element of degree 2. 
\end{Def}
\begin{Rem}
	If particular, we see that $L \coloneqq R_2$ is an invertible $R$-module, and $R_{2n} \cong L^{\otimes n}$ for all $n$. 
\end{Rem}
\begin{Rem}
	 Let $0 \le h \le n < \infty$, and let $BP_*/I_h \to R \ne 0$ be Landweber exact of height $n$ for $R$ even periodic. Let $(R,\Gamma) \coloneqq (R, R\otimes_{BP_*}BP_*BP \otimes_{BP_*}R)$ be the associated flat Hopf algebroid, and $(R_0,\Gamma_0)$ the corresponding Hopf algebroid using degree 0 elements. Even-periodicity implies that there is an equivalence between $\Gamma_0$-comodules and even graded $\Gamma$-comodules, see \cite[Remark 3.14]{goerss_quasi-coherent_2008}. Along with \Cref{thm:naumann}, we see that
  \[
\Comod^{ev}_{\Gamma_0} \simeq \QCoh(\mathcal{M}(h) \cap \mathcal{M}_{fg}^{\le n} ). 
  \]
\end{Rem}
\begin{Def}\label{def:franke}
	 Let $0 \le h \le n < \infty$, and let $BP_*/I_h \to R \ne 0$ be Landweber exact of height $n$ with $R$ even-periodic. Let $(R,\Gamma) \coloneqq (R, R\otimes_{BP_*}BP_*BP \otimes_{BP_*}R)$ be the associated flat Hopf algebroid, and set $L \coloneqq R_2$. The category of quasi-periodic $R_0R$-comodules $\dcat^{per}(\Comod_{R_0R})$ is
	\[
\dcat^{per}(\Comod_{R_0R}) \coloneqq \dcat^{(L,2)}(\Comod_{R_0R}). 
	\]
	For $k \ge 0$ we also define
	\[
\dcat^{per}(\Mod_{R_0R^{\otimes k}}) \coloneqq \dcat^{(R_0R^{\otimes k}\otimes L,2)}(\Mod_{R_0R^{\otimes k}})
	\]
	where in the case $k = 0$, we take $R_0R^{\otimes k} \cong R_0$. 
\end{Def}
\begin{Rem}
    Suppose $R$ and $F$ are even-periodic $p$-local Landweber exact homology theories of height $n$ as in the previous definition. Then, as noted previously, there is an equivalence of symmetric monoidal abelian categories
    \begin{equation}\label{eq:cat_equivalence}
\Comod_{R_*R} \simeq \Comod_{F_*F}.
    \end{equation}
    It follows that $\Comod_{R_0R} \simeq \Comod_{R_0R}$ as well, and hence 
\[
\dcat(\Comod_{R_0R}) \simeq \dcat(\Comod_{F_0F})
\]
as symmetric monoidal $\infty$-categories. We claim that we also have an equivalence $\dcat^{per}(\Comod_{R_0R}) \simeq \dcat^{per}(\Comod_{F_0F})$. This follows from \Cref{prop:br_module} if we can show that $P_{R_0R}(R_0)$ and $P_{F_0F}(F_0)$ correspond under the equivalence of categories. Let $L_R \coloneqq R_2$ and $L_F \coloneqq F_2$. Because the equivalence \eqref{eq:cat_equivalence} is monoidal the units $R_*$ and $F_*$ correspond (as graded rings), and hence $R_0$ and $F_0$ correspond (as well as $L_R$ and $L_F$). In other words, 
\[
P_{R_0R}(R_0) = \bigoplus_{k \in \mathbb{Z}} E_0 \otimes L_R^{\otimes k}
\]
corresponds to 
\[
P_{F_0F}(F_0) = \bigoplus_{k \in \mathbb{Z}} F_0 \otimes L_F^{\otimes k}
\]
and vice-versa, and the claim follows. 
\end{Rem}
\begin{Rem}\label{rem:algebraic_results}
    Taking $h = 0$, and letting $BP_* \to E \ne 0$ be Landweber exact of height $n$ with $E$ even-periodic, the category $\dcat^{per}(\Comod_{E_0E})$ is also known Franke's comodule category, as it was essentially introduced in the unpublished paper \cite{franke1996uniqueness} (Franke works with model categories and derivators instead of stable $\infty$-categories). The main result of \cite{BarthelSchlankStapleton2020Chromatic} is that for any non-principal ultrafilter $\mathcal{F}$ on the set of primes, there is an equivalence of symmetric monoidal stable $\infty$-categories
    \[
\prod_{\mathcal F}\Sp_n \simeq \prod_{\mathcal F} \dcat^{per}(\Comod_{E_0E}). 
    \]
    In a related result, Pstr{\c a}gowski \cite{piotr_paper} shows that for $2p-2 > 2(n^2+n)$ and $k = 2p-2-n^2-n$ there is an equivalence \[
h_k\Sp_n \simeq h_k\dcat^{per}(\Comod_{E_0E})
    \]
    between the homotopy $k$-categories of the $\infty$-category of $E$-local spectra and the category of quasi-periodic $E_0E$-comodules. More recently, in \cite[Theorem 8.13]{patchkoria2021adams} Patchkoria and Pstr{\c a}gowski  have shown that the bound can be improved to $2p-2 >n^2+n$. 
\end{Rem}
We note the following, which will be used later (we begin to omit subscripts from the periodization functors, as they behave as expected; for example $P_{E_0E}(E_0)$ and $P_{E_0}(E_0)$ agree as $E_0$-modules). 
\begin{Lem}
	\[
P(E_0) \cong E_* \quad \text{ and } \quad P(E_0E) \cong E_*E. 
	\]
\end{Lem}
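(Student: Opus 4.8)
The plan is to unwind the explicit description of the periodification functor recorded in \Cref{rem:periodification} and to compare it, term by term, with the even-periodic graded ring $E_*$; since both sides carry the zero differential, the whole statement reduces to an identification of graded comodules together with their quasi-periodicity data.

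Recall from \Cref{rem:periodification} that if a comodule $M$ is regarded as a chain complex concentrated in degree $0$ with zero differential, then $P(M)$ is the complex with degree-$t$ part $\bigoplus_{k\in\bbZ}M_{t+2k}\otimes L^{\otimes k}$ (here $N=2$ and $L=E_2$), and its differential is assembled out of $d_M$, hence vanishes. Taking $M=E_0$, only the summand with $t+2k=0$ survives, so $P(E_0)$ is the complex with trivial differential whose degree-$t$ part is $L^{\otimes -t/2}$ for $t$ even and $0$ for $t$ odd. On the other hand, even-periodicity of $E_*$ provides isomorphisms $E_2\otimes_{E_0}E_{2m}\xrightarrow{\sim}E_{2(m+1)}$ and $E_2\otimes_{E_0}E_{-2}\xrightarrow{\sim}E_0$, and these are maps of $E_0E$-comodules because $E_*$ is a comodule algebra; hence $E_{2m}\cong L^{\otimes m}$ as comodules and $E_{2m+1}=0$. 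Placing $E_*$ in the grading for which it becomes a quasi-periodic complex for $(L,2)$ (concretely, $E_{-t}$ in chain degree $t$, with trivial differential), we see that $E_*$ and $P(E_0)$ have the same underlying graded comodule and the same (zero) differential.

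It then remains to match the quasi-periodicity isomorphisms: the tautological one $L\otimes P(E_0)\xrightarrow{\sim}P(E_0)[2]$ of \Cref{rem:periodification}, which on the $k$-th summand is the reindexing $k\mapsto k+1$, is carried under the identification above to the isomorphism $L\otimes E_*\xrightarrow{\sim}E_*[2]$ given by multiplication by a generator of $L=E_2$; this is a direct check. The statement for $E_0E$ follows in exactly the same way: the degree-$t$ part of $P(E_0E)$ is $E_0E\otimes_{E_0}L^{\otimes -t/2}$ for $t$ even, while $E_*E$ is a graded $E_*$-algebra that is flat over $E_*$, hence even-periodic with $(E_*E)_{2m}\cong E_0E\otimes_{E_0}E_{2m}\cong E_0E\otimes_{E_0}L^{\otimes m}$ as comodules, and one concludes as before. (Alternatively, the formula for $P$ gives $P(E_0E)\simeq P(E_0)\otimes_{E_0}E_0E$, while even-periodicity gives $E_*\otimes_{E_0}E_0E\cong E_*E$.)

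The only point that needs attention is the bookkeeping: one must consistently keep track of the homological-versus-cohomological grading convention and of the twist in the invertible comodule $L$, making sure in particular that the identifications $E_{2m}\cong L^{\otimes m}$ are carried out in $\Comod_{E_0E}$ and not merely in $\Mod_{E_0}$. Once this is arranged there is no further content — the lemma is a direct consequence of the definitions.
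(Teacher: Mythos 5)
Your argument is correct and is essentially the paper's own proof, which simply observes that by definition $P(E_0) \cong \bigoplus_{k \in \mathbb{Z}} (E_0 \otimes L^{\otimes k}[-2k]) \cong E_*$ by even-periodicity, with "a similar argument" for $E_0E$. You have merely unwound the same computation in more detail (including the grading bookkeeping and the comodule-level identification $E_{2m}\cong L^{\otimes m}$, which the paper leaves implicit).
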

\begin{proof}
	By definition $P(E_0) \cong \bigoplus_{k \in \mathbb{Z}} (E_0 \otimes L^{\otimes k}[-2k]) \cong E_*$ because $E$ is even-periodic. A similar argument work for $E_0E$. 
\end{proof}

\begin{Thm}\label{thm:descent_franke}
		Let $E$ be an even-periodic $p$-local Landweber exact (over $BP_*$) homology theory of height $n$, and let $L \coloneqq \pi_2(E)$. Suppose $p > n+1$, then $(E_0,E_0E)$ is a descendable Hopf algebroid, and hence there is an equivalence of symmetric monoidal stable $\infty$-categories
\[
\dcat^{per}(\Comod_{E_0E}) \simeq \Tot\left(\!\begin{tikzcd}
	{\dcat^{per}(\Mod_{E_0})} & {\dcat^{per}(\Mod_{E_0E})} & \cdots
	\arrow[shift left=1, from=1-2, to=1-3]
	\arrow[shift right=1, from=1-1, to=1-2]
	\arrow[shift left=1, from=1-1, to=1-2]
	\arrow[shift right=1, from=1-2, to=1-3]
	\arrow[from=1-2, to=1-3]
\end{tikzcd} \right)
\]

\end{Thm}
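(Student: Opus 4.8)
The plan is to read the displayed equivalence off \Cref{thm:descent_hopf}, applied to the Hopf algebroid $(A,\Gamma)=(E_0,E_0E)$ with invertible comodule $L=\pi_2(E)$ and period $N=2$. Unwinding \Cref{def:franke}, one has $\dcat^{(L,2)}(\Comod_{E_0E})=\dcat^{per}(\Comod_{E_0E})$ and, for each $k$, $\dcat^{(\Gamma^{\otimes k}\otimes L,2)}(\Mod_{\Gamma^{\otimes k}})=\dcat^{per}(\Mod_{E_0E^{\otimes k}})$, so that the cosimplicial diagram produced by \Cref{thm:descent_hopf} is exactly the one in the statement. Hence the entire content of the theorem is the single assertion that $(E_0,E_0E)$ is a \emph{descendable} Hopf algebroid when $p>n+1$, i.e.\ that $\thickt{\Gamma}=\dcat(\Comod_{E_0E})$ for $\Gamma=\epsilon^*(E_0)$.

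To verify descendability I would use the standard criterion of Mathew \cite{Mathew2016Galois} (cf.\ also \cite{MathewNaumannNoel2017Nilpotence}): since $(E_0,E_0E)$ is an Adams Hopf algebroid, $\dcat(\Comod_{E_0E})$ is generated under colimits by its dualizable objects, and so $\Gamma$ generates the thick $\otimes$-ideal as soon as the $\Gamma$-based (algebraic) Adams spectral sequence admits a horizontal vanishing line. Concretely, for $X\in\dcat(\Comod_{E_0E})$ the associated descent spectral sequence has the shape $E_2^{s,t}=\Ext^{s}_{E_0E}(E_0,\pi_t X)\Rightarrow\pi_{t-s}\Map_{\dcat(\Comod_{E_0E})}(E_0,X)$, so it is enough to produce a single integer $d$ with $\Ext^{s}_{E_0E}(E_0,M)=0$ for all $s>d$ and all comodules $M$ — equivalently, that $\Comod_{E_0E}$ has finite cohomological dimension. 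A uniform bound of this form forces the cobar tower to converge after finitely many stages, which is precisely descendability.

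It remains to bound the cohomological dimension of $\Comod_{E_0E}$, and this is the only place where $p>n+1$ is used. By \Cref{ex:morava_e-theory} (passing to the degree-$0$ Hopf algebroid) there is a symmetric monoidal equivalence $\Comod_{E_0E}\simeq\QCoh(\mathcal{M}_{\mathrm{fg}}^{\le n})$, so the problem becomes a bound on the cohomology of the stack $\mathcal{M}_{\mathrm{fg}}^{\le n}$. I would run the height stratification: the closed substacks $\mathcal{M}_{\mathrm{fg}}^{\le n}\supset\mathcal M(1)\supset\cdots\supset\mathcal M(n)$ cut $\mathcal{M}_{\mathrm{fg}}^{\le n}$ into $n+1$ locally closed strata $\mathcal H(i)=\mathcal M(i)\cap\mathcal{M}_{\mathrm{fg}}^{\le n}$, and the associated recollements exhibit $\dcat(\QCoh(\mathcal{M}_{\mathrm{fg}}^{\le n}))$ as an iterated gluing, in $n+1$ steps, of the categories $\dcat(\QCoh(\mathcal H(i)))$. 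After base change to a perfect field, $\mathcal H(i)$ is the classifying stack of the automorphism group scheme of a formal group of height $i$, whose cohomology is the continuous cohomology of the extended Morava stabilizer group $\mathbb{G}_i=\mathbb{S}_i\rtimes\Gal(\overline{\mathbb{F}}_p/\mathbb{F}_p)$; since $p-1>n\ge i$ we have $p-1\nmid i$, so $\mathbb{S}_i$ is $p$-torsion-free, hence a $p$-adic Poincaré-duality group of dimension $i^2$, and $\mathbb{G}_i$ has cohomological dimension at most $i^2+1$. Each of the $n$ gluing steps raises the cohomological dimension by at most one, so $\Comod_{E_0E}$ has cohomological dimension at most $n^2+n+1$; in particular it is finite. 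Feeding this into the previous paragraph gives descendability, and \Cref{thm:descent_hopf} then yields the stated equivalence.

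The main obstacle is the cohomological-dimension estimate for $\mathcal{M}_{\mathrm{fg}}^{\le n}$: the stratification-and-gluing argument requires identifying the strata $\mathcal H(i)$ with classifying stacks of automorphism groups and knowing that these groups become $p$-torsion-free exactly in the range $p>n+1$, so that their cohomology vanishes in a finite range. This is essentially the input used by Barthel--Schlank--Stapleton in their descent result, and one could instead simply cite \cite{BarthelSchlankStapleton2020Chromatic} for the descendability of $(E_0,E_0E)$ (cf.\ \Cref{rem:bss_descent_compare}); everything else is formal given \Cref{thm:descent_hopf}.
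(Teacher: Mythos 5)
Your proposal is correct and takes essentially the same route as the paper: the proof there likewise reduces everything to the descendability of $(E_0,E_0E)$, invokes \Cref{thm:descent_hopf} together with \Cref{def:franke} to unwind the cosimplicial diagram, and simply cites \cite[Lemma 5.30]{BarthelSchlankStapleton2020Chromatic} or \cite[Remark 4.14]{BarthelHeard2018Algebraic} for descendability. Your extra sketch of the finite-cohomological-dimension argument via the height stratification is essentially the content of those cited lemmas (and any finite bound, e.g.\ your $n^2+n+1$ versus the sharp $n^2+n$, suffices for descendability), so nothing is missing.
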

\begin{proof}
For the claim that $E_0E \in \cat D(\Comod_{E_0E})$ is descendable, see \cite[Lemma 5.30]{BarthelSchlankStapleton2020Chromatic} or \cite[Remark 4.14]{BarthelHeard2018Algebraic} (note that the property of being a descendable Hopf algebroid is preserved under the equivalence of categories of \Cref{thm:naumann}). The equivalence is then a consequence of \Cref{thm:descent_hopf} and the definitions (\Cref{def:franke}). 
\end{proof}
\begin{Rem}\label{Rem:endomorphism_spectrum}
	Suppose $\cat C$ is a compactly generated symmetric monoidal stable $\infty$-category generated by its tensor unit $\unit$. Let $E \in \CAlg(\cat C)$, then $\Mod_{\cat C}(E)$ is compactly generated by $E$. By adjunction, we see that the endomorphism spectrum $\Hom_{\Mod_{C}(E)}(E,E)$ then satisfies 
	\[
\pi_*\Hom_{\Mod_{\cat C}(E)}(E,E) \cong \pi_*\Hom_{\cat C}(\unit,E)
	\]
	Applying this with $\cat C = \dcat^{per}(\Mod_{E_0E^{\otimes k}})$ (which is compactly generated by its tensor unit and $E = P(E_0E^{\otimes k}) \cong E_*E^{\otimes k}$, we see that 
	\[
\pi_*\Hom_{\dcat^{per}(\Mod_{E_0E^{\otimes k}})}(P(E_0E^{\otimes k}),P(E_0E^{\otimes k})) \cong E_*E^{\otimes k}
	\]
\end{Rem}
\begin{Rem}\label{rem:bss_descent_compare}
	The description of $\dcat^{per}(\Comod_{E_0E})$ is given in a slightly different form in \cite{BarthelSchlankStapleton2020Chromatic}. We explain the connection here. 
	We will use the notation from \cite[Definition 4.2]{BarthelSchlankStapleton2020Chromatic}: for a spectrum $X$, we let 
	\[
X_{\star} \coloneqq H\pi_*(X),
	\]
	where $H \colon \GrAb \to \Sp$ is the functor from $\mathbb{Z}$-graded abelian groups to spectra, given by taking the generalized Eilenberg--Maclane spectrum. 

Using Schwede--Shipley Morita theory (\cite[Theorem 3.3.3]{SchwedeShipley2003Stable} and \cite[Theorem 7.1.2.1]{lurie-higher-algebra}), and arguing as in the previous remark (or applying \cite[Lemma 5.32]{BarthelSchlankStapleton2020Chromatic}) we have 
\[
\dcat^{per}(\Mod_{E_0E^{\otimes k}}) \cong \Mod_{(E^{\wedge {k+1}})_{\star}}. 
\]

	Then, we have the claimed result: there is a symmetric monoidal equivalence of stable $\infty$-categories
\[
\dcat^{per}(\Comod_{E_0E}) \simeq \Tot\left(\!\begin{tikzcd}
	\Mod_{E_{\star}} & \Mod_{(E \wedge E)_{\star}} & \cdots
	\arrow[shift left=1, from=1-2, to=1-3]
	\arrow[shift right=1, from=1-1, to=1-2]
	\arrow[shift left=1, from=1-1, to=1-2]
	\arrow[shift right=1, from=1-2, to=1-3]
	\arrow[from=1-2, to=1-3]
\end{tikzcd} \right)
\]
This is implicit, although not explicitly stated, in \cite[Section 5]{BarthelSchlankStapleton2020Chromatic}. 
\end{Rem}
\begin{Rem}
    One also has a similar result for the Hopf algebroid appearing in \Cref{exa:height_n}. More generally, we have the following:
\end{Rem}
\begin{Thm}\label{thm:descent_mrw}
        Let $K$ be an even-periodic $p$-local Landweber exact (over $BP_*/I_n$) homology theory of height $n$, let $(K,\Sigma)$ be the associated Hopf algebroid, and let $L \coloneqq \pi_2(K)$. Suppose $p-1 \nmid n$, then $(K_0,\Sigma_0)$ is a descendable Hopf algebroid, and hence there is an equivalence of symmetric monoidal stable $\infty$-categories
\[
\dcat^{per}(\Comod_{\Sigma_0}) \simeq \Tot\left(\!\begin{tikzcd}
    {\dcat^{per}(\Mod_{K_0})} & {\dcat^{per}(\Mod_{\Sigma_0})} & \cdots
    \arrow[shift left=1, from=1-2, to=1-3]
    \arrow[shift right=1, from=1-1, to=1-2]
    \arrow[shift left=1, from=1-1, to=1-2]
    \arrow[shift right=1, from=1-2, to=1-3]
    \arrow[from=1-2, to=1-3]
\end{tikzcd} \right)
\]
\end{Thm}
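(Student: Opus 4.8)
The plan is to mimic the proof of \Cref{thm:descent_franke}: once we know that $(K,\Sigma)$ is a descendable Hopf algebroid in the sense of \Cref{def:descendable_ha}, the displayed totalization is immediate from \Cref{thm:descent_hopf} together with the definitions in \Cref{def:franke} (applied with $N=2$ and $L=\pi_2(K)$, and with the convention $K^{\otimes 0}=K$ for the bottom term). So the entire content is the descendability claim, namely that the thick tensor ideal generated by $\Sigma\in\CAlg(\dcat(\Comod_{\Sigma}))$ is all of $\dcat(\Comod_{\Sigma})$.

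First I would reduce to a standard model. Any two even-periodic $p$-local Landweber exact homology theories of height $n$ over $BP_*/I_n$ have symmetric monoidally equivalent comodule categories by Naumann's theorem (\Cref{thm:naumann}), and --- as already invoked in the proof of \Cref{thm:descent_franke} --- the property of being a descendable Hopf algebroid is preserved under this equivalence; the identifications of $L=\pi_2$ and of the periodization functors are likewise unaffected (as in the remark preceding \Cref{def:franke}). Hence it suffices to treat the model $(K_{n,*},\Sigma_{n,*})$ of \Cref{exa:height_n}, for which $\Comod_{\Sigma}\simeq\QCoh(\mathcal H(n))$.

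Next I would feed in the vanishing line recorded in \Cref{exa:height_n}. Via the Miller--Ravenel \cite[Theorem 2.10]{MillerRavenel1977Morava} and Morava change of rings isomorphisms one has $\Ext^{s,t}_{\Sigma_{n,*}}(K_{n,*},M)\cong H^s(\mathbb{G}_n,M)$, and since $(p-1)\nmid n$ the group $\mathbb{S}_n$ (hence $\mathbb{G}_n$, the Galois part having order prime to $p$) is $p$-torsion free of $p$-cohomological dimension $n^2$, so these groups vanish for $s>n^2$ for \emph{every} comodule $M$ (Morava's theorem \cite[Theorem 6.2.10]{Ravenel1986Complex} records the case of the unit; every $\Sigma_{n,*}$-comodule is an $\mathbb{F}_p$-vector space, so the general case follows from bounded $p$-cohomological dimension). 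This horizontal vanishing line is exactly the input for Mathew's descent criterion: it forces the cobar/Amitsur tower $K\to\Sigma\to\Sigma^{\otimes 2}\to\cdots$ computing $\Ext^*_{\Sigma}(K,-)$ to converge after finitely many stages, so the fiber of $K\to\Sigma$ is $\otimes$-nilpotent and $\Sigma$ is descendable (\cite[Definition 3.18 and Proposition 3.20]{Mathew2016Galois}). Alternatively one may simply cite the corresponding descendability statement for the height-$n$ stratum, e.g.\ from \cite{barthel2021morava} or the evident analogue of the reference used in \Cref{thm:descent_franke} (namely \cite[Remark 4.14]{BarthelHeard2018Algebraic}).

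The main obstacle is precisely this last step: upgrading the $\Ext$-vanishing line --- cleanly available from Morava's original computation only for the unit --- to honest descendability of the algebra object $\Sigma$. The clean route is to observe that the vanishing propagates to all comodules through the identification with continuous $\mathbb{G}_n$-cohomology (using $(p-1)\nmid n$ to kill $p$-torsion in $\mathbb{G}_n$ and thereby bound its cohomological dimension by $n^2$), and then to apply Mathew's characterization of descendability via the bounded convergence of the Amitsur tower. One must also keep careful track that the groups $\Ext^*_{\Sigma}(K,\Sigma^{\otimes\bullet})$ genuinely compute the relevant mapping spectra in $\dcat(\Comod_{\Sigma})$, so that the vanishing line controls the totalization tower appearing in \Cref{thm:descent_hopf}.
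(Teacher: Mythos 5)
Your proposal matches the paper's proof in structure: the paper likewise deduces the displayed totalization from \Cref{thm:descent_hopf} once descendability of $(K,\Sigma)$ is known, and for the latter it simply cites \cite[Lemma 4.5]{BarthelHeard2018Algebraic} --- exactly the alternative you offer --- rather than unpacking the vanishing-line argument. One small caveat in your expanded sketch: when $p\mid n$ (which is not excluded by $p-1\nmid n$ alone) the Galois factor of $\mathbb{G}_n$ does \emph{not} have order prime to $p$, but its higher cohomology still vanishes on the relevant coefficients (these are induced $\Gal(\bbF_{p^n}/\bbF_p)$-modules by the normal basis theorem), so the $s>n^2$ vanishing line survives and your argument goes through.
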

\begin{proof}
    The descendability of the Hopf algebroid $(K_0,\Sigma_0)$ is a consequence of \cite[Lemma 4.5]{BarthelHeard2018Algebraic}; now apply \Cref{thm:descent_hopf}. 
\end{proof}
\section{Picard groups}
In this section, we turn to our first application of descent. We recall that an object $M$ in a symmetric monoidal category is \emph{invertible} if there exists an object $M^{-1}$ such that $M \otimes M^{-1} \simeq \unit$. 
\begin{Def}
	For a symmetric monoidal presentable stable $\infty$-category $\cat C$, we let $\Pic(\cat C)$ denote the group of isomorphism classes of invertible objects in $\Ho(\cat C)$. 
\end{Def}
\begin{Rem}\label{rem:picard_groups_chromatic}
    The idea of studying Picard groups of localized categories of spectra was begun by Hopkins \cite{HopkinsMahowaldSadofsky1994Constructions}, who studied the Picard group of $K(n)$-locally invertible spectra. Hovey and Sadofsky then studied the Picard group of the category of $E(n)$-local spectra \cite{HoveySadofsky1999Invertible}. Their Theorem A shows that for $2p-2 > n^2+n$, we have $\Pic(\Sp_{E(n)}) \cong \mathbb{Z}$, generated by $ L_n S^1$. Given \Cref{rem:algebraic_results}, our main result in this section, \Cref{thm:hovey_strickland_local}, is a direct analog of this theorem. 
\end{Rem}
\begin{Rem}\label{rem:picard_spectrum_properties}
	We recall from \cite{MathewStojanoska2016Picard} that to a symmetric monoidal presentable stable $\infty$-category $\cat C$ we can associate a connective spectrum $\pics(\cat C)$ whose homotopy groups are given by the following:
	\begin{equation}\label{pic:defn}
	\pi_t\pics(\cat C) \cong
	\begin{cases}
		\Pic(\cat C) & t = 0, \\
		\pi_0\End_{\cat C}(\unit_{\cat C},\unit_{\cat C})^{\times} & t =1 \\
		\pi_{t-1}\End_{\cat C}(\unit_{\cat C},\unit_{\cat C}) & t \ge 2. 
	\end{cases}
	\end{equation}
	Moreover, as a functor $\Cat^{\otimes} \to \Sp_{\ge 0}$ from the $\infty$-category of symmetric monoidal stable $\infty$-categories to the $\infty$-category $\Sp_{\ge 0}$ of connective spectra, $\pics$ commutes with limits \cite[Proposition 2.2.3]{MathewStojanoska2016Picard}. 
\end{Rem}
    \begin{Prop}\label{prop:totalization_pic_descendable}
    Suppose $(A,\Gamma)$ is a descendable Hopf algebroid, and $L$ is an invertible $\Gamma$-comodule, then there is an equivalence of connective spectra
\[
\resizebox{\columnwidth}{!}{$\displaystyle
\pics(\dcat^{(L,N)}(\Comod_{\Gamma}))\simeq \tau_{\ge 0}\Tot\left(
\begin{tikzcd}[ ampersand replacement=\&, column sep=1em]
{\pics(\dcat^{(L,N)}(\Mod_{A}))}\&{\pics(\dcat^{(\Gamma \otimes L,N)}(\Mod_{\Gamma}))} \& \cdots 
    \arrow[shift left=1, from=1-2, to=1-3]
    \arrow[shift right=1, from=1-1, to=1-2]
    \arrow[shift left=1, from=1-1, to=1-2]
    \arrow[shift right=1, from=1-2, to=1-3]
    \arrow[from=1-2, to=1-3]
\end{tikzcd} \right)
$}
\]
\end{Prop}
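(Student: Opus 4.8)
The plan is to obtain the Picard spectrum formula by applying the functor $\pics$ directly to the descent equivalence of \Cref{thm:descent_hopf} and then exploiting that $\pics$ commutes with limits. First I would invoke \Cref{thm:descent_hopf}: since $(A,\Gamma)$ is descendable, it exhibits $\dcat^{(L,N)}(\Comod_{\Gamma})$ as the totalization (limit over $\Delta$) of the cosimplicial diagram
\[
\dcat^{(L,N)}(\Mod_{A}) \rightrightarrows \dcat^{(\Gamma \otimes L,N)}(\Mod_{\Gamma}) \rightthreearrow \cdots,
\]
whose term in cosimplicial degree $k$ is $\dcat^{(\Gamma^{\otimes k}\otimes L,N)}(\Mod_{\Gamma^{\otimes k}})$ and whose coface and codegeneracy maps are the base-change functors of the Amitsur complex of $P_{\Gamma}(\Gamma)$; by \Cref{prop:free_forget_descendable,prop:module_categories_descendable,prop:descendable_module_Cats2} each of these functors is a symmetric monoidal, colimit-preserving functor between presentable stable $\infty$-categories, so this is a genuine limit diagram in the $\infty$-category on which $\pics$ is defined.

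Next I would apply $\pics$. By \Cref{rem:picard_spectrum_properties} (citing \cite[Proposition 2.2.3]{MathewStojanoska2016Picard}) the Picard spectrum functor, viewed as landing in connective spectra, commutes with limits, so $\pics(\dcat^{(L,N)}(\Comod_{\Gamma}))$ is identified with the limit over $\Delta$, computed in $\Sp_{\ge 0}$, of the cosimplicial connective spectrum $k \mapsto \pics(\dcat^{(\Gamma^{\otimes k}\otimes L,N)}(\Mod_{\Gamma^{\otimes k}}))$. The remaining step is purely formal: the inclusion $\Sp_{\ge 0} \hookrightarrow \Sp$ is left adjoint to $\tau_{\ge 0}$, hence $\tau_{\ge 0}$ preserves limits, and for any diagram already valued in connective spectra a limit computed in $\Sp_{\ge 0}$ agrees with $\tau_{\ge 0}$ applied to the limit computed in $\Sp$. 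Spelling out the first two cosimplicial levels, namely $\dcat^{(L,N)}(\Mod_A)$ in degree $0$ and $\dcat^{(\Gamma \otimes L,N)}(\Mod_{\Gamma})$ in degree $1$, then reproduces exactly the displayed formula.

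The main obstacle I expect is not homotopy-theoretic but a matter of bookkeeping: one must check that the cosimplicial object produced by \Cref{thm:descent_hopf} is literally a diagram of presentable symmetric monoidal stable $\infty$-categories with colimit-preserving symmetric monoidal functors — the precise domain of $\pics$ in \cite{MathewStojanoska2016Picard} — and that the totalization appearing there is a limit taken in this $\infty$-category (equivalently, after Lurie, in $\Cat_\infty$), so that limit-preservation of $\pics$ genuinely applies. This is precisely what the module-category presentations of \Cref{prop:descendable_module_Cats2} and its predecessors supply, since extension of scalars is automatically a colimit-preserving symmetric monoidal left adjoint. The truncation subtlety, by contrast, is dispatched once and for all by the coreflective-subcategory argument above, so no further work is needed there.
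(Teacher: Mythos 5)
Your proposal is correct and is essentially identical to the paper's argument, which likewise deduces the statement by applying $\pics$ to the descent equivalence of \Cref{thm:descent_hopf} and invoking the fact that $\pics$ commutes with limits (\Cref{rem:picard_spectrum_properties}). Your additional remarks on the $\tau_{\ge 0}$ and on the domain of $\pics$ are accurate elaborations of the same one-line proof rather than a different route.
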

\begin{proof}
    This follows from \Cref{thm:descent_hopf} and the fact that $\pics$ commutes with limits. 
\end{proof}
Applying this to \Cref{thm:descent_franke} we get the following:
	\begin{Prop}\label{prop:totalization_pic}
Let $E$ be an even-periodic $p$-local Landweber exact (over $BP_*$) homology theory of height $n$, and let $L \coloneqq \pi_2(E)$, then for $p > n+1$ there is an equivalence of connective spectra
\[
\resizebox{\columnwidth}{!}{$\displaystyle
\pics(\dcat^{per}(\ComodE))\simeq \tau_{\ge 0}\Tot\left(
\begin{tikzcd}[ ampersand replacement=\&, column sep=1em]
{\pics(\dcat^{per}(\Mod_{E_0}))}\&{\pics(\dcat^{per}(\Mod_{E_0E}))} \& \cdots 
    \arrow[shift left=1, from=1-2, to=1-3]
    \arrow[shift right=1, from=1-1, to=1-2]
    \arrow[shift left=1, from=1-1, to=1-2]
    \arrow[shift right=1, from=1-2, to=1-3]
    \arrow[from=1-2, to=1-3]
\end{tikzcd} \right)
$}
\]
\end{Prop}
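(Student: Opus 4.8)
The plan is to obtain this as a direct specialization of \Cref{prop:totalization_pic_descendable}. First I would invoke \Cref{thm:descent_franke}: under the hypothesis $p > n+1$, the Landweber exact Hopf algebroid $(E_0, E_0E)$ is descendable in the sense of \Cref{def:descendable_ha}, and $L = \pi_2(E) = E_2$ is an invertible $E_0E$-comodule because $E$ is even-periodic. Thus the hypotheses of \Cref{prop:totalization_pic_descendable} are satisfied with $(A,\Gamma) = (E_0, E_0E)$ and $N = 2$.

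Applying that proposition then yields an equivalence of connective spectra
\[
\pics(\dcat^{(L,2)}(\ComodE)) \simeq \tau_{\ge 0}\Tot\left(\pics(\dcat^{(L,2)}(\Mod_{E_0})) \rightrightarrows \pics(\dcat^{(E_0E \otimes L,2)}(\Mod_{E_0E})) \rightthreearrow \cdots\right).
\]
It then remains only to rewrite the terms using \Cref{def:franke}: by definition $\dcat^{(L,2)}(\ComodE) = \dcat^{per}(\ComodE)$, and more generally the $k$-th cosimplicial term $\dcat^{(E_0E^{\otimes k} \otimes L, 2)}(\Mod_{E_0E^{\otimes k}})$ coincides with $\dcat^{per}(\Mod_{E_0E^{\otimes k}})$; in particular the first two terms displayed are $\pics(\dcat^{per}(\Mod_{E_0}))$ and $\pics(\dcat^{per}(\Mod_{E_0E}))$, as required.

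Since this is a bookkeeping exercise built on the descent equivalence of \Cref{thm:descent_franke} together with the fact that $\pics$ preserves limits (\Cref{rem:picard_spectrum_properties}), I do not anticipate any genuine obstacle. The only point requiring a moment's care is confirming that the cosimplicial structure maps on the Picard spectra are exactly those induced by the descent diagram; this is automatic, since both \Cref{prop:totalization_pic_descendable} and \Cref{def:franke} are phrased in terms of the same periodification functors $P$, so the identifications above are compatible with the cosimplicial face and degeneracy maps.
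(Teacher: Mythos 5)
Your proposal is correct and follows the same route as the paper: \Cref{thm:descent_franke} supplies the descendability of $(E_0,E_0E)$ for $p>n+1$, \Cref{prop:totalization_pic_descendable} (i.e.\ \Cref{thm:descent_hopf} plus the fact that $\pics$ commutes with limits) gives the totalization, and \Cref{def:franke} identifies the cosimplicial terms. No issues.
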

\begin{Rem}\label{rem:invertible_cohomology}
  Using descent (in particular, combining \cite[Proposition 3.19]{Mathew2016Galois} and \cite[Lemma 6.1]{heard_kn}) one deduces that $X \in \dcat^{per}(\ComodE)$ is invertible if and only if $P_{\Gamma}(E_0) \otimes X$ is invertible in $\Mod_{\dcat^{per}(\ComodE)}(P_{\Gamma}(E_0))$, or equivalently (via \Cref{prop:module_categories_descendable}) $\rho_*X \in \dcat^{per}(\Mod_{E_0})$ is invertible.  We shall see in a moment that $\Pic(\dcat^{per}(\Mod_{E_0})) \cong \mathbb{Z}/2$ generated by $P(E_0)[1] \cong E_*[1]$. Together, 
  \[
X \in \Pic(\dcat^{per}(\ComodE)) \iff H_*(\rho_*(X)) \cong E_* \text{ (up to a shift)}.
  \]
  This is the analog of the result, implicit in Hovey--Sadofsky \cite{HoveySadofsky1999Invertible}
  \[
X \in \Pic(\Sp_n) \iff \pi_*(E \wedge X) \cong E_* \text{ (up to a shift)}. 
  \]
\end{Rem}
We first study the spectral sequence associated to $\pics(\dcat^{per}(\Comod_{E_0E}))$. 
\begin{Thm}\label{cor:en_ss}
With $E$ as in the previous proposition, suppose $p > n+1$, then there is a spectral sequence
	\[
E_2^{s,t} \cong \begin{cases}
	\mathbb{Z}/2 & s = t = 0 \\
	H^s(\mathcal{M}_{fg}^{\le n},\mathcal{O}_{\mathcal{M}_{fg}}^{\times}) & t = 1\\
	\Ext_{E_*E}^{s,t-1}(E_*,E_*) & t \ge 2. 
\end{cases}
	\]
	which converges for $t -s \ge 0$ to $\pi_{t-s}\pics(\dcat^{per}(\Comod_{E_0E}))$. The differentials run $d_r \colon E_r^{s,t} \to E_r^{s+r,t+r-1}$. 
\end{Thm}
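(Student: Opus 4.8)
The strategy is to apply the Picard-spectrum functor to the descent equivalence of \Cref{thm:descent_franke} and run the associated Bousfield--Kan spectral sequence. \Cref{thm:descent_franke} presents $\dcat^{per}(\Comod_{E_0E})$ as the totalization of the cosimplicial $\infty$-category $[k]\mapsto \dcat^{per}(\Mod_{E_0E^{\otimes k}})$ (with $E_0E^{\otimes 0}=E_0$), and \Cref{prop:totalization_pic}---using that $\pics$ preserves limits---gives
\[
\pics(\dcat^{per}(\Comod_{E_0E}))\;\simeq\;\tau_{\ge 0}\Tot\big(C^\bullet\big),\qquad C^k:=\pics(\dcat^{per}(\Mod_{E_0E^{\otimes k}})),
\]
where the cosimplicial structure maps of $C^\bullet$ come from the base-change functors along the cobar maps $E_0E^{\otimes k}\to E_0E^{\otimes(k+1)}$. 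I would then invoke the Bousfield--Kan spectral sequence of $C^\bullet$: it has $E_2^{s,t}\cong\pi^s\big([k]\mapsto \pi_tC^k\big)$, differentials $d_r\colon E_r^{s,t}\to E_r^{s+r,t+r-1}$, and conditionally converges to $\pi_{t-s}\Tot(C^\bullet)$. Since every $C^k$ is connective, this is a right-half-plane spectral sequence with the standard fringing near the edge, and the tower of partial totalizations has the connectivity needed for genuine convergence; because $\tau_{\ge 0}$ leaves $\pi_{t-s}$ unchanged for $t-s\ge 0$, the spectral sequence converges to $\pi_{t-s}\pics(\dcat^{per}(\Comod_{E_0E}))$ in that range, exactly as in the treatment of the descent/Picard spectral sequence in \cite{MathewStojanoska2016Picard}.

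It then remains to identify the $E_2$-term, i.e.\ the cohomotopy of the cosimplicial abelian groups $[k]\mapsto \pi_tC^k$. The homotopy of each $C^k$ is given by \Cref{rem:picard_spectrum_properties} together with the computation $\pi_*\End_{\dcat^{per}(\Mod_{E_0E^{\otimes k}})}(\unit)\cong E_*E^{\otimes k}$ of \Cref{Rem:endomorphism_spectrum}; since $E_*E^{\otimes k}$ is even and $2$-periodic, this yields $\pi_tC^k\cong (E_*E^{\otimes k})_{t-1}$ for $t\ge 2$, $\pi_1C^k\cong (E_0E^{\otimes k})^{\times}$, and $\pi_0C^k\cong \Pic(\dcat^{per}(\Mod_{E_0E^{\otimes k}}))$. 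For the last group I would argue exactly as for $k=0$ in \Cref{rem:invertible_cohomology}: via Schwede--Shipley Morita theory (\Cref{rem:bss_descent_compare}), $\dcat^{per}(\Mod_{E_0E^{\otimes k}})$ is the derived category of graded modules over the even $2$-periodic graded ring $E_*E^{\otimes k}$, whose spectrum is connected since $E_0$ is local, so its Picard group is $\mathbb{Z}/2$, generated by the suspension of the unit. As the cosimplicial coface maps are symmetric monoidal base-change functors, they carry this generator to the generator, so $[k]\mapsto \pi_0C^k$ is the \emph{constant} cosimplicial group $\mathbb{Z}/2$.

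Finally one reads off the cohomotopy. The constant cosimplicial group $\mathbb{Z}/2$ on the row $t=0$ has cohomotopy $\mathbb{Z}/2$ for $s=0$ and $0$ for $s>0$, giving the first case. On the rows $t\ge 2$, the cosimplicial group $[k]\mapsto (E_*E^{\otimes k})_{t-1}$ is the internal-degree-$(t-1)$ part of the (unnormalised) cobar complex of the Hopf algebroid $(E_*,E_*E)$, so its $s$-th cohomotopy is $\Ext^{s,t-1}_{E_*E}(E_*,E_*)$ by the standard cobar computation of comodule Ext. On the row $t=1$, $[k]\mapsto (E_0E^{\otimes k})^{\times}$ is the multiplicative cobar complex of $(E_0,E_0E)$, i.e.\ the Čech complex of the sheaf $\mathcal{O}^{\times}$ for the faithfully flat presentation $\Spec E_0\to \mathcal{M}_{fg}^{\le n}$ (Landweber exactness provides the flatness, and \Cref{thm:naumann}/\Cref{ex:morava_e-theory} identify $\Comod_{E_0E}\simeq\QCoh(\mathcal{M}_{fg}^{\le n})$, so the answer does not depend on the chosen model $E$); since $\mathcal{O}^{\times}$ satisfies flat descent, this computes $H^s(\mathcal{M}_{fg}^{\le n},\mathcal{O}_{\mathcal{M}_{fg}}^{\times})$, the remaining case. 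This establishes the stated $E_2$-page and differentials.

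The steps I expect to need the most care are, first, the convergence bookkeeping: pinning down the fringing of the Picard spectral sequence and verifying genuine convergence to $\pi_{t-s}\pics(\dcat^{per}(\Comod_{E_0E}))$ for all $t-s\ge 0$ rather than merely in a stable range; and second, the identification of the $t=1$ line with $H^\bullet(\mathcal{M}_{fg}^{\le n},\mathcal{O}^{\times})$, since for the non-quasi-coherent sheaf $\mathcal{O}^{\times}$ one must know that Čech cohomology along the single flat cover $\Spec E_0\to\mathcal{M}_{fg}^{\le n}$ agrees with derived-functor (stack) cohomology, and the Picard computations $\Pic(\dcat^{per}(\Mod_{E_0E^{\otimes k}}))\cong\mathbb{Z}/2$ rely on the connectedness of $\Spec E_0E^{\otimes k}$.
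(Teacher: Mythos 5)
Your proposal is correct and follows essentially the same route as the paper: the Bousfield--Kan spectral sequence of the cosimplicial Picard spectra from \Cref{prop:totalization_pic}, with $\pi_t$ identified via \Cref{Rem:endomorphism_spectrum} and \eqref{pic:defn}, the rows $t\ge 2$ recognized as the cobar complex computing $\Ext_{E_*E}$, the $t=1$ row as the units of the cobar complex for the presentation $\Spec E_0\to\mathcal{M}_{fg}^{\le n}$, and $E_2^{0,0}$ as $\Pic(\dcat^{per}(\Mod_{E_0}))\cong\mathbb{Z}/2$ (the paper cites Baker--Richter for this last point rather than a connectedness argument, but the content is the same). The only thing you assert beyond what the theorem needs is that the entire $t=0$ row is constant $\mathbb{Z}/2$; the statement only requires the $(0,0)$ entry, so the connectedness of $\Spec E_0E^{\otimes k}$ for $k\ge 1$ need not be addressed.
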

\begin{proof}
This is the Bousfield--Kan spectral sequence associated to the totalization in \Cref{prop:totalization_pic}. Given the descent result above, the proof is in fact completely analogous to \cite[Theorem 3.4.3]{MathewStojanoska2016Picard}. 

Suppose first that $t \ge 2$. Then by \Cref{Rem:endomorphism_spectrum} we have 
\[
\begin{split}
\pi_t\pics(\dcat^{per}(\Mod_{E_0E^{\otimes k}})) &\cong \pi_{t-1} \Hom_{\dcat^{per}(\Mod_{E_0E^{\otimes k}})}(P(E_0E^{\otimes k}),P(E_0E^{\otimes k}))\\
& \cong (E_*E^{\otimes k})_{t-1}.
\end{split}
\]
 Then, the $E_2^{s,t}$-term of the spectral sequence is the $s$-th cohomology of the complex
% https://q.uiver.app/?q=WzAsNCxbMCwwLCIoRV8qKV97dC0xfSJdLFsxLDAsIihFXypFKV97dC0xfSJdLFsyLDAsIihFXypFXntcXG90aW1lcyAyfSlfe3QtMX0iXSxbMywwLCJcXGNkb3RzIl0sWzAsMSwiIiwwLHsib2Zmc2V0IjotMX1dLFswLDEsIiIsMix7Im9mZnNldCI6MX1dLFsxLDJdLFsxLDIsIiIsMix7Im9mZnNldCI6LTF9XSxbMSwyLCIiLDIseyJvZmZzZXQiOjF9XSxbMiwzLCIiLDIseyJvZmZzZXQiOi0yfV0sWzIsMywiIiwyLHsib2Zmc2V0IjoyfV0sWzIsMywiIiwyLHsib2Zmc2V0IjotMX1dLFsyLDMsIiIsMix7Im9mZnNldCI6MX1dXQ==
\[\begin{tikzcd}
	{(E_*)_{t-1}} & {(E_*E)_{t-1}} & {(E_*E^{\otimes 2})_{t-1}} & \cdots
	\arrow[shift left=1, from=1-2, to=1-3]
	\arrow[shift left=1, from=1-3, to=1-4]
	\arrow[shift right=1, from=1-3, to=1-4]
	\arrow[shift left=3, from=1-3, to=1-4]
	\arrow[shift right=1, from=1-1, to=1-2]
	\arrow[shift left=1, from=1-1, to=1-2]
	\arrow[shift right=1, from=1-2, to=1-3]
	\arrow[from=1-2, to=1-3]
	\arrow[shift right=3, from=1-3, to=1-4]
\end{tikzcd}\]
Unwinding the definition of the maps, we see that this is precisely the cohomology of the cobar complex, and so this is isomorphic to $\Ext^{s,t-1}_{E_*E}(E_*E,E_*E)$. 

In the case $t = 1$, this is the cohomology of the units of the complex
\[\begin{tikzcd}
	{E_0} & {E_0E} & {E_0E^{\otimes 2}} & \cdots
	\arrow[shift left=1, from=1-2, to=1-3]
	\arrow[shift left=1, from=1-3, to=1-4]
	\arrow[shift right=1, from=1-3, to=1-4]
	\arrow[shift left=3, from=1-3, to=1-4]
	\arrow[shift right=1, from=1-1, to=1-2]
	\arrow[shift left=1, from=1-1, to=1-2]
	\arrow[shift right=1, from=1-2, to=1-3]
	\arrow[from=1-2, to=1-3]
	\arrow[shift right=3, from=1-3, to=1-4]
\end{tikzcd}\]
Passing to the algebro-geometric approach, we note that the simplicial scheme 
% https://q.uiver.app/?q=WzAsMyxbMCwwLCJcXGNkb3RzIl0sWzEsMCwiXFxTcGVjIFxccGlfMChFX24gXFx3ZWRnZSBFX24pIl0sWzIsMCwiXFxTcGVjIFxccGlfMChFX24pIl0sWzAsMSwiIiwwLHsib2Zmc2V0IjotMX1dLFswLDEsIiIsMix7Im9mZnNldCI6MX1dLFswLDFdLFsxLDIsIiIsMSx7Im9mZnNldCI6LTF9XSxbMSwyLCIiLDEseyJvZmZzZXQiOjF9XV0=
\[\begin{tikzcd}
	\cdots & { \Spec E_0E} & {\Spec E_0}
	\arrow[shift left=1, from=1-1, to=1-2]
	\arrow[shift right=1, from=1-1, to=1-2]
	\arrow[from=1-1, to=1-2]
	\arrow[shift left=1, from=1-2, to=1-3]
	\arrow[shift right=1, from=1-2, to=1-3]
\end{tikzcd}\]
is a presentation for $\mathcal{M}_{fg}^{\le n}$. In particular, the spectral sequence has the claimed form when $t =1 $. 

In the case $s = t = 0$, this is precisely the Picard group $\Pic(\dcat^{per}(\Mod_{E_0}))$. To see that this is $\mathbb{Z}/2$, one can use that  $\dcat^{per}(\Mod_{E_0}) \simeq \Mod_{E_{\star}}$ and appeal to \cite[Theorem 8.1]{BakerRichter2005Invertible}. See also \cite[Lemma 5.33]{BarthelSchlankStapleton2020Chromatic}.
\end{proof}
\begin{Rem}
	Note that (as expected) this spectral sequence has the same $E_2$-term as the corresponding spectral sequence for the Picard spectrum of the $E(n)$-local category, see \cite[Theorem 3.4.3]{MathewStojanoska2016Picard}. Here they use the more invariant description of the $E_2$-term as 
  \[
E_2^{s,t} = H^s(\mathcal{M}_{fg}^{\le n},\omega^{(t-1)/2})
  \]
  for $t \ge 3$. 
\end{Rem}
Putting this all together, we can compute the Picard group of Franke's comodule category at large primes. 
\begin{Thm}\label{thm:hovey_strickland_local}
Let $E$ be an even-periodic $p$-local Landweber exact (over $BP_*$) homology theory of height $n$, and let $L \coloneqq \pi_2(E)$. Suppose $2p-2 > n^2+n$, then $\Pic(\dcat^{per}(\Comod_{E_0E})) \cong \mathbb{Z}$, generated by $ P(E_0)[1] \cong E_*[1]$. 
\end{Thm}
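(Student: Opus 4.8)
The plan is to extract $\Pic(\dcat^{per}(\ComodE)) \cong \pi_0\pics(\dcat^{per}(\ComodE))$ from the spectral sequence of \Cref{cor:en_ss}; concretely, I would analyse the $t=s$ diagonal of its $E_2$-page together with every differential that touches it. On this diagonal the entries are $E_2^{0,0} \cong \mathbb{Z}/2$, then $E_2^{1,1} \cong H^1(\mathcal{M}_{\mathrm{fg}}^{\le n},\mathcal{O}^{\times}) \cong \Pic(\mathcal{M}_{\mathrm{fg}}^{\le n})$, and, for $s \ge 2$, the groups $E_2^{s,s} \cong \Ext^{s,s-1}_{E_*E}(E_*,E_*)$.

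The crucial input is a vanishing statement valid in the range $2p-2 > n^2+n$. Since $E_*$ and $E_*E$ are concentrated in even internal degrees, $\Ext^{s,s-1}_{E_*E}(E_*,E_*)$ vanishes automatically when $s$ is even; and for odd $s \ge 3$ it is, by the even-grading dictionary of \Cref{cor:en_ss}, isomorphic to $H^s(\mathcal{M}_{\mathrm{fg}}^{\le n},\omega^{\otimes (s-1)/2})$, which vanishes by the sparseness of $\Ext_{E_*E}(E_*,E_*)$ --- exactly the vanishing line in the range $2p-2>n^2+n$ used by Hovey--Sadofsky \cite{HoveySadofsky1999Invertible} and, in these same degrees, by Mathew--Stojanoska \cite[Theorem 3.4.3]{MathewStojanoska2016Picard}. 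The same vanishing line together with the even-degree concentration forces every possible target of a differential out of $E_r^{0,0}$ or $E_r^{1,1}$ (subquotients of $\Ext^{r,r-2}_{E_*E}(E_*,E_*)$, resp.\ of $\Ext^{r+1,r-1}_{E_*E}(E_*,E_*)$, for $r \ge 2$) to vanish, with the single exception of the potential $d_2\colon E_2^{0,0}\to E_2^{2,1}=H^2(\mathcal{M}_{\mathrm{fg}}^{\le n},\mathcal{O}^{\times})$; and no differential can hit $E_r^{0,0}$ or $E_r^{1,1}$ since its source would lie in negative cohomological degree. Hence $E_\infty^{s,s}=0$ for $s\ge 2$ and $E_\infty^{1,1}=E_2^{1,1}=\Pic(\mathcal{M}_{\mathrm{fg}}^{\le n})$, and the filtration on $\Pic(\dcat^{per}(\ComodE))$ collapses to a short exact sequence
\[
0 \longrightarrow \Pic(\mathcal{M}_{\mathrm{fg}}^{\le n}) \longrightarrow \Pic(\dcat^{per}(\ComodE)) \longrightarrow E_\infty^{0,0} \longrightarrow 0,
\]
with $E_\infty^{0,0}$ a subquotient of $\mathbb{Z}/2$.

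Then I would feed in two facts. First, $\Pic(\mathcal{M}_{\mathrm{fg}}^{\le n})\cong\mathbb{Z}$, generated by $\omega$ (see \cite{HoveySadofsky1999Invertible}; the hypothesis on $p$ is what removes any extra contribution). Second, by descent (\Cref{rem:invertible_cohomology}) the class $P(E_0)[1]\cong E_*[1]$ maps to the nonzero element of $\Pic(\dcat^{per}(\Mod_{E_0}))\cong\mathbb{Z}/2$, so the edge map $\Pic(\dcat^{per}(\ComodE))\to E_\infty^{0,0}$ is surjective and $E_\infty^{0,0}\cong\mathbb{Z}/2$. It remains to resolve the extension $0\to\mathbb{Z}\to\Pic(\dcat^{per}(\ComodE))\to\mathbb{Z}/2\to 0$, and here one uses that $(E_*[1])^{\otimes 2}\cong E_*[2]\cong L\otimes\unit$, whose image under the edge map $\Pic(\dcat^{per}(\ComodE))\to\Pic(\mathcal{M}_{\mathrm{fg}}^{\le n})$ is the generator $[\omega]=[E_2]$. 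Thus $E_*[2]$ generates the copy of $\mathbb{Z}$ in filtration $1$; if $E_*[1]$ lay in filtration $1$ we would get $E_*[1]=m\cdot E_*[2]$ in this $\mathbb{Z}$, forcing $2m=1$, so $E_*[1]$ has filtration $0$ and maps to a generator of $E_\infty^{0,0}\cong\mathbb{Z}/2$. Therefore $\Pic(\dcat^{per}(\ComodE))$ is generated by $P(E_0)[1]$, is cyclic, and contains an element of infinite order, so it is $\cong\mathbb{Z}$ with generator $P(E_0)[1]$, as claimed.

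The step I expect to be the main obstacle is the vanishing-line bookkeeping in the precise range $2p-2>n^2+n$: verifying that all $E_2$-entries on the $t=s$ diagonal above filtration $1$ die and that no differential disturbs the $(0,0)$- and $(1,1)$-entries. This is where the numerical hypothesis on $p$ is genuinely needed, and it is the direct algebraic analogue of the degeneration of the Picard spectral sequence in the $E(n)$-local setting; by contrast, the inputs $\Pic(\mathcal{M}_{\mathrm{fg}}^{\le n})\cong\mathbb{Z}$ and the resolution of the extension are formal.
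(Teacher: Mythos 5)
Your proposal is correct and follows essentially the same route as the paper: run the Picard spectral sequence of \Cref{cor:en_ss}, use the Hovey--Sadofsky vanishing line together with sparseness to clear the $0$-stem above filtration $1$ and all relevant differentials, identify $E_\infty^{1,1}\cong\mathbb{Z}$ generated by $\omega \leftrightarrow E_*[2]$ and $E_\infty^{0,0}\cong\mathbb{Z}/2$ generated by $E_*[1]$, and resolve the extension via $(E_*[1])^{\otimes 2}\simeq E_*[2]$. Your explicit treatment of the potential $d_2\colon E_2^{0,0}\to H^2(\mathcal{M}_{\mathrm{fg}}^{\le n},\mathcal{O}^\times)$ --- killed because the generator of $E_2^{0,0}$ is realized by the actual invertible object $P(E_0)[1]$ and hence is a permanent cycle --- is only implicit in the paper, but is the same mechanism.
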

\begin{proof}
This is the claim that $\pi_0(\pics(\dcat^{per}(\Comod_{E_0E}))) \cong \mathbb{Z}$. We will prove this via the spectral sequence of \Cref{cor:en_ss} (which applies because $2p-2 > n^2+n$ implies $p> n+1$). 

By \cite[Theorem 5.1]{HoveySadofsky1999Invertible} we have 
	\[
\Ext_{E_*E}^{s,\ast}(E_*,E_*) =0 
	\]
	for $s > n^2+n$. In fact, Hovey--Sadofsky use Johnson--Wilson $E$-theory, but this does not change anything in light of \cite[Theorem C]{HoveyStrickl2005Comodules} (or \Cref{thm:naumann}). Now, a sparseness argument shows that in the stable range the spectral sequence is zero unless the internal degree is a multiple of $2(p-1)$. It follows that there is no room for non-trivial differentials or extensions in the stable range (as non-zero differentials raise filtration by a multiple of $2p-1$). 

	By \cite[Proposition 3.4.2]{MathewStojanoska2016Picard} we have $	H^1(\mathcal{M}_{fg}^{\le n},\mathcal{O}_{\mathcal{M}_{fg}}^{\times}) \cong \mathbb{Z}$, generated by the tautological line bundle $\omega$. This corresponds to $E_{\ast +2 } \simeq E_*[2]$, and hence to the periodic comodule $P(E_0)[2]$. It follows that this class survives the spectral sequence.  The copy of $\mathbb{Z}/2$ in degree $(0,0)$ corresponds to $P(E_0)[1] \cong E_*[1]$, the generator of the Picard group $\Pic(\dcat^{per}(\Mod_{E_0}))$. We therefore have a non-trivial extension
	\[
0 \to \mathbb{Z} \to \Pic(\dcat^{per}(\Comod_{E_0E})) \to \mathbb{Z}/2 \to 0,
	\]
	where the first map is multiplication by $2$. It follows that $\Pic(\dcat^{{per}}(\Comod_{E_0E})) \cong \mathbb{Z}$, generated by $P(E_0)[1] \cong E_*[1]$, as claimed. 
\end{proof}
\begin{Rem}[The Galois group]
    The Picard group is not the only invariant of $\dcat^{per}(\ComodE)$ that can be studied via descent. For example, in \cite{Mathew2016Galois} Mathew introduces an invariant of a symmetric monoidal stable $\infty$-category $\mathcal{C}$, the Galois group(oid) $\pi_{1}\cat C$. For example, if $A$ is a commutative ring spectrum, then a continuous group homomorphism $\pi_1\Mod_A \to G$ is equivalent to giving a faithful $G$-Galois extension of $A$ \cite{Rognes2008Galois}. If $R$ denotes the endomorphism ring of the unit object in $\cat C$, then there is always a canonical surjection
\[
\pi_1(\cat C) \to \pi_1^{et}\Spec(R)
\]
to the \'etale fundamental group of $\Spec(R)$. One says that the Galois group of $\cat C$ is algebraic if this canonical surjection is an isomorphism. 

    By \cite[Theorem 10.15]{Mathew2016Galois} the Galois theory of the $E(n)$-local category $\Sp_n$ is algebraic; the Galois group $\pi_1(\Sp_n)$ is isomorphic to the \'{e}tale fundamental group of $\Spec\mathbb{Z}_{(p)}$. One therefore expects that when $p > n+1$ the Galois theory of $\dcat^{per}(\ComodE)$ is algebraic, and indeed, this is the case. In fact, using descent, this is essentially the same argument as in \cite[Theorem 10.15]{Mathew2016Galois}. We leave the details to the interested reader.
\end{Rem}
\begin{Rem}
  One can also study the Hopf algebroids $(K,\Sigma)$ appearing in \Cref{thm:descent_mrw}. We will use the notation of \Cref{exa:height_n} so, $\mathbb{G}_n$ denotes the Morava stabilizer group. We note that $\Ext_{\Sigma}^{*,*}(K_*,K_*)$ is a $K(n)_*$-module (\cite[Proposition 5.1.12]{Ravenel1986Complex}), and so is $2(p^n-1)$-periodic. This translates into the category $\dcat^{(L,2)}(\Comod_{\Sigma_0})$ having a $2(p^n-1)$-periodicity, i.e., $P_{\Sigma}(K_0)[2(p^n-1)] \cong P_{\Sigma}(K_0) \cong K_*$. 
\end{Rem}
\begin{Thm}\label{thm:pic_mrw}
    Let $K$ be an even-periodic $p$-local Landweber exact (over $BP_*/I_n$) homology theory of height $n$, let $(K,\Sigma)$ be the associated Hopf algebroid. Suppose $2p-2 \ge n^2$ and $p-1 \nmid n$, then $\Pic(\dcat^{per}(\Comod_{\Sigma_0})) \cong \mathbb{Z}/(2(p^n-1))$ generated by $K_*[1] \cong P_{\Sigma}(K_0)[1]$. 
\end{Thm}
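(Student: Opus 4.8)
The plan is to run the argument of \Cref{thm:hovey_strickland_local}, with Morava $E$-theory replaced throughout by $(K,\Sigma)$. Since $p-1\nmid n$, the Hopf algebroid $(K,\Sigma)$ is descendable (\Cref{thm:descent_mrw}), so \Cref{prop:totalization_pic_descendable} together with \Cref{def:franke} gives an equivalence of connective spectra
\[
\pics(\dcat^{per}(\Comod_{\Sigma}))\simeq \tau_{\ge 0}\Tot\bigl(\pics(\dcat^{per}(\Mod_{K}))\rightrightarrows \pics(\dcat^{per}(\Mod_{\Sigma}))\rightthreearrow\cdots\bigr),
\]
and, exactly as in \Cref{cor:en_ss}, I would extract the associated Bousfield--Kan spectral sequence
\[
E_2^{s,t}\cong\begin{cases}\mathbb{Z}/2 & s=t=0,\\ H^s(\mathcal{H}(n),\mathcal{O}^{\times}_{\mathcal{H}(n)}) & t=1,\\ \Ext^{s,t-1}_{\Sigma_{n,*}}(K_{n,*},K_{n,*}) & t\ge 2,\end{cases}
\]
with $d_r\colon E_r^{s,t}\to E_r^{s+r,t+r-1}$, converging for $t-s\ge 0$ to $\pi_{t-s}\pics(\dcat^{per}(\Comod_{\Sigma}))$. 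The three cases come from $\dcat^{per}(\Mod_K)\simeq\Mod_{K_{\star}}$ and \cite[Theorem 8.1]{BakerRichter2005Invertible}; from the fact that the cobar complex $K_0\to\Sigma_0\to\cdots$ presents $\mathcal{H}(n)$ (\Cref{exa:height_n}), so its complex of units computes $H^*(\mathcal{H}(n),\mathcal{O}^{\times}_{\mathcal{H}(n)})$; and from the endomorphism-spectrum computation of \Cref{Rem:endomorphism_spectrum}, which turns the $t\ge 2$ complex into the cobar complex of $(K_{n,*},\Sigma_{n,*})$.

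To read off $\pi_0$ I need three things. First, I would compute $H^1(\mathcal{H}(n),\mathcal{O}^{\times}_{\mathcal{H}(n)})\cong H^1_{\mathrm{cont}}(\mathbb{G}_n,\mathbb{F}_{p^n}^{\times})\cong\mathbb{Z}/(p^n-1)$, generated by the invariant line bundle $\omega$ (equivalently the comodule $K_2$): via the Lyndon--Hochschild--Serre spectral sequence for a normal pro-$p$ subgroup of $\mathbb{G}_n$, which acts trivially on the order-prime-to-$p$ module $\mathbb{F}_{p^n}^{\times}$ and has quotient $\mathbb{F}_{p^n}^{\times}\rtimes\Gal(\mathbb{F}_{p^n}/\mathbb{F}_p)$, together with Hilbert 90 for $\mathbb{F}_{p^n}/\mathbb{F}_p$, one identifies this with the group of Galois-equivariant characters $\mathbb{F}_{p^n}^{\times}\to\mathbb{F}_{p^n}^{\times}$, which is cyclic of order $p^n-1$ with the identity character corresponding to $\omega$. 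Second, Morava's vanishing theorem (\Cref{exa:height_n}, which uses $p-1\nmid n$) gives $\Ext^{s,*}_{\Sigma_{n,*}}(K_{n,*},K_{n,*})=0$ for $s>n^2$, and a sparseness argument---all classes in $\Ext^{*,*}_{\Sigma_{n,*}}(K_{n,*},K_{n,*})$ lie in internal degrees divisible by $2(p-1)$, cf.\ \cite[Chapter 6]{Ravenel1986Complex}---gives $\Ext^{s,t}_{\Sigma_{n,*}}(K_{n,*},K_{n,*})=0$ whenever $2(p-1)\nmid t$. Here the hypothesis $2p-2\ge n^2$ enters: the diagonal groups $E_2^{s,s}$ with $s\ge 2$, and the groups that could be the target of a differential off $E_2^{1,1}$, either involve odd internal degree (hence vanish, since $K_{n,*}$ and $\Sigma_{n,*}$ are even) or have internal degree $t$ with $0<t\le n^2-1\le 2p-3<2(p-1)$ while lying in cohomological degree $\le n^2$ (above which Morava vanishing applies), so in all cases they vanish. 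Third, the unit suspension $P_{\Sigma}(K_0)[1]\cong K_*[1]$ is invertible in $\dcat^{per}(\Comod_{\Sigma})$ and restricts to the generator of $\Pic(\dcat^{per}(\Mod_K))=\mathbb{Z}/2$, so the edge homomorphism $\Pic(\dcat^{per}(\Comod_{\Sigma}))\to E_{\infty}^{0,0}\subseteq\mathbb{Z}/2$ is surjective; hence $E_{\infty}^{0,0}=\mathbb{Z}/2$ and no differential can reduce it.

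Combining these, $E_{\infty}^{s,s}=0$ for $s\ge 2$, $E_{\infty}^{1,1}=E_2^{1,1}=\mathbb{Z}/(p^n-1)$ (nothing enters it, as the relevant source degrees are negative, and nothing leaves it by the vanishing above), and $E_{\infty}^{0,0}=\mathbb{Z}/2$, so there is a short exact sequence
\[
0\to\mathbb{Z}/(p^n-1)\to\Pic(\dcat^{per}(\Comod_{\Sigma}))\to\mathbb{Z}/2\to 0,
\]
whence $\lvert\Pic(\dcat^{per}(\Comod_{\Sigma}))\rvert=2(p^n-1)$. To finish I would compute the order of $K_*[1]$ directly: in $\dcat^{per}(\Comod_{\Sigma})$ one has $X[2]\cong K_2\otimes X$, and $K_2$ corresponds to $\omega$, which has order $p^n-1$ by the first step, so $(K_*[1])^{\otimes 2k}\cong K_*\otimes\omega^{\otimes k}$ is the unit if and only if $(p^n-1)\mid k$, while each odd power of $K_*[1]$ restricts to the nontrivial element of $\Pic(\dcat^{per}(\Mod_K))$ (using that $K_{n,*}$ is even-periodic, so $[2]$ is the identity there) and is therefore nontrivial. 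Thus $K_*[1]$ has order exactly $2(p^n-1)=\lvert\Pic(\dcat^{per}(\Comod_{\Sigma}))\rvert$, so $\Pic(\dcat^{per}(\Comod_{\Sigma}))$ is cyclic of order $2(p^n-1)$ generated by $K_*[1]\cong P_{\Sigma}(K_0)[1]$, as claimed.

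The step I expect to be the main obstacle is the sparseness/vanishing input for $\Ext^{s,t}_{\Sigma_{n,*}}(K_{n,*},K_{n,*})$ in the band $1\le s\le n^2$: one needs enough control of the Morava stabilizer algebra at primes with $2p-2\ge n^2$ to know that the classes near the zero stem are forced into internal degrees divisible by $2(p-1)$ (or else lie above the Morava vanishing line), and to check that $2p-2\ge n^2$ is precisely the inequality making the numerics work. The group-cohomology identification $H^1(\mathcal{H}(n),\mathcal{O}^{\times})\cong\mathbb{Z}/(p^n-1)$, together with the order of $\omega$, is a second, smaller, point that needs care.
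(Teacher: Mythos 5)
Your proposal is correct and follows essentially the same route as the paper: descent for $(K,\Sigma)$, the Bousfield--Kan spectral sequence for the Picard spectrum with the same identification of the $E_2$-term, Morava vanishing above $s=n^2$ plus sparseness in internal degrees divisible by $2(p-1)$ to collapse the $0$-stem to $E_2^{0,0}\cong\mathbb{Z}/2$ and $E_2^{1,1}\cong H^1(\mathbb{G}_n,\mathbb{F}_{p^n}^{\times})\cong\mathbb{Z}/(p^n-1)$, and non-triviality of the resulting extension via the identification of the generator of $H^1$ with $K_*[2]$. Your Lyndon--Hochschild--Serre/Hilbert 90 computation of $H^1(\mathbb{G}_n,\mathbb{F}_{p^n}^{\times})$ and your direct order count for $K_*[1]$ are only cosmetic variants of the paper's \Cref{prop:group_coh_calculations} and its extension argument.
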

\begin{proof}
   The argument is similar to that used in \Cref{thm:hovey_strickland_local}. First we note that since we are free to make a choice of the height $n$ homology theory used, we take $K_n \coloneqq E_n/I_n$, so that we use $(K_{n,\ast},\Sigma_{n,\ast})$ from \Cref{exa:height_n}. Using descent, we claim that we have a spectral sequence analogous to that in \Cref{cor:en_ss}, namely 
        \[
E_2^{s,t} \cong \begin{cases}
    \mathbb{Z}/2 & s = t = 0 \\
    H^s(\mathbb{G}_n,\mathbb{F}_{p^n}^{\times}) & t = 1\\
    \Ext_{\Sigma_n}^{s,t-1}(K_{n,\ast},K_{n,\ast}) & t \ge 2. 
\end{cases}
    \]
    which converges for $t -s \ge 0$ to $\pi_{t-s}\pics(\dcat^{per}(\Comod_{\Sigma_0})) $. To identify the $E_2^{0,0}$-term we observe that this is exactly $\Pic(\dcat^{per}(\Mod_{K_0}) \cong \Pic(\Mod_{K_{\star}}) \cong \bbZ/2$ by \cite[Theorem 8.1]{BakerRichter2005Invertible}. Perhaps the only further point of note here is the identification of the $t = 1$ line. 
This is the cohomology of the complex
\[\begin{tikzcd}
  {K_{n,0}^{\times}} & {\Sigma_{n,0}^{\times}} & ({\Sigma_{n,0}}\otimes_{K_{n,0}} \Sigma_{n,0})^{\times} & \cdots
  \arrow[shift left=1, from=1-2, to=1-3]
  \arrow[shift left=1, from=1-3, to=1-4]
  \arrow[shift right=1, from=1-3, to=1-4]
  \arrow[shift left=3, from=1-3, to=1-4]
  \arrow[shift right=1, from=1-1, to=1-2]
  \arrow[shift left=1, from=1-1, to=1-2]
  \arrow[shift right=1, from=1-2, to=1-3]
  \arrow[from=1-2, to=1-3]
  \arrow[shift right=3, from=1-3, to=1-4]
\end{tikzcd}\]
We now note that $\Sigma_{n,0} \cong \Hom^c(\mathbb{G}_n,K_{n,0})$, see \cite{Hovey2004Operations} or \cite[Theorem 12]{Strickl2000GrossHopkins} (reduced modulo $I_n$). Then $\Sigma_{n,0}^{\otimes k} \cong \Hom^c(\mathbb{G}_n^{\times k},K_{n,0})$; this argument is essentially contained in Appendix II of \cite{Devinatz1995Moravas}. Noting that $K_{n,0} \cong \mathbb{F}_{p^n}$, we see that we are computing the cohomology of the complex 
% https://q.uiver.app/?q=WzAsNCxbMCwwLCJcXG1hdGhiYntGfV97cF5ufV57XFx0aW1lc30iXSxbMSwwLCJcXEhvbV5jKFxcbWF0aGJie0d9X24sXFxtYXRoYmJ7Rn1fe3Bebn1ee1xcdGltZXN9KSJdLFsyLDAsIlxcSG9tXmMoXFxtYXRoYmJ7R31fblxcdGltZXNcXG1hdGhiYntHfV9uLFxcbWF0aGJie0Z9X3twXm59XntcXHRpbWVzfSkiXSxbMywwLCJcXGNkb3RzIl0sWzAsMV0sWzEsMiwiIiwwLHsib2Zmc2V0IjotMX1dLFsxLDIsIiIsMCx7Im9mZnNldCI6MX1dLFsyLDNdLFsyLDMsIiIsMCx7Im9mZnNldCI6LTF9XSxbMiwzLCIiLDAseyJvZmZzZXQiOjF9XV0=
\[\begin{tikzcd}
  {\mathbb{F}_{p^n}^{\times}} & {\Hom^c(\mathbb{G}_n,\mathbb{F}_{p^n}^{\times})} & {\Hom^c(\mathbb{G}_n\times\mathbb{G}_n,\mathbb{F}_{p^n}^{\times})} & \cdots
  \arrow[from=1-1, to=1-2]
  \arrow[shift left=1, from=1-2, to=1-3]
  \arrow[shift right=1, from=1-2, to=1-3]
  \arrow[from=1-3, to=1-4]
  \arrow[shift left=1, from=1-3, to=1-4]
  \arrow[shift right=1, from=1-3, to=1-4]
\end{tikzcd}\]
which is exactly $H^s(\mathbb{G}_n,\mathbb{F}_{p^n}^{\times})$.

    We have $E_2^{s,t}  = 0$ for $s>n^2$ and the usual sparseness argument shows that there are no non-trivial differentials in the stable range. Then, the spectral sequence only has two non-zero terms in the $t - s =0$ column, namely 
    \[
E_2^{0,0} \cong \bbZ/2 \quad  \text{ and } \quad E_2^{1,1} \cong H^{1}(\mathbb{G}_n,\mathbb{F}_{p^n}^{\times}).
    \] 
    We show below in \Cref{prop:group_coh_calculations} that $H^{1}(\mathbb{G}_n,\mathbb{F}_{p^n}^{\times}) \cong \bbZ/(p^n-1)$, generated by a certain class $\eta$. This class comes from the degree two class in $K_n$, and corresponds to $K_*[2]$. Once again, we deduce that there is a non-trivial extension
    \[
0 \to \bbZ/(p^n-1) \to      
\Pic(\dcat^{per}(\Comod_{\Sigma_0})) \to \bbZ/2 \to 0,
    \]
    and the result follows. 
\end{proof}
\begin{Def}
    Let $t_0 \colon \mathbb{G}_n \to (E_n)_0^{\times}$ denote the crossed homomorphism given by
    \[
    \begin{split}
t_0 \colon \mathbb{G}_n &\to (E_n)_0^{\times}\\
g & \mapsto \frac{g_*u}{u}.
\end{split}
    \]
    and then let $\eta$ denote the composite $\eta \colon \mathbb{G}_n \xrightarrow{t_0} (E_n)_0^{\times} \twoheadrightarrow \mathbb{F}_{p^n}^{\times}$.  
\end{Def}
\begin{Prop}\label{prop:group_coh_calculations}
    Suppose $2p-2 \ge n^2$ and $p-1 \nmid n$, then we have $H^1(\mathbb{G}_n,\mathbb{F}_{p^n}^{\times}) \cong \mathbb{Z}/(p^n-1)$ generated by $\eta$. 
\end{Prop}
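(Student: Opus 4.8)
The plan is to compute $H^{\ast}(\mathbb{G}_n,\mathbb{F}_{p^n}^{\times})$ by two successive applications of the Lyndon--Hochschild--Serre spectral sequence, exploiting that $\mathbb{F}_{p^n}^{\times}$ is finite of order $p^n-1$, which is prime to $p$. Throughout, $\mathbb{F}_{p^n}^{\times}=K_{n,0}^{\times}$ carries the $\mathbb{G}_n$-action coming from \Cref{exa:height_n}: since $\mathbb{S}_n$ acts $\mathbb{F}_{p^n}$-linearly on $K_{n,\ast}=\mathbb{F}_{p^n}[u^{\pm 1}]$ (scaling only $u$), it acts trivially on $K_{n,0}=\mathbb{F}_{p^n}$, while $\Gal(\mathbb{F}_{p^n}/\mathbb{F}_p)$ acts through the Frobenius. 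Recall also that $\eta$ is obtained from the crossed homomorphism $t_0(g)=g_{\ast}u/u\in(E_n)_0^{\times}$ by reduction modulo the maximal ideal.

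First I would compute $H^{\ast}(\mathbb{S}_n,\mathbb{F}_{p^n}^{\times})$. Writing $\mathbb{S}_n=\mathcal{O}_{D_n}^{\times}$ for the units of the maximal order of the central division algebra $D_n$ over $\mathbb{Q}_p$ of invariant $1/n$, there is a short exact sequence $1\to S\to\mathbb{S}_n\to\mathbb{F}_{p^n}^{\times}\to 1$ in which $S=1+\mathfrak{m}_{D_n}$ is pro-$p$ and the surjection is reduction modulo the maximal ideal. As the coefficient module is finite of order prime to $p$ with trivial $\mathbb{S}_n$-action, $H^q_{\mathrm{cont}}(S,\mathbb{F}_{p^n}^{\times})=\varinjlim_U H^q(S/U,\mathbb{F}_{p^n}^{\times})$ vanishes for $q>0$ (each $S/U$ is a finite $p$-group, so its order acts invertibly on the coefficients but annihilates positive-degree cohomology), and the spectral sequence for this extension collapses onto the bottom row, giving $H^q(\mathbb{S}_n,\mathbb{F}_{p^n}^{\times})\cong H^q(\mathbb{F}_{p^n}^{\times},\mathbb{F}_{p^n}^{\times})$ with trivial action; in particular $H^1(\mathbb{S}_n,\mathbb{F}_{p^n}^{\times})\cong\Hom(\mathbb{F}_{p^n}^{\times},\mathbb{F}_{p^n}^{\times})\cong\mathbb{Z}/(p^n-1)$. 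I would then observe that the restriction $\eta|_{\mathbb{S}_n}$ is exactly the leading-coefficient homomorphism $s\mapsto p(s)$: indeed $s_{\ast}u=c(s)u$ for a unit $c(s)\in(E_n)_0^{\times}$ whose reduction is $p(s)$, so $\eta(s)=p(s)$, which is precisely the quotient map $\mathbb{S}_n\twoheadrightarrow\mathbb{F}_{p^n}^{\times}$ and hence corresponds to the identity of $\mathbb{F}_{p^n}^{\times}$, a generator of $H^1(\mathbb{S}_n,\mathbb{F}_{p^n}^{\times})$.

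Next I would feed this into the spectral sequence $E_2^{s,t}=H^s\!\big(\Gal(\mathbb{F}_{p^n}/\mathbb{F}_p),H^t(\mathbb{S}_n,\mathbb{F}_{p^n}^{\times})\big)\Rightarrow H^{s+t}(\mathbb{G}_n,\mathbb{F}_{p^n}^{\times})$. The Galois group is cyclic of order $n$, acting by Frobenius on $H^0(\mathbb{S}_n,\mathbb{F}_{p^n}^{\times})=\mathbb{F}_{p^n}^{\times}$; since Tate cohomology of a finite cyclic group on $\mathbb{F}_{p^n}^{\times}$ vanishes in every degree (Hilbert's Theorem 90 for $H^1$, and surjectivity of the norm $N_{\mathbb{F}_{p^n}/\mathbb{F}_p}$ for $\hat H^0\cong H^2$), we get $E_2^{s,0}=0$ for all $s\ge 1$. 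On the $t=1$ line, $\eta|_{\mathbb{S}_n}$ is a Frobenius-fixed generator of the cyclic group $H^1(\mathbb{S}_n,\mathbb{F}_{p^n}^{\times})$ (restriction from $\mathbb{G}_n$ always lands in the Galois invariants), which forces the Galois action there to be trivial and $E_2^{0,1}=H^1(\mathbb{S}_n,\mathbb{F}_{p^n}^{\times})\cong\mathbb{Z}/(p^n-1)$. There is no differential into $E_2^{0,1}$, and the only differential out of it, $d_2\colon E_2^{0,1}\to E_2^{2,0}$, has zero target; together with $E_{\infty}^{1,0}=E_2^{1,0}=0$ this shows that the edge (restriction) map $H^1(\mathbb{G}_n,\mathbb{F}_{p^n}^{\times})\to H^1(\mathbb{S}_n,\mathbb{F}_{p^n}^{\times})$ is an isomorphism onto $\mathbb{Z}/(p^n-1)$, carrying $\eta$ to a generator. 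This is the proposition.

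The main obstacle I anticipate is not the spectral-sequence bookkeeping, which is forced once the coefficients are understood, but rather pinning down the module structure together with the identity $\eta|_{\mathbb{S}_n}=p$; this single computation is what simultaneously exhibits a generator and collapses the Galois action on the $t=1$ line. I note that this route does not appear to require the hypotheses $2p-2\ge n^2$ or $p-1\nmid n$ — these are inherited from the setting of \Cref{thm:pic_mrw}, where the sparseness of the Picard spectral sequence and Morava's vanishing $\Ext^{s}_{\Sigma_n}=0$ for $s>n^2$ are what actually use them.
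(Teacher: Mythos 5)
Your proof is correct, but it takes a genuinely different route from the paper's. The paper (following Lader's thesis) first splits off the reduced determinant, writing $\mathbb{G}_n \cong \mathbb{G}_n^1 \times \mathbb{Z}_p$ --- a direct product only because the hypotheses force $p \nmid n$ --- then decomposes $\mathbb{G}_n^1 \cong S_n^1 \rtimes F$ with $F \cong \mu_{p^n-1}\times \Gal(\mathbb{F}_{p^n}/\mathbb{F}_p)$, kills the pro-$p$ pieces $\mathbb{Z}_p$ and $S_n^1$ against the prime-to-$p$ coefficients, and lands on $H^1(F,\mathbb{F}_{p^n}^{\times}) \cong H^1(\mu_{p^n-1},\mathbb{F}_{p^n}^{\times}) \cong \End(\mathbb{F}_{p^n}^{\times})$, identifying the generator with $\eta$ by checking that $\mathbb{F}_{p^n}^{\times}\hookrightarrow\mathbb{G}_n\xrightarrow{t_0}(E_n)_0^{\times}\twoheadrightarrow\mathbb{F}_{p^n}^{\times}$ is the identity. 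You instead filter $\mathbb{S}_n$ by the reduction map $\mathbb{S}_n\twoheadrightarrow\mathbb{F}_{p^n}^{\times}$ with pro-$p$ kernel $1+\mathfrak{m}_{D_n}$, and then handle the outer Galois extension head-on via Hilbert 90 together with surjectivity of the norm for finite fields; the identification of the generator is the same computation in disguise ($\eta|_{\mathbb{S}_n}$ equals the leading-coefficient map, which inflates from the identity of $\mathbb{F}_{p^n}^{\times}$), and your observation that Galois-invariance of this generator forces the whole $t=1$ line to carry the trivial action is a clean way to finish. Both arguments ultimately rest on the same two mechanisms (pro-$p$ groups have no higher cohomology with prime-to-$p$ coefficients; the residual finite cyclic pieces contribute $\End(\mathbb{F}_{p^n}^{\times})$), but your route avoids the determinant decomposition entirely and, as you correctly point out, needs neither $2p-2\ge n^2$ nor $p-1\nmid n$: those hypotheses enter \Cref{thm:pic_mrw} only through Morava's vanishing line and sparseness, and enter the paper's proof of this proposition only to make $\mathbb{G}_n\cong\mathbb{G}_n^1\times\mathbb{Z}_p$ an honest product. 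What the paper's approach buys in exchange is reuse of the $\mathbb{G}_n^1$/determinant structure theory that reappears in the $n=1$ and $n=2$ computations later in the paper.
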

\begin{proof}
    The details are essentially contained in the thesis of Lader \cite{lader2013resolution}. Since this is unpublished (and in French) we spell out some of the details here. We need to introduce some notation, compare \cite[Section 1.1 and 1.3]{ghmr}.

    Let $S_n$ denote the $p$-Sylow subgroup of $\mathbb{S}_n$. We recall that there is a determinant homomorphism $\det \colon \mathbb{G}_n \to \mathbb{Z}_p^{\times}$. Identifying the quotient of $\mathbb{Z}_p^{\times}$ by its maximal finite subgroup $\mu_{p^n-1}$ with $\mathbb{Z}_p$, we get the reduced determinant map
    \[
N  \colon \mathbb{G}_n \to \mathbb{Z}_p.
    \]
    Let $\mathbb{G}_n^1$ be the kernel of this map, and $S_n^1$ denote its restriction to $S_n$. Note that $\mathbb{G}_n \simeq \mathbb{G}_n^1 \times \mathbb{Z}_p$ (in general this is a semi-direct product, but our assumptions imply that $p \nmid n$, so that this is actually a product) and $\mathbb{G}_n^1 \cong S_n^1 \rtimes F$, for $F \cong \mu_{p^n-1} \times \Gal(\mathbb{F}_{p^n}/\mathbb{F}_p)$ \cite[Lemma 1.13]{lader2013resolution}. 

    Because $\bbZ_p$ has cohomological dimension 1, the (collapsing) Lyndon--Hochschild--Serre spectral sequence gives a short exact sequence 
    \[
0 \to H^1(\mathbb{Z}_p,H^0(\mathbb{G}_n^1,\bbF_{p^n}^{\times})) \to H^1(\mathbb{G}_n,\mathbb{F}_{p^n}^{\times}) \to H^0(\bbZ_p,H^1(\mathbb{G}_n^1,\mathbb{F}_{p^n}^{\times})) \to 0. 
    \]
We have that $H^1(\mathbb{Z}_p,H^0(\mathbb{G}_n^1,\bbF_{p^n}^{\times})) = 0$, because $\mathbb{Z}_p$ is a pro-$p$ group, and 
\[
H^0(\bbZ_p,H^1(\mathbb{G}_n^1,\mathbb{F}_{p^n}^{\times})) \cong H^1(\mathbb{G}_n^1,\mathbb{F}_{p^n}^{\times})
\]
as $\bbZ_p$ is acting trivially. Therefore, we deduce that
\[
H^1(\mathbb{G}_n,\mathbb{F}_{p^n}^{\times}) \cong H^1(\mathbb{G}_n^1,\mathbb{F}_{p^n}^{\times}).
\]

    We have a spectral sequence
    \[
H^r(F,H^s(S_n^1,\mathbb{F}_{p^n}^{\times})) \implies H^{r+s}(\mathbb{G}_n^1,\mathbb{F}_{p^n}^{\times}).
    \]
    Because $S_n^1$ is a pro-$p$ group and $\mathbb{F}_{p^n}^{\times}$ has order prime to $p$, we have $H^s(S_n^1,\mathbb{F}_{p^n}^{\times}) = 0$ for $s > 0$ (\cite[Proposition A.22]{lader2013resolution}). Therefore, we have 
    \[
H^s(S_n^1,\mathbb{F}_{p^n}^{\times}) \cong \begin{cases}
    0 & q > 0 \\
    \mathbb{F}_{p^n}^{\times} & s = 0,
\end{cases}
    \]
    and the spectral sequence collapses to give $H^r(\mathbb{G}_n^1,\mathbb{F}_{p^n}^{\times}) \cong H^r(F,\mathbb{F}_{p^n}^{\times})$. Finally, when $r = 1$ this is equal to $\mathbb{Z}/(p^n-1)$ by \cite[Proposition 5.19]{lader2013resolution} (this uses the relevant Lyndon--Hochschild--Serre sequence); more specifically, $H^1(F;\mathbb{F}_{p^n}^{\times}) \cong H^1(\mu_{p^n-1};\mathbb{F}_{p^n}^{\times}) \cong \End(\mathbb{F}_{p^n}^{\times}) \cong \mathbb{Z}/(p^n-1)$.

    Finally, we show that the generator corresponds to the class $\eta$ (this is also \cite[Corollary 5.24(3)]{lader2013resolution}, where $\eta$ is denoted $\Omega$). Unwinding the definitions, the isomorphism $H^1(\mathbb{G}_n,\bbF_{p^n}^{\times}) \cong \bbZ/(p^n-1)$ is given by 
    % https://q.uiver.app/?q=WzAsNCxbMCwwLCJIXjEoXFxtYXRoYmJ7R31fbixcXG1hdGhiYntGfV97cF5ufV57XFx0aW1lc30pIl0sWzEsMCwiXFxFbmQoXFxtYXRoYmJ7Rn1fe3Bebn0pIl0sWzAsMSwiW1xcbWF0aGJie0d9X24gXFx4cmlnaHRhcnJvd3tkfVxcbWF0aGJie0Z9X3twXm59XntcXHRpbWVzfV0iXSxbMSwxLCJbXFxtYXRoYmJ7Rn1fe3Bebn1ee1xcdGltZXN9IFxcaG9va3JpZ2h0YXJyb3cgXFxtYXRoYmJ7R31fbiBcXHhyaWdodGFycm93e2R9XFxtYXRoYmJ7Rn1fe3Bebn1ee1xcdGltZXN9XSJdLFswLDFdLFsyLDMsIiIsMCx7InN0eWxlIjp7InRhaWwiOnsibmFtZSI6Im1hcHMgdG8ifX19XV0=
\[\begin{tikzcd}[row sep=0.1em, column sep = 10em]
  {H^1(\mathbb{G}_n,\mathbb{F}_{p^n}^{\times})} & {\End(\mathbb{F}_{p^n}^{\times})} \\
  {[\mathbb{G}_n \xrightarrow{d}\mathbb{F}_{p^n}^{\times}]} & {[\mathbb{F}_{p^n}^{\times} \hookrightarrow \mathbb{G}_n \xrightarrow{d}\mathbb{F}_{p^n}^{\times}]}
  \arrow[from=1-1, to=1-2]
  \arrow[maps to, from=2-1, to=2-2]
\end{tikzcd}\]
Then, the composite
\[
\mathbb{F}_{p^n}^{\times} \hookrightarrow \mathbb{G}_n \xrightarrow{t_0} (E_n)_0^{\times} \twoheadrightarrow\mathbb{F}_{p^n}^{\times}
\]
is the identity (use \cite[Equation (1.5) and Proposition 1.4]{lader2013resolution}), and the result follows. 
\end{proof}

\begin{comment}
\begin{Rem}\label{rem:invertible_cohomology2}
  One has an analog of \Cref{rem:invertible_cohomology}. We have $X \in \dcat^{per}(\ComodE)^{\wedge}_{I_n}$ is invertible if and only if $(P_{\Gamma}(E_0)^{\wedge}_{I_n} \otimes X)^{\wedge}_{I_n}$ is invertible in the module category $\Mod_{\dcat^{per}(\ComodE)^{\wedge}_{I_n}}(P_{\Gamma}(E_0)^{\wedge}_{I_n})$ or equivalently (via \Cref{prop:module_categories_descendable}) $(\epsilon'_*X)^{\wedge}_{I_n} \in \dcat^{per}(\Mod_{E_0})^{\wedge}_{I_n}$ is invertible.  We shall see in a moment that $\Pic(\dcat^{per}(\Mod_{E_0})) \cong \mathbb{Z}/2$ generated by $P(E_0)[1] \cong E_*[1]$. Together, 
  \[
X \in \Pic(\dcat^{per}(\ComodE)^{\wedge}_{I_n}) \iff H^*(X^{\wedge}_{I_n}) \cong E_* \text{ (up to a shift)}.
  \]
  This is the direct analog of the result 
  \[
X \in \Pic(\Sp_{K(n)}) \iff E^{\vee}_*(X) \cong E_* \text{ (up to a shift)}. 
  \]
\end{Rem}
\end{comment}

\section{\texorpdfstring{$I_n$}{I}-complete quasi-periodic comodules}\label{def:complete_comodules}
In \cite{BarthelSchlankStapleton2021Monochromatic} Barthel--Schlank--Stapleton extended their algebraicity results to the $K(n)$-local setting. The analog of the comodule categories $\dcat^{per}(\ComodE)$ is given by the Bousfield localization at $P(E_0/I_n)$, analogous to how the $K(n)$-local category can be obtained from the $E_n$-local category by Bousfield localization at the localization of a finite local type $n$ spectrum. The goal of this section is to prove an analog of our descent results for this localized category, and to compute its Picard group. 
\begin{Conv}
  In this section we fix a choice of $p$-local Landweber exact cohomology theory of type $n$. Namely, we let $E = E_n$ denote Morava $E$-theory associated to the Honda formal group law over $\bbF_{p^n}$; this is because we will give results that depend on the cohomology of the associated Morava stabilizer group $\mathbb{G}_n$, and because the ring $E_*$ is already $I_n$-adically complete. 
\end{Conv}
\begin{Rem}
  We recall that in a symmetric monoidal stable $\infty$-category $(\cat C,\otimes,\unit)$ we say that an object $X \in \cat C$ is $A$-local (or $A$-complete) if for any $Y \in \cat C$ with $Y \otimes A \simeq 0$, the space of maps $\Hom_{\cat C}(Y,X)$ is contractible. 

  The inclusion of $A$-local objects in $\cat C$ into $\cat C$ has a left adjoint, and we let $L_A \colon \cat C \to L_A\cat C$ denote the corresponding localization functor. This functor is symmetric monoidal, and gives $L_A\cat C$ the structure of a symmetric monoidal stable $\infty$-category with the localized tensor product $X \htimes Y \coloneqq L_A(X \otimes Y)$. 
\end{Rem}
\begin{Def}\label{def:complete_categories}
We let
\[
\begin{split}
	\cat D(Mod_{E_0})^{\wedge}_{I_n} & \coloneqq L_{E_0/I_n}\dcat (\Mod_{E_0})\\
	\cat D(\ComodE)^{\wedge}_{I_n} & \coloneqq L_{E_0/I_n}\dcat (\ComodE) \\
	\dcat^{per}(Mod_{E_0})^{\wedge}_{I_n} & \coloneqq L_{P(E_0/I_n)}\dcat^{per} (\Mod_{E_0})\\
	\dcat^{per}(\ComodE)^{\wedge}_{I_n} & \coloneqq L_{P(E_0/I_n)}\dcat^{per} (\ComodE) \\
\end{split}
\]
We will usually simply write $(-)^{\wedge}_{I_n}$ for the localization (completion) functor. 
\end{Def}
\begin{Rem}
The category $\dcat^{per}(\ComodE)^{\wedge}_{I_n}$ seems to have first been considered by Barthel--Schlank--Stapleton \cite{BarthelSchlankStapleton2021Monochromatic}, who proved that for any non-principal ultrafilter $\mathcal{F}$ on the set of primes, there is an equivalence of symmetric monoidal stable $\infty$-categories
      \[
\prod_{\mathcal F}\Sp_{K(n)} \simeq \prod_{\mathcal F} \dcat^{per}(\Comod_{E_0E})^{\wedge}_{I_n}. 
    \]
    On the other hand, we are not aware of any similar result for the categories $\dcat^{per}(\Comod_{\Sigma_0})$ appearing in \Cref{thm:descent_mrw}. 
\end{Rem}
\begin{Rem}
We note that \Cref{prop:bs_complete} implies that the following diagram commutes:
% https://q.uiver.app/?q=WzAsNCxbMSwwLCJcXGRjYXRee3Blcn0oXFxDb21vZF97RV8wRX0pIl0sWzAsMCwiXFxkY2F0KFxcQ29tb2Rfe0VfMEV9KSJdLFswLDEsIlxcZGNhdChcXENvbW9kX3tFXzBFfSlee1xcd2VkZ2V9X3tJX259Il0sWzEsMSwiXFxkY2F0XntwZXJ9KFxcQ29tb2Rfe0VfMEV9KV57XFx3ZWRnZX1fe0lfbn0iXSxbMCwxLCJVIiwyXSxbMSwyLCIoLSlee1xcd2VkZ2V9X3tJX259IiwyXSxbMywyLCJVIl0sWzAsMywiKC0pXntcXHdlZGdlfV97SV9ufSJdXQ==
\[\begin{tikzcd}
  {\dcat(\Comod_{E_0E})} & {\dcat^{per}(\Comod_{E_0E})} \\
  {\dcat(\Comod_{E_0E})^{\wedge}_{I_n}} & {\dcat^{per}(\Comod_{E_0E})^{\wedge}_{I_n}}
  \arrow["U"', from=1-2, to=1-1]
  \arrow["{(-)^{\wedge}_{I_n}}"', from=1-1, to=2-1]
  \arrow["U", from=2-2, to=2-1]
  \arrow["{(-)^{\wedge}_{I_n}}", from=1-2, to=2-2]
\end{tikzcd}\]

	In particular, $(UP(M))^{\wedge}_{I_n} \simeq U(P(M)^{\wedge}_{I_n})$ for any $M \in \dcat(\Comod_{E_0})$, where the completions are taken in the appropriate categories. Therefore, there is no ambiguity if we simply write $P(M)^{\wedge}_{I_n}$. The same holds true for $\Mod_{E_0}$ (or indeed, any ring $R$).
\end{Rem}
\begin{Rem}\label{rem:complete_free}
  Let $R$ be a commutative noetherian ring, $I$ a prime ideal, and let $(-)^{\wedge}_I \colon \dcat(\Mod_R) \to L_{R/I}\dcat(\Mod_R)$ denote the Bousfield localization at $R/I$. Then, for any $M \in \dcat(\Mod_R)$ there is a spectral sequence 
  \[
H_p^I(H_q(M)) \implies H_{p+q}(M^{\wedge}_{I})
  \]
  see for example, the last section of \cite{DwyerGreenlees2002Complete} (specifically, page 219). Here $H_p^I$ denotes the local homology groups of Greenlees--May \cite{GreenleesMay1992Derived}. Modules in the essential image of the functor $H_0^I$ are called $L$-complete, see \cite[Appendix A]{HoveyStrickl1999Morava}. We note that if $H_*M$ is a free, or even just flat, $R$-module, then we have $H_p^I(H_*M) = 0$ for $p>0$ and $H_0^I(H_*M) \cong (H_*M)^{\wedge}_I$ (for example, by \cite[Theorem A.2]{HoveyStrickl1999Morava}), and the spectral sequence collapses to give isomorphisms
  \[
H_0^I(H_*M) \cong (H_*M)^{\wedge}_{I} \cong H_*(M^{\wedge}_I).
  \]
  This is why we call this the completion functor. 
\end{Rem}
\begin{Lem}\label{lem:prop_internal_complete}
There are equivalences
	\[
	\begin{split}
\dcat^{per}(\Mod_{E_0})^{\wedge}_{I_n} \simeq \Mod_{\dcat(\Mod_{E_0})^{\wedge}_{I_n}}(P(E_0)^{\wedge}_{I_n}) \\
\dcat^{per}(\ComodE)^{\wedge}_{I_n} \simeq \Mod_{\dcat(\ComodE)^{\wedge}_{I_n}}(P(E_0)^{\wedge}_{I_n}) 
\end{split}
	\]
\end{Lem}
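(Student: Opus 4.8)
The plan is to obtain both equivalences as the $I_n$-completed counterparts of \Cref{prop:br_module}, running the monoidal Barr--Beck reconstruction inside the Bousfield-localized categories. First I would apply the completed form of monoidal Barr--Beck, \Cref{prop:bs_complete}, to the periodification adjunctions
\[
P_{\Gamma} \colon \dcat(\ComodE) \leftrightarrows \dcat^{per}(\ComodE) \colon U_{\Gamma}
\qquad\text{and}\qquad
P_{E_0} \colon \dcat(\Mod_{E_0}) \leftrightarrows \dcat^{per}(\Mod_{E_0}) \colon U_{E_0}
\]
of \Cref{rem:pu_adjunction}, with localizing object $E_0/I_n$. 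The hypotheses needed of these adjunctions --- $U$ conservative, $U$ colimit-preserving, and the projection formula --- are exactly the ones already verified in the proof of \Cref{prop:br_module}, so no new work is required at this step.

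Second, I would match up the localizations on the two sides. By \Cref{rem:qp_monoidal} we have $P(E_0/I_n) \simeq P(E_0) \otimes (E_0/I_n)$ (suppressing the forgetful functor), so that an object of $\dcat^{per}(\ComodE) \simeq \Mod_{\dcat(\ComodE)}(P(E_0))$ is $P(E_0/I_n)$-complete precisely when its image in $\dcat(\ComodE)$ is $E_0/I_n$-complete; equivalently, $L_{P(E_0/I_n)}$ on $\dcat^{per}(\ComodE)$ is the localization induced by $L_{E_0/I_n}$ on the base $\dcat(\ComodE)$ --- this is the compatibility recorded by the commuting square displayed just above. Feeding this into \Cref{prop:bs_complete} yields
\[
\dcat^{per}(\ComodE)^{\wedge}_{I_n} \simeq \Mod_{\dcat(\ComodE)^{\wedge}_{I_n}}\big(P(E_0)^{\wedge}_{I_n}\big),
\]
which is the second equivalence; the first follows by the identical argument applied to the $(P_{E_0}, U_{E_0})$ adjunction, after rewriting the base via the identification $\dcat^{per}(\Mod_{E_0}) \simeq \dcat(\Mod_{E_*})$ of \Cref{rem:bss_descent_compare}.

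The only genuine content is packaged inside \Cref{prop:bs_complete} itself: the assertion that Bousfield localization distributes over the $\Mod_{\cat C}(T)$-construction, i.e.\ that localizing $\Mod_{\cat C}(T)$ at $T \otimes A$ produces $\Mod_{L_A \cat C}(L_A T)$, together with the fact that the completion functor commutes with the relevant forgetful and base-change functors. Granting that proposition --- the natural completed analogue of \Cref{prop:bs_module} --- the present lemma is bookkeeping with $P$, $U$, and the identifications $P(E_0) \cong E_*$ and $P(E_0E) \cong E_*E$ established earlier; the only thing to watch is keeping track of which category each completion is taken in. So the main obstacle, if any, lies entirely in \Cref{prop:bs_complete}, which I am assuming as given.
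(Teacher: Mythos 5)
Your proposal is correct and follows the paper's proof exactly: the paper likewise deduces both equivalences by applying \Cref{prop:bs_complete} with $T = E_0/I_n$ to the $(P,U)$ adjunctions of \Cref{rem:pu_adjunction}, with the hypotheses of \Cref{thm:mnn_bb} already checked in the proof of \Cref{prop:br_module}. Your extra step ``matching up the localizations'' is in fact automatic, since $\dcat^{per}(\ComodE)^{\wedge}_{I_n}$ is \emph{defined} (\Cref{def:complete_categories}) as $L_{P(E_0/I_n)}\dcat^{per}(\ComodE)$, which is precisely $L_{F(T)}\dcat$ for $T=E_0/I_n$.
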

\begin{proof}
	This follows from \Cref{prop:bs_complete} with $T = E_0/I_n$ applied to the $(P,U)$ adjunction of \Cref{rem:pu_adjunction} .
\end{proof}
\begin{Prop}\label{prop:complete_module}
	 There is an equivalence 
	\[
\dcat^{per}(\Mod_{E_0})^{\wedge}_{I_n} \simeq \Mod_{\dcat^{per}(\ComodE)^{\wedge}_{I_n}}(P(E_0E)^{\wedge}_{I_n}). 
	\]
\end{Prop}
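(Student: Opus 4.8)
The plan is to run the argument that establishes the ``moreover'' clause of \Cref{prop:free_forget_descendable} in the $I_n$-localized setting, with the monadic reconstruction used there replaced by its localized analogue \Cref{prop:bs_complete}. Concretely, I would apply \Cref{prop:bs_complete} with localizing object $T = P(E_0/I_n)$ to the symmetric monoidal adjunction $\rho_* \colon \dcat^{per}(\ComodE) \leftrightarrows \dcat^{per}(\Mod_{E_0}) \colon \rho^*$ of \Cref{prop:free_forget_descendable}. The hypotheses needed ($\rho^*$ conservative and colimit-preserving, together with the projection formula) are precisely the properties recorded in that proposition, so no new verification is required there.

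The first step is to check that the localizing objects on the two sides match up under $\rho_*$. Using the commuting square of \Cref{prop:free_forget_descendable} one has $\rho_* \circ P \simeq P \circ \epsilon_*$, and since $\epsilon_*$ is the forgetful functor we get $\epsilon_*(E_0/I_n) \simeq E_0/I_n$, hence $\rho_*(P(E_0/I_n)) \simeq P(E_0/I_n)$ in $\dcat^{per}(\Mod_{E_0})$. Therefore localizing $\dcat^{per}(\ComodE)$ at $P(E_0/I_n)$ gives $\dcat^{per}(\ComodE)^{\wedge}_{I_n}$ and localizing $\dcat^{per}(\Mod_{E_0})$ at $\rho_*(P(E_0/I_n))$ gives $\dcat^{per}(\Mod_{E_0})^{\wedge}_{I_n}$, in the notation of \Cref{def:complete_categories}.

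With this in hand, \Cref{prop:bs_complete} produces an equivalence $\dcat^{per}(\Mod_{E_0})^{\wedge}_{I_n} \simeq \Mod_{\dcat^{per}(\ComodE)^{\wedge}_{I_n}}(\rho^*(P(E_0))^{\wedge}_{I_n})$, where $\rho^*(P(E_0))$ is the image under $\rho^*$ of the unit $P(E_0)$ of $\dcat^{per}(\Mod_{E_0})$, regarded as a commutative algebra in $\dcat^{per}(\ComodE)$ (just as $P(E_0)^{\wedge}_{I_n}$ appears in \Cref{lem:prop_internal_complete}). The last step is to identify this algebra: taking right adjoints in the square of \Cref{prop:free_forget_descendable} (compare \Cref{rem:colimit_preserving_periodisation}) gives $\rho^* \circ P \simeq P \circ \epsilon^*$, and $\epsilon^*(E_0) = E_0E \otimes_{E_0} E_0 \cong E_0E$, so $\rho^*(P(E_0)) \simeq P(E_0E)$ and hence $\rho^*(P(E_0))^{\wedge}_{I_n} \simeq P(E_0E)^{\wedge}_{I_n}$, which is exactly the claim.

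I do not anticipate a serious obstacle: the statement is essentially the $I_n$-localized shadow of \Cref{prop:free_forget_descendable}, and all the genuine content (the Barr--Beck-type hypotheses, the compatibility of $\rho_*$ and $\rho^*$ with periodification) is already established. The only points demanding care are bookkeeping ones: confirming that \Cref{prop:bs_complete} applies to the $(\rho_*,\rho^*)$ adjunction with the stated $T$, that the algebra object governing the localized module category is $\rho^*$ of the \emph{completed} unit, and that this object coincides with $P(E_0E)^{\wedge}_{I_n}$ rather than with some other completion of $E_0E$.
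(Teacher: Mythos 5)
Your proposal is correct and follows essentially the same route as the paper: apply \Cref{prop:bs_complete} with $T = P(E_0/I_n)$ to the $(\rho_*,\rho^*)$ adjunction of \Cref{prop:free_forget_descendable} and then identify the resulting algebra object as $(\rho^*(P(E_0)))^{\wedge}_{I_n} \simeq P(E_0E)^{\wedge}_{I_n}$. The only cosmetic difference is that the paper writes the algebra first as $\epsilon^*(P(E_0)^{\wedge}_{I_n})$ and uses the commuting square from \Cref{prop:bs_complete} to rewrite it, and the identity $\rho^*\circ P_{A}\simeq P_{\Gamma}\circ\epsilon^*$ you invoke is justified via the projection formula as in \Cref{Rem:commutative_diagram_modules} rather than by passing to right adjoints.
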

\begin{proof}
By \Cref{prop:bs_complete} (with $T = P(E_0/I_n)$) the adjunction 
\[
\rho_* \colon \dcat^{per}(\ComodE) \longleftrightarrows \dcat^{per}(\Mod_{E_0}) \colon \rho^*
\]
gives rise to an adjunction
\[
\epsilon_* \colon \dcat^{per}(\ComodE)^{\wedge}_{I_n} \longleftrightarrows \dcat^{per}(\Mod_{E_0})^{\wedge}_{I_n} \colon \epsilon^*
\]
and a commutative diagram
\begin{equation}\label{eq:commutative}\begin{tikzcd}
  \dcat^{per}(\ComodE) & \dcat^{per}(\Mod_{E_0}) \\
  \dcat^{per}(\ComodE)^{\wedge}_{I_n} & \dcat^{per}(\Mod_{E_0})^{\wedge}_{I_n}.
  \arrow["\rho^*"', from=1-2, to=1-1]
  \arrow["{(-)^{\wedge}_{I_n}}", shift right=1, from=1-2, to=2-2]
  \arrow["{\epsilon^*}", from=2-2, to=2-1]
  \arrow["{(-)^{\wedge}_{I_n}}"', shift right=1, from=1-1, to=2-1]
\end{tikzcd}\end{equation}

Moreover, there is an equivalence of symmetric monoidal $\infty$-categories 
\[
\dcat^{per}(\Mod_{E_0})^{\wedge}_{I_n}  \simeq \Mod_{\dcat^{per}(\ComodE)^{\wedge}_{I_n}}(\epsilon^*(P(E_0)^{\wedge}_{I_n})).
\]
To finish the proof, we observe that
\[
\epsilon^*(P(E_0)^{\wedge}_{I_n}) \simeq (\rho^*(P(E_0)))^{\wedge}_{I_n} \simeq P(E_0E)^{\wedge}_{I_n}. \qedhere
\]
\end{proof}
\begin{Lem}\label{lem:local_pic}
  The localization functor $(-)^{\wedge}_{I_n} \colon \dcat^{per}(\Mod_{E_0}) \to \dcat^{per}(\Mod_{E_0})^{\wedge}_{I_n}$ induces an isomorphism on Picard groups. In particular, 
  \[
\Pic( \dcat^{per}(\Mod_{E_0})^{\wedge}_{I_n}) \cong \bbZ/2. 
  \]
\end{Lem}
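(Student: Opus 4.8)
The statement has two parts; the ``in particular'' is immediate from the first part, since $\Pic(\dcat^{per}(\Mod_{E_0})) \cong \mathbb{Z}/2$ was established (via \cite{BakerRichter2005Invertible}) in the proof of \Cref{cor:en_ss}. So the plan is to show that $L \coloneqq (-)^{\wedge}_{I_n}$ induces an isomorphism on Picard groups. Since $L$ is symmetric monoidal it sends invertible objects to invertible objects and hence induces a group homomorphism $\Pic(\dcat^{per}(\Mod_{E_0})) \to \Pic(\dcat^{per}(\Mod_{E_0})^{\wedge}_{I_n})$. For injectivity I would first note that the unit $P(E_0) \cong E_*$ is already $I_n$-complete as an object: $E_*$ is free (hence flat) over the complete local ring $E_0$, so \Cref{rem:complete_free} gives $H_*(E_*{}^{\wedge}_{I_n}) \cong (E_*)^{\wedge}_{I_n} \cong E_*$, i.e.\ $L(\unit) \simeq \unit$. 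Hence $L(E_*[1]) \simeq E_*[1]$, whose homology sits in odd degrees, so $L(E_*[1]) \not\simeq \unit$ and the homomorphism is injective.

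For surjectivity I would reduce modulo $I_n$. Base change along $E_* \to E_*/I_n$ — concretely, the symmetric monoidal functor $\rho\colon \dcat^{per}(\Mod_{E_0})^{\wedge}_{I_n} \to \dcat^{per}(\Mod_{E_0/I_n})$ which on $\Mod_{E_{\star}}$ (see \Cref{rem:bss_descent_compare} for the identification $\dcat^{per}(\Mod_{E_0}) \simeq \Mod_{E_\star}$) is $- \otimes_{E_{\star}} (E/I_n)_{\star}$, and which factors through $L$ because it annihilates $I_n$-acyclic objects — sends an invertible object $Y$ to an invertible object of $\dcat^{per}(\Mod_{E_0/I_n})$. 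Since $E_0/I_n \cong \mathbb{F}_{p^n}$ is a field, $\dcat^{per}(\Mod_{E_0/I_n})$ is the $2$-periodification of the derived category of $\mathbb{F}_{p^n}$-modules, so $\Pic(\dcat^{per}(\Mod_{E_0/I_n})) \cong \mathbb{Z}/2$, generated by the shift of the unit. Therefore $\rho(Y) \simeq \rho(L(E_*[k]))$ for some $k \in \{0,1\}$; replacing $Y$ by $Y \otimes L(E_*[-k])$ we may assume $\rho(Y) \simeq \unit$, and it then suffices to prove $Y \simeq \unit$.

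To finish I would show that such a $Y$ is concentrated in a single homological degree with homology $E_0$. Each nonzero $H_j Y$ is $L$-complete, so by the derived Nakayama lemma for $L$-complete modules (\cite[Appendix~A]{HoveyStrickl1999Morava}) one has $H_j Y / I_n H_j Y \neq 0$ whenever $H_j Y \neq 0$; feeding this into the hyper-Tor spectral sequence $\mathrm{Tor}^{E_0}_p(H_q Y, \mathbb{F}_{p^n}) \Rightarrow H_{p+q}(\rho Y)$, and using that $E_0$ has finite global dimension, a standard corner argument shows the bottom nonzero homology group of $Y$ survives to $\rho(Y) \simeq \unit$; applying the same to $Y^{-1}$ (whose residue is again $\unit$) shows $Y$ and $Y^{-1}$ are both connective up to the same shift. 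Then $H_0(Y \otimes Y^{-1}) \cong E_0$ computes $H_0 Y \otimes_{E_0} H_0 Y^{-1} \cong E_0$, and over the local ring $E_0$ this forces $H_0 Y \cong H_0 Y^{-1} \cong E_0$; combined with connectivity this gives that $Y$ is concentrated in degree $0$ with $H_0 Y \cong E_0$. Since $E_{\star} = H\pi_*(E)$ is a (generalized) Eilenberg--Mac Lane ring spectrum, such a $Y$ is formal, i.e.\ $Y \simeq E_* \simeq \unit$, which shows surjectivity. Putting the two halves together, $L$ is an isomorphism on Picard groups, and the ``in particular'' follows.

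The main difficulty is the rigidity step of the previous paragraph: ruling out ``exotic'' invertible objects in the $I_n$-complete category, i.e.\ showing that an invertible object of $\dcat^{per}(\Mod_{E_0})^{\wedge}_{I_n}$ is concentrated in one degree with homology $E_0$. This is precisely where the $L$-completeness machinery of \cite[Appendix~A]{HoveyStrickl1999Morava} and the hyper-Tor bookkeeping are genuinely needed (one also has to check there that $L$-Nakayama applies to the possibly non-finitely-generated modules $H_* Y$, and that the residue functor $\rho$ is conservative on complete objects, which both follow because $I_n$ is generated by a regular sequence). An alternative worth mentioning is to identify $\dcat^{per}(\Mod_{E_0})^{\wedge}_{I_n}$ with the $2$-periodification of the $\infty$-category of $L$-complete $E_0$-modules and invoke a Picard computation there, but the route above seems the most self-contained.
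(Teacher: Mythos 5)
Your proposal is correct in outline, but it takes a genuinely different route from the paper: the paper's entire proof is a citation of \cite[Corollary 3.15]{BarthelSchlankStapleton2021Monochromatic} (together with the module-category identifications of \Cref{prop:free_forget_descendable,prop:complete_module}), whereas you essentially reprove that external input from scratch. What your route buys is self-containedness --- the injectivity half (the unit is already complete, and $E_*[1]$ has odd homology) and the surjectivity half (reduce mod $I_n$ to the field case, then rigidify using $L$-completeness, Nakayama, and the Tor spectral sequence over the regular local ring $E_0$) are exactly the ingredients of the cited result. What the paper's route buys is brevity and consistency with the rest of \Cref{def:complete_comodules}, which leans on the Barthel--Schlank--Stapleton framework throughout.

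One caveat on your rigidity step: in $\dcat^{per}(\Mod_{E_0})\simeq \Mod_{E_\star}$ the homology of every object is a graded $E_*$-module and hence $2$-periodic, so there is no ``bottom nonzero homology group'' and ``connective up to a shift'' is not meaningful; the corner argument should instead be phrased as saying that the edge map $H_*Y/I_nH_*Y \hookrightarrow H_*(\rho Y)\cong K_*$ is injective (the $p=0$ column of the Tor spectral sequence supports and receives no differentials), which kills $H_{\mathrm{odd}}Y$ by Nakayama and shows $H_0Y/I_n$ is at most one-dimensional. You should also justify that the homotopy of an object of the localized category is $L$-complete --- this uses that $I_n$ is generated by a regular sequence, so that $P(E_0/I_n)$-local objects coincide with derived $I_n$-complete ones and \cite[Appendix A]{HoveyStrickl1999Morava} applies degreewise. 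With those repairs the argument goes through and recovers the statement of \cite[Corollary 3.15]{BarthelSchlankStapleton2021Monochromatic} directly.
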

\begin{proof}
In light of \Cref{prop:free_forget_descendable,prop:complete_module}, this follows from \cite[Corollary 3.15]{BarthelSchlankStapleton2021Monochromatic} (to translate - their category $\Fr_{n,p}$ is our $\dcat^{per}(\ComodE)$, $A_{n,p}$ corresponds to $P(E_0E)$ and $\widehat \Mod_{A_{n,p}}$ corresponds to $\Mod_{\dcat^{per}(\ComodE)^{\wedge}_{I_n}}(P(E_0E)^{\wedge}_{I_n})$). 
\end{proof}
The following is the completed version of \Cref{prop:module_categories_descendable}. 

\begin{Prop}\label{prop:descendable_module_complete}
  For each $k \ge 2$ there is an equivalence of symmetric monoidal stable $\infty$-categories 
  \[
  \begin{split}
\Mod_{\dcat^{per}(\ComodE)^{\wedge}_{I_n}}((P(E_0E)^{\wedge}_{I_n})^{\htimes k}) &\simeq \Mod_{\dcat^{per}(\Mod_{E_0})^{\wedge}_{I_n}}((P(E_0E)^{\wedge}_{I_n})^{\htimes (k-1)}). 
\end{split}
  \]
\end{Prop}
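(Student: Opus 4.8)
The plan is to run the proof of \Cref{prop:module_categories_descendable} with every category replaced by its $I_n$-completion, every tensor product $\otimes$ replaced by the localized tensor product $\htimes$, the adjunction $(\rho_*,\rho^*)$ of \Cref{prop:free_forget_descendable} replaced by the completed adjunction $\epsilon_*\colon\dcat^{per}(\ComodE)^{\wedge}_{I_n}\leftrightarrows\dcat^{per}(\Mod_{E_0})^{\wedge}_{I_n}\colon\epsilon^*$ produced in \Cref{prop:complete_module}, and $P(E_0E)$ replaced by $P(E_0E)^{\wedge}_{I_n}$. The abstract engine, the monoidal base-change result \Cref{prop:bs_module}, is unchanged, and \Cref{prop:bs_complete} supplies the compatibility of $(-)^{\wedge}_{I_n}$ with the functors in sight.

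First I would record that the completed adjunction $(\epsilon_*,\epsilon^*)$ is symmetric monoidal, with $\epsilon^*$ conservative and colimit-preserving and satisfying the projection formula; each of these descends from the corresponding property of $(\rho_*,\rho^*)$ via \Cref{prop:bs_complete}, exactly as in the proofs of \Cref{lem:prop_internal_complete,prop:complete_module}. Since $\epsilon^*$ is then lax symmetric monoidal, $\epsilon^*(\unit)\simeq P(E_0E)^{\wedge}_{I_n}$ is a commutative algebra object of $\dcat^{per}(\ComodE)^{\wedge}_{I_n}$, and hence so is $T\coloneqq(P(E_0E)^{\wedge}_{I_n})^{\htimes(k-1)}$. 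Applying \Cref{prop:bs_module} with this $T$ to the adjunction $(\epsilon_*,\epsilon^*)$ then yields, just as in \Cref{prop:module_categories_descendable}, a symmetric monoidal equivalence
\[
\Mod_{\dcat^{per}(\Mod_{E_0})^{\wedge}_{I_n}}(\epsilon_* T)\;\simeq\;\Mod_{\dcat^{per}(\ComodE)^{\wedge}_{I_n}}\bigl(\epsilon^*(\epsilon_* T)\bigr),
\]
where on the right one first forgets the $T$-module structure, using \cite[Corollary 3.4.1.9]{lurie-higher-algebra} as elsewhere in the paper.

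It then remains to identify the two module-defining algebras, which is where the completions must be shuffled around. Using \Cref{rem:qp_monoidal} and that $(-)^{\wedge}_{I_n}$ is symmetric monoidal one has $T\simeq P(E_0E^{\otimes(k-1)})^{\wedge}_{I_n}$. The analogue of the square \eqref{eq:commutative} for the left adjoints $\rho_*$, $\epsilon_*$ (also part of \Cref{prop:bs_complete}), together with $\rho_*(P(E_0E^{\otimes(k-1)}))\simeq P(E_0E^{\otimes(k-1)})$, gives $\epsilon_* T\simeq(P(E_0E)^{\wedge}_{I_n})^{\htimes(k-1)}$, the right-hand side of the proposition. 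For the other algebra, the square \eqref{eq:commutative} itself (so $\epsilon^*$ commutes with completion), followed by the identification $\rho^*(P(E_0E^{\otimes(k-1)}))\simeq P(E_0E^{\otimes k})$ — which is precisely the chain of equivalences carried out in the proof of \Cref{prop:module_categories_descendable} — gives
\[
\epsilon^*(\epsilon_* T)\simeq\epsilon^*\bigl(P(E_0E^{\otimes(k-1)})^{\wedge}_{I_n}\bigr)\simeq\bigl(\rho^*(P(E_0E^{\otimes(k-1)}))\bigr)^{\wedge}_{I_n}\simeq P(E_0E^{\otimes k})^{\wedge}_{I_n}\simeq(P(E_0E)^{\wedge}_{I_n})^{\htimes k},
\]
which is the left-hand side. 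Substituting both identifications into the displayed equivalence proves the proposition.

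The work is almost entirely bookkeeping; the one point needing genuine care is checking that \Cref{prop:bs_complete} really licenses each step where $(-)^{\wedge}_{I_n}$ is pushed past $\rho_*$, $\rho^*$ and the periodification functors — equivalently, that \eqref{eq:commutative} admits the left-adjoint companion used above — and that the monoidal Barr--Beck hypotheses needed for \Cref{prop:bs_module} survive the localization. Once these compatibilities are in hand, the argument is the word-for-word completed analogue of \Cref{prop:module_categories_descendable}.
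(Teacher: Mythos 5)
Your proposal is correct and follows essentially the same route as the paper: both start from the completed adjunction $(\epsilon_*,\epsilon^*)$ of \Cref{prop:complete_module}, apply \Cref{prop:bs_module} with $T=(P(E_0E)^{\wedge}_{I_n})^{\htimes(k-1)}$, and then identify $\epsilon_*T$ and $\epsilon^*\epsilon_*T$ with the two algebras in the statement via the commutativity of \eqref{eq:commutative}. The extra compatibility checks you flag are exactly the ones the paper delegates to \Cref{prop:bs_complete} and the square \eqref{eq:commutative}.
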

\begin{proof}
This is essentially the same proof as in \Cref{prop:module_categories_descendable}. We begin with the adjunction
\[
\epsilon_* \colon \dcat^{per}(\ComodE)^{\wedge}_{I_n} \longleftrightarrows \dcat^{per}(\Mod_{E_0})^{\wedge}_{I_n} \colon \epsilon^*
\]
used in \Cref{prop:complete_module} and apply \Cref{prop:bs_module} with $T = (P(E_0E)^{\wedge}_{I_n})^{\htimes (k-1)}$. This then gives an adjunction 
\[
\xi_* \colon \Mod_{\dcat^{per}(\ComodE)^{\wedge}_{I_n}}(T)\longleftrightarrows \Mod_{\dcat^{per}(\Mod_{E_0})^{\wedge}_{I_n}}(\epsilon_*(T))\colon \xi^*
\]
and an equivalence 
\[
\Mod_{\dcat^{per}(\Mod_{E_0})^{\wedge}_{I_n}}(\epsilon_*(T)) \simeq \Mod_{\dcat^{per}(\ComodE)^{\wedge}_{I_n}}(\xi^*\epsilon_*(T)).
\]
But 
\[\epsilon_*(T) = \epsilon_*((P(E_0E)^{\wedge}_{I_n})^{\htimes (k-1)}) \simeq (P(E_0E)^{\wedge}_{I_n})^{\htimes (k-1)},\]
 as we just forget the (completed) comodule structure, and (recalling that $\xi^*$ is just the same as $\epsilon^*$ after forgetting the module structure)
 \[
 \begin{split}
\xi^*\epsilon_*(T) &= \xi^*\epsilon_*((P(E_0E)^{\wedge}_{I_n})^{\htimes (k-1)})\\
&=\epsilon^*((P(E_0E)^{\wedge}_{I_n})^{\htimes (k-1)})
\end{split}
 \]
 Using the definitions and the commutativity of the square \eqref{eq:commutative}, this is the same as $(P(E_0E)^{\wedge}_{I_n})^{\htimes k}$, as required. 
\end{proof}
\begin{Thm}\label{thm:picss_complete}
	Suppose $p > n+1$, then the commutative algebra object $P(E_0E)^{\wedge}_{I_n}$ is descendable in $\dcat^{per}(\ComodE)^{\wedge}_{I_n} $, and hence there is an equivalence of presentably symmetric monoidal stable $\infty$-categories 
	\[
  \resizebox{1 \textwidth}{!} {$\displaystyle
\dcat^{per}(\ComodE)^{\wedge}_{I_n} \simeq \Tot\left(\!\begin{tikzcd}[ampersand replacement=\&]
	{\dcat^{per}(\Mod_{E_0})^{\wedge}_{I_n}} \& \Mod_{\dcat^{per}(\Mod_{E_0})^{\wedge}_{I_n}}(P(E_0E)^{\wedge}_{I_n}) \& \cdots
	\arrow[shift left=1, from=1-2, to=1-3]
	\arrow[shift right=1, from=1-1, to=1-2]
	\arrow[shift left=1, from=1-1, to=1-2]
	\arrow[shift right=1, from=1-2, to=1-3]
	\arrow[from=1-2, to=1-3]
\end{tikzcd} \right)
$}
\]
\end{Thm}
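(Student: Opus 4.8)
The plan is to bootstrap from the non-complete descent statement \Cref{thm:descent_franke} using that descendable algebras are inherited along symmetric monoidal functors, then feed the resulting descendable algebra into the general descent machinery of \cite{Mathew2016Galois}, and finally rewrite the terms of the Amitsur totalization using the module-category identifications already established in \Cref{prop:complete_module,prop:descendable_module_complete}.

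First I would dispatch the descendability claim. By \Cref{thm:descent_franke}, the hypothesis $p > n+1$ guarantees that $(E_0,E_0E)$ is a descendable Hopf algebroid, and then \Cref{rem:descent_periodic} shows that $P(E_0E) = P_{\Gamma}(\Gamma)$ is descendable in $\dcat^{per}(\ComodE)$. The localization functor $(-)^{\wedge}_{I_n} \colon \dcat^{per}(\ComodE) \to \dcat^{per}(\ComodE)^{\wedge}_{I_n}$ is symmetric monoidal and essentially surjective, and it carries the commutative algebra $P(E_0E)$ to $P(E_0E)^{\wedge}_{I_n}$; since symmetric monoidal functors of this kind preserve descendable algebras (\cite[Corollary 3.20]{Mathew2016Galois}), we conclude that $P(E_0E)^{\wedge}_{I_n}$ is descendable in $\dcat^{per}(\ComodE)^{\wedge}_{I_n}$.

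Writing $\cat C := \dcat^{per}(\ComodE)^{\wedge}_{I_n}$ and $R := P(E_0E)^{\wedge}_{I_n} \in \CAlg(\cat C)$, I would then invoke \cite[Proposition 3.22]{Mathew2016Galois}: since $R$ is descendable, the canonical map to the totalization of the Amitsur (cobar) cosimplicial object is an equivalence of presentably symmetric monoidal stable $\infty$-categories,
\[
\cat C \simeq \Tot\bigl(\Mod_{\cat C}(R) \rightrightarrows \Mod_{\cat C}(R^{\htimes 2}) \rightthreearrow \cdots\bigr).
\]
It remains to identify the terms. By \Cref{prop:complete_module} we have $\Mod_{\cat C}(R) \simeq \dcat^{per}(\Mod_{E_0})^{\wedge}_{I_n}$, and by \Cref{prop:descendable_module_complete}, for $k \ge 2$, $\Mod_{\cat C}(R^{\htimes k}) \simeq \Mod_{\dcat^{per}(\Mod_{E_0})^{\wedge}_{I_n}}(R^{\htimes (k-1)})$. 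Substituting these into the totalization produces exactly the cosimplicial diagram in the statement.

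The real work — and the step I expect to be the main obstacle — is verifying that these level-wise equivalences are compatible with the coface and codegeneracy maps, so that they assemble into an equivalence of cosimplicial $\infty$-categories and can be fed through $\Tot$. This is entirely parallel to the bookkeeping in the proof of \Cref{thm:descent_hopf}: the module-category identifications in \Cref{prop:complete_module,prop:descendable_module_complete} are all instances of the base-change construction \Cref{prop:bs_module} applied along maps of commutative algebras in $\cat C$, and these are natural in the maps involved, so the required coherences propagate automatically. Presentability and the symmetric monoidal structure are taken care of by the framework of \cite{Mathew2016Galois}.
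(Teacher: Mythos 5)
Your proposal is correct and follows essentially the same route as the paper: descendability of $P(E_0E)^{\wedge}_{I_n}$ is deduced from descendability of $P(E_0E)$ via \cite[Corollary 3.20]{Mathew2016Galois}, the totalization comes from \cite[Proposition 3.22]{Mathew2016Galois}, and the terms are identified using \Cref{prop:complete_module} and \Cref{prop:descendable_module_complete}. The coherence bookkeeping you flag is indeed handled implicitly by the naturality of the base-change constructions, exactly as in \Cref{thm:descent_hopf}.
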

\begin{proof}
 Because $P(E_0E)$ is descendable in $\dcat^{per}(\ComodE)$ \cite[Corollary 3.20]{Mathew2016Galois} gives that $P(E_0E)^{\wedge}_{I_n} \in \dcat^{per}(\ComodE)^{\wedge}_{I_n}$ is descendable. Applying \cite[Proposition 3.22]{Mathew2016Galois} we then have an equivalence between $\dcat^{per}(\ComodE)^{\wedge}_{I_n}$ and the totalization of 
\[
\resizebox{1 \textwidth}{!} 
{
\begin{tikzcd}[sep = small,ampersand replacement=\&] 
{\Mod_{\dcat^{per}(\ComodE)^{\wedge}_{I_n}}(P(E_0E)^{\wedge}_{I_n}}) \& {\Mod_{\dcat^{per}(\ComodE)^{\wedge}_{I_n}}((P(E_0E)^{\wedge}_{I_n})^{\htimes 2})} \& \cdots
	\arrow[shift right=1, from=1-1, to=1-2]
	\arrow[shift left=1, from=1-1, to=1-2]
	\arrow[from=1-2, to=1-3]
	\arrow[shift left=1, from=1-2, to=1-3]
	\arrow[shift right=1, from=1-2, to=1-3]
\end{tikzcd}
}
\]
Now apply \Cref{prop:complete_module} and \Cref{prop:descendable_module_complete}. 
\end{proof}
\begin{Cor}\label{cor:i_n_picard_tot}
  For $p > n+1$, then there is an equivalence of connective spectra between $\pics(\dcat^{per}(\Comod_{E_0E})^{\wedge}_{I_n}) $ and 
  \[
\tau_{\ge 0}\Tot\left(\adjustbox{scale=0.94}{%
\begin{tikzcd}
{\pics(\dcat^{per}(\Mod_{E_0})^{\wedge}_{I_n})} & {\pics(\Mod_{\dcat^{per}(\Mod_{E_0})^{\wedge}_{I_n}}(P(E_0E)^{\wedge}_{I_n}))} & \cdots
  \arrow[shift left=1, from=1-2, to=1-3]
  \arrow[shift right=1, from=1-1, to=1-2]
  \arrow[shift left=1, from=1-1, to=1-2]
  \arrow[shift right=1, from=1-2, to=1-3]
  \arrow[from=1-2, to=1-3]
\end{tikzcd}} \right).
\]

\end{Cor}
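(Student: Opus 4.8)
The plan is to deduce this directly from \Cref{thm:picss_complete} together with the fact, recalled in \Cref{rem:picard_spectrum_properties}, that $\pics \colon \Cat^{\otimes} \to \Sp_{\ge 0}$ preserves limits \cite[Proposition 2.2.3]{MathewStojanoska2016Picard}. Concretely, \Cref{thm:picss_complete} presents $\dcat^{per}(\ComodE)^{\wedge}_{I_n}$ as the totalization of the cosimplicial object of symmetric monoidal stable $\infty$-categories whose $0$-th term is $\dcat^{per}(\Mod_{E_0})^{\wedge}_{I_n}$ and whose $k$-th term for $k \ge 1$ is $\Mod_{\dcat^{per}(\Mod_{E_0})^{\wedge}_{I_n}}((P(E_0E)^{\wedge}_{I_n})^{\htimes k})$. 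Since $\Tot$ is the limit over $\Delta$ of this diagram, applying the limit-preserving functor $\pics$ identifies $\pics(\dcat^{per}(\ComodE)^{\wedge}_{I_n})$ with the limit over $\Delta$ of the cosimplicial connective spectrum obtained by applying $\pics$ in each cosimplicial degree, which is exactly the cosimplicial spectrum appearing in the statement.

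The only subtlety is connectivity. A totalization (cosimplicial limit) of connective spectra need not be connective, so one must distinguish between computing the limit in $\Sp_{\ge 0}$ and in the category $\Sp$ of all spectra. The inclusion $\Sp_{\ge 0} \hookrightarrow \Sp$ preserves limits and $\tau_{\ge 0}$ is its right adjoint, so the limit in $\Sp_{\ge 0}$ — which is what $\pics$ computes — agrees with $\tau_{\ge 0}$ applied to the limit in $\Sp$. This produces precisely the stated formula, with $\tau_{\ge 0}$ in front of the $\Tot$.

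There is essentially no obstacle here beyond this bookkeeping: all of the real content lies in \Cref{thm:picss_complete} (in particular the descendability of $P(E_0E)^{\wedge}_{I_n}$, inherited from that of $P(E_0E)$ via \cite[Corollary 3.20]{Mathew2016Galois}), and the corollary is simply the $I_n$-complete counterpart of \Cref{prop:totalization_pic_descendable} and \Cref{prop:totalization_pic}. The next step, carried out in the following sections, will be to run the associated Bousfield--Kan spectral sequence of this totalization and to identify its $E_2$-page in terms of the continuous cohomology of the Morava stabilizer group $\mathbb{G}_n$, in close analogy with \Cref{cor:en_ss}.
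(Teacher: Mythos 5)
Your proposal is correct and matches the paper's (implicit) argument exactly: the corollary is stated without proof immediately after \Cref{thm:picss_complete}, and the analogous \Cref{prop:totalization_pic_descendable} is proved in the paper by precisely the same two-step reasoning — apply the limit-preserving functor $\pics$ to the totalization and account for the $\tau_{\ge 0}$ because limits of connective spectra are computed as the connective cover of the limit in all spectra. Your explicit treatment of the connectivity bookkeeping is a welcome clarification but introduces nothing different in substance.
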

Before we can identify the $E_2$-term of the associated spectral sequence, we need to identify the relevant endomorphism rings. 
\begin{Lem}\label{lem:endo_1}
  The endomorphism ring of the unit
  \[
\pi_*\Hom_{\dcat^{per}(\Mod_{E_0})^{\wedge}_{I_n}}(P(E_0)^{\wedge}_{I_n},P(E_0)^{\wedge}_{I_n}) \cong E_*
  \]
\end{Lem}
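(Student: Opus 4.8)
The plan is to reduce to the uncompleted statement, which is already recorded in \Cref{Rem:endomorphism_spectrum}, and then to observe that $I_n$-completion leaves the answer unchanged because $E_*$ is already $I_n$-complete.

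First, the tensor unit of $\dcat^{per}(\Mod_{E_0})^{\wedge}_{I_n}$ is $P(E_0)^{\wedge}_{I_n}$, and since $(-)^{\wedge}_{I_n}$ is left adjoint to the inclusion of $I_n$-complete objects, the localization adjunction yields a natural equivalence of mapping spectra
\[
\Hom_{\dcat^{per}(\Mod_{E_0})^{\wedge}_{I_n}}\!\bigl(P(E_0)^{\wedge}_{I_n}, P(E_0)^{\wedge}_{I_n}\bigr) \simeq \Hom_{\dcat^{per}(\Mod_{E_0})}\!\bigl(P(E_0), P(E_0)^{\wedge}_{I_n}\bigr).
\]
Because $P(E_0)$ is the tensor unit of $\dcat^{per}(\Mod_{E_0})$, which by the module description of \Cref{prop:br_module} is compactly generated by it, the functor $\pi_*\Hom_{\dcat^{per}(\Mod_{E_0})}(P(E_0),-)$ is just the underlying homology, so $\pi_*$ of the right-hand side is $H_*(P(E_0)^{\wedge}_{I_n})$. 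Moreover $(-)^{\wedge}_{I_n}$ is symmetric monoidal, hence induces a ring homomorphism $\End_{\dcat^{per}(\Mod_{E_0})}(P(E_0)) \to \End_{\dcat^{per}(\Mod_{E_0})^{\wedge}_{I_n}}(P(E_0)^{\wedge}_{I_n})$; since by \Cref{Rem:endomorphism_spectrum} (taking $k=0$) the source has homotopy ring $E_*$, it suffices to show that the induced map $E_* \cong H_*(P(E_0)) \to H_*(P(E_0)^{\wedge}_{I_n})$ is an isomorphism of rings.

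For this I would invoke \Cref{rem:complete_free}: since $P(E_0)\cong E_*$ in $\dcat(\Mod_{E_0})$ (as noted just before \Cref{thm:descent_franke}), its homology $E_*$ is a free, hence flat, module over the noetherian ring $E_0$, so the local-homology spectral sequence of \Cref{rem:complete_free} collapses and gives $H_*(P(E_0)^{\wedge}_{I_n}) \cong (E_*)^{\wedge}_{I_n}$, compatibly with the completion map. Finally $(E_*)^{\wedge}_{I_n} \cong E_*$: the ring $E_0 = \mathbb{W}(\mathbb{F}_{p^n})[\![u_1,\ldots,u_{n-1}]\!]$ is a power series ring over a complete discrete valuation ring, hence $I_n$-adically complete, and as $I_n$ is generated by elements of degree zero the $I_n$-adic completion of $E_* = E_0[u^{\pm1}]$ is computed one graded degree at a time, where it is a single copy of $E_0$. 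I expect the only point requiring real care to be this last one: the grading is precisely what prevents the classical $I_n$-adic completion of the infinitely generated free module $E_*$ from enlarging it to a completed direct sum, because each homogeneous piece of $E_*$ is just one copy of the complete local ring $E_0$ and the inverse limit defining the completion may be formed degreewise.
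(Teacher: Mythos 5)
Your proposal is correct and follows essentially the same route as the paper: reduce via the localization adjunction to $\Hom_{\dcat^{per}(\Mod_{E_0})}(P(E_0),P(E_0)^{\wedge}_{I_n})$, identify this with the underlying homology of $P(E_0)^{\wedge}_{I_n}$ via the $(P,U)$ adjunction, apply \Cref{rem:complete_free} using freeness of $E_*$ over $E_0$, and conclude since $E_*$ is already $I_n$-complete by the standing convention that $E$ is Morava $E$-theory. Your extra care about the degreewise nature of the completion of the infinitely generated free module $E_*$ is a reasonable elaboration of the paper's terser ``but this is already $I_n$-complete.''
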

\begin{proof}
  This follows by adjunction:
\[
\begin{split}
\pi_*\Hom_{\dcat^{per}(\Mod_{E_0})^{\wedge}_{I_n}}(P(E_0)^{\wedge}_{I_n},P(E_0)^{\wedge}_{I_n}) & \cong \pi_*\Hom_{\dcat^{per}(\Mod_{E_0})}(P(E_0),P(E_0)^{\wedge}_{I_n})  \\
 & \cong \pi_*\Hom_{\dcat(\Mod_{E_0})}(E_0,P(E_0)^{\wedge}_{I_n})\\
 & \cong H^*((E_*)^{\wedge}_{I_n})
\end{split}
\]
Because $E_*$ is a free $E_0$-module, \Cref{rem:complete_free} implies that the endomorphism ring is isomorphic to the $I_n$-adic completion of $E_*$, but this is already $I_n$-complete, and the result follows. 
\end{proof}
\begin{Lem}\label{lem:end_2}
Let $\cat C = \Mod_{\dcat^{per}(\Mod_{E_0})^{\wedge}_{I_n}}((P(E_0E)^{\wedge}_{I_n})^{\htimes k})$.  The endomorphism ring of the unit
  \[
\pi_*\Hom_{\cat C}(\unit,\unit) \cong (E_*E^{\otimes k})^{\wedge}_{I_n} \cong \Hom^c(\mathbb{G}_n^{\times k},E_*).   
  \]
\end{Lem}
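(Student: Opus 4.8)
The plan is to run the argument of \Cref{lem:endo_1} one category level up, and then feed in Morava's computation of $E_0E$. Write $T \coloneqq (P(E_0E)^{\wedge}_{I_n})^{\htimes k}$ for the unit of $\cat C$. Since $\cat C = \Mod_{\dcat^{per}(\Mod_{E_0})^{\wedge}_{I_n}}(T)$, whose base has unit $P(E_0)^{\wedge}_{I_n}$ (\Cref{lem:prop_internal_complete}), the free--forgetful adjunction gives $\pi_*\Hom_{\cat C}(\unit,\unit)\cong \pi_*\Hom_{\dcat^{per}(\Mod_{E_0})^{\wedge}_{I_n}}(P(E_0)^{\wedge}_{I_n}, T)$. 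Because $T$ is already $I_n$-complete, this equals $\pi_*\Hom_{\dcat^{per}(\Mod_{E_0})}(P(E_0), T)$, now viewing $T$ in the uncompleted category; applying the $(P,U)$-adjunction of \Cref{rem:pu_adjunction} and the fact that $\Hom_{\dcat(\Mod_{E_0})}(E_0,-)$ computes homology, this reduces to the homology of the $E_0$-module complex $UT$.

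Next I would identify $UT$. Since the localized tensor product is $X\htimes Y = L_{P(E_0/I_n)}(X\otimes Y)$ and the tensor product of $I_n$-completion equivalences is again such an equivalence, $T\simeq (P(E_0E)^{\otimes k})^{\wedge}_{I_n}$; by \Cref{rem:qp_monoidal} $P(E_0E)^{\otimes k}\simeq P(E_0E^{\otimes k})$, and the same computation as in the Lemma preceding \Cref{thm:descent_franke} gives $UP(E_0E^{\otimes k})\cong E_*E^{\otimes k}$, a complex concentrated in homological degree $0$ with zero differential. Using that completion commutes with the forgetful functor (the commutative square in the Remark following \Cref{def:complete_categories}), we get $UT\simeq (E_*E^{\otimes k})^{\wedge}_{I_n}$, the derived $I_n$-completion in $\dcat(\Mod_{E_0})$. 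Now $E_0E$ is flat over $E_0$ by Landweber exactness, hence so is $E_0E^{\otimes k}$, and hence so is $E_*E^{\otimes k}=E_*\otimes_{E_0}E_0E^{\otimes k}$ since $E_*=E_0[u^{\pm1}]$ is free over $E_0$; so by \Cref{rem:complete_free} the local-homology spectral sequence collapses and $H_*(UT)\cong (E_*E^{\otimes k})^{\wedge}_{I_n}$, concentrated in degree $0$ and equal to the ordinary $I_n$-adic completion. This gives the first isomorphism.

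For the second isomorphism, since each graded piece of $E_*$ is free of rank one over $E_0$, completion commutes with $E_*\otimes_{E_0}-$, so $(E_*E^{\otimes k})^{\wedge}_{I_n}\cong E_*\otimes_{E_0}(E_0E^{\otimes k})^{\wedge}_{I_n}$. Morava's theorem $(E_0E)^{\wedge}_{I_n}\cong \Hom^c(\mathbb{G}_n,E_0)$ (\cite[Theorem 12]{Strickl2000GrossHopkins}, \cite{Hovey2004Operations}), together with the argument of \cite[Appendix II]{Devinatz1995Moravas} exactly as used in the proof of \Cref{thm:pic_mrw} but without reducing modulo $I_n$, upgrades to $(E_0E^{\otimes k})^{\wedge}_{I_n}\cong \Hom^c(\mathbb{G}_n^{\times k},E_0)$; tensoring with $E_*$ over $E_0$ yields $\Hom^c(\mathbb{G}_n^{\times k},E_*)$, as claimed. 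The main difficulty is the bookkeeping in the middle step --- making sure the underlying object of the $k$-fold \emph{completed} tensor power $T$ really is the derived $I_n$-completion of the discrete module $E_*E^{\otimes k}$, which forces one to juggle the compatibilities of $U$, $P$, the completion functor, and $\htimes$ --- together with checking the flatness inputs; the only genuinely non-formal ingredient is $(E_0E^{\otimes k})^{\wedge}_{I_n}\cong \Hom^c(\mathbb{G}_n^{\times k},E_0)$, which is the same $k$-fold Devinatz-type statement already relied on in \Cref{thm:pic_mrw}.
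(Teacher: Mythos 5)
Your proposal is correct and follows essentially the same route as the paper: the same chain of adjunctions reducing $\pi_*\Hom_{\cat C}(\unit,\unit)$ to the homology of the $I_n$-completion of $E_*E^{\otimes k}$, the same flatness input via \Cref{rem:complete_free}, and the same appeal to Morava/Hovey--Strickland plus the Devinatz appendix argument for the identification with $\Hom^c(\mathbb{G}_n^{\times k},E_*)$. The only cosmetic difference is that you tensor up from $(E_0E^{\otimes k})^{\wedge}_{I_n}$ by $E_*$ at the end, whereas the paper works throughout with the $L$-complete tensor product $\boxtimes$ on $E_*$-level objects and identifies $H_0^{I_n}(E_*E)$ with $E^{\vee}_*E$ directly.
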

\begin{proof}
  This is similar to the previous lemma: first, we note that by adjunction
\[
\begin{split}
\pi_*\Hom_{\cat C}(\unit,\unit) & \cong \pi_*\Hom_{\dcat^{per}(\Mod_{E_0})^{\wedge}_{I_n}}(P(E_0)^{\wedge}_{I_n},(P(E_0E)^{\wedge}_{I_n})^{\htimes k}) \\ & \cong \pi_*\Hom_{\dcat^{per}(\Mod_{E_0})}(P(E_0),P(E_0E)^{\wedge}_{I_n})^{\htimes k}) \\
& \cong \pi_*\Hom_{\dcat(\Mod_{E_0})}(E_0,P(E_0E)^{\wedge}_{I_n})^{\htimes k}) \\
& \cong H^*((E_*E)^{\wedge}_{I_n})^{\htimes k}). 
\end{split}
\]
Using that $\htimes$ is the symmetric monoidal structure, we have $$((E_*E)^{\wedge}_{I_n})^{\htimes k} \simeq (E_*E^{\otimes k})^{\wedge}_{I_n}$$. Once again we note that $E_*E^{\otimes k}$ is a flat $E_0$-module (as the tensor product of such modules), so that \Cref{rem:complete_free} gives that the endomorphism ring is isomorphic to $H_0^{I_n}(E_*E^{\otimes k}) \cong (E_*E^{\otimes k})^{\wedge}_{I_n}$ as claimed. 

In order to prove the last part of the lemma, let us temporarily introduce the notation $M \boxtimes N \coloneqq H_0^{I_n}(M \otimes N)$; this gives a symmetric monoidal structure on the category of $L$-complete $E_0$-modules, see \cite[Corollary A.7]{HoveyStrickl1999Morava}. Then, 
\[
H_0^{I_n}(E_*E^{\otimes k}) \cong E_*E^{\boxtimes k}
\]
To identify this further we note that $H_0^{I_n}(E_*E) \cong E^{\vee}_*E \coloneqq \pi_*L_{K(n)}(E \wedge E) \cong \Hom^c(\mathbb{G}_n,E_*)$. This is a combination of results; the first follows from the (collapsing) spectral sequence \cite[Theorem 2.3]{Hovey2008Morava}, while the second is then the main theorem of \cite{Hovey2004Operations}. Then,
\[
E_*E^{\boxtimes k} \cong E^{\vee}_*E^{\boxtimes k} \cong \Hom^c(\mathbb{G}_n,E_*)^{\boxtimes k} \cong \Hom^c(\mathbb{G}_n^{\times k},E_*),
\]
where the last step follows from the argument in the appendix of \cite{Devinatz1995Moravas}. 
\end{proof}
%\begin{Prop}
	%A complex $X \in \dcat^{per}(\ComodE)^{\wedge}_{I_n}$ is invertible if and only $H^*(\epsilon^*X)$ is a free $E_*$-module of rank 1. 
%\end{Prop}

\begin{Thm}\label{cor:kn_ss}
Suppose $p > n+1$, then there is a spectral sequence
	\[
E_2^{s,t} \cong \begin{cases}
	\mathbb{Z}/2 & s = t = 0 \\
	H^s(\mathbb{G}_n,E_0^{\times}) & t = 1\\
H^s(\mathbb{G}_n,E_{t-1}) & t \ge 2. 
\end{cases}
	\]
	which converges for $t -s \ge 0$ to $\pi_{t-s}\pics(\dcat^{per}(\ComodE)^{\wedge}_{I_n})$. The differentials run $d_r \colon E_r^{s,t} \to E_r^{s+r,t+r-1}$. 
\end{Thm}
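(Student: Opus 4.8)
The plan is to realize this as the Bousfield--Kan spectral sequence of the cosimplicial connective spectrum obtained by applying $\pics(-)$ levelwise to the cosimplicial diagram underlying the totalization of \Cref{cor:i_n_picard_tot}; the whole argument is the $I_n$-completed analogue of the proof of \Cref{cor:en_ss}, hence ultimately of \cite[Theorem 3.4.3]{MathewStojanoska2016Picard}. Write $\cat C_0 \coloneqq \dcat^{per}(\Mod_{E_0})^{\wedge}_{I_n}$ and, for $k \ge 1$, $\cat C_k \coloneqq \Mod_{\dcat^{per}(\Mod_{E_0})^{\wedge}_{I_n}}((P(E_0E)^{\wedge}_{I_n})^{\htimes k})$, so that \Cref{cor:i_n_picard_tot} identifies $\pics(\dcat^{per}(\ComodE)^{\wedge}_{I_n})$ with $\tau_{\ge 0}$ of $\Tot$ of $[k] \mapsto \pics(\cat C_k)$. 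The associated spectral sequence has $E_1^{s,t} = \pi_t\pics(\cat C_s)$, $E_2^{s,t} = H^s$ of the cochain complex $[k] \mapsto \pi_t\pics(\cat C_k)$, and differentials $d_r \colon E_r^{s,t} \to E_r^{s+r,t+r-1}$. Since each $\pics(\cat C_k)$ is connective, so is the homotopy limit, and therefore the spectral sequence converges to $\pi_{t-s}\pics(\dcat^{per}(\ComodE)^{\wedge}_{I_n})$ in the range $t - s \ge 0$.

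The heart of the proof is identifying the $E_2$-page in the three regimes. Using the description of the homotopy groups of $\pics$ recalled in \Cref{rem:picard_spectrum_properties} together with \Cref{lem:endo_1} and \Cref{lem:end_2}: for $t \ge 2$ one has $\pi_t\pics(\cat C_k) \cong \pi_{t-1}\End_{\cat C_k}(\unit,\unit) \cong \big((E_*E^{\otimes k})^{\wedge}_{I_n}\big)_{t-1} \cong \Hom^c(\mathbb{G}_n^{\times k}, E_{t-1})$; for $t = 1$ one has the unit groups $\pi_1\pics(\cat C_k) \cong \pi_0\End_{\cat C_k}(\unit,\unit)^{\times} \cong \big((E_0E^{\otimes k})^{\wedge}_{I_n}\big)^{\times} \cong \Hom^c(\mathbb{G}_n^{\times k}, E_0^{\times})$; and $\pi_0\pics(\cat C_k) = \Pic(\cat C_k)$, with $\Pic(\cat C_0) \cong \mathbb{Z}/2$ by \Cref{lem:local_pic}. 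It then remains to recognize the cochain differential: exactly as in the proof of \Cref{cor:en_ss}, the coface maps are the (completed) cobar differentials of the cosimplicial object $[k] \mapsto (E_0 E^{\otimes k})^{\wedge}_{I_n}$, and under the identification $(E_0E^{\otimes k})^{\wedge}_{I_n} \cong \Hom^c(\mathbb{G}_n^{\times k}, E_0)$ used in \Cref{lem:end_2} and in the proof of \Cref{thm:pic_mrw} (following the appendix of \cite{Devinatz1995Moravas}) these become the standard inhomogeneous continuous cochain differentials for $\mathbb{G}_n$ acting on $E_{t-1}$, respectively on $E_0^{\times}$. This yields $E_2^{s,t} \cong H^s(\mathbb{G}_n, E_{t-1})$ for $t \ge 2$, $E_2^{s,1} \cong H^s(\mathbb{G}_n, E_0^{\times})$, and $E_2^{0,0} \cong \mathbb{Z}/2$; the entries with $t = 0$ and $s \ge 1$ lie outside the range $t - s \ge 0$, so they are not recorded in the statement.

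The main obstacle will be matching the descent coface maps with the continuous cochain differentials of $\mathbb{G}_n$ after $I_n$-completion, and in particular verifying that completion is compatible with the identifications $(E_*E^{\otimes k})^{\wedge}_{I_n} \cong \Hom^c(\mathbb{G}_n^{\times k}, E_*)$ and introduces no derived-completion ($\lim^1$ or local-homology) correction terms. This is exactly where the flatness of $E_*E^{\otimes k}$ over $E_0$ and \Cref{rem:complete_free} are needed, along with the collapse of the Hovey spectral sequence \cite[Theorem 2.3]{Hovey2008Morava} invoked in \Cref{lem:end_2}, and where the iterated identification of the $\Hom^c(\mathbb{G}_n,-)$-type objects follows the appendix of \cite{Devinatz1995Moravas}. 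A minor further point is that the $t = 1$ line genuinely carries coefficients in the units $E_0^{\times}$ rather than in $E_0$; this holds because passing to groups of units commutes with the profinite limits defining $\Hom^c(\mathbb{G}_n^{\times k}, -)$ and with the cosimplicial structure maps.
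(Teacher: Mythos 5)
Your proposal is correct and takes essentially the same route as the paper: the Bousfield--Kan spectral sequence of the levelwise Picard spectra of the descent tower from \Cref{cor:i_n_picard_tot}, with the $E_2$-identification via \Cref{lem:local_pic}, \Cref{lem:endo_1}, and \Cref{lem:end_2} and the recognition of the coface maps as the continuous cobar differentials computing $H^*(\mathbb{G}_n,-)$. The completion-compatibility issues you flag as the main obstacle are exactly what the paper disposes of in \Cref{lem:end_2} via flatness, \Cref{rem:complete_free}, and the Devinatz--Hopkins-style identification of $(E_*E^{\otimes k})^{\wedge}_{I_n}$ with $\Hom^c(\mathbb{G}_n^{\times k},E_*)$, so nothing further is needed.
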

\begin{proof}
This is the Bousfield--Kan spectral sequence associated to the tower of \Cref{cor:i_n_picard_tot}.  When $s = t = 0$, the identification of the $E_2$-term is a computation of $\Pic(\dcat^{per}(\Comod_{E_0E})^{\wedge}_{I_n})$, for which see \Cref{lem:local_pic}. For $t \ge 2$, we use \Cref{lem:endo_1,lem:end_2} and the definition of the Picard spectrum (in particular, \eqref{pic:defn}), to compute that the $E_2$-term in this range is the cohomology of the complex
% https://q.uiver.app/?q=WzAsNCxbMCwwLCJFX3t0LTF9Il0sWzEsMCwiXFxIb21eYyhcXG1hdGhiYntHfV9uLEVfe3QtMX0pIl0sWzIsMCwiXFxIb21eYyhcXG1hdGhiYntHfV9uIFxcdGltZXMgXFxtYXRoYmJ7R31fbixFX3t0LTF9KSJdLFszLDAsIlxcY2RvdHMiXSxbMCwxXSxbMSwyLCIiLDAseyJvZmZzZXQiOi0xfV0sWzEsMiwiIiwwLHsib2Zmc2V0IjoxfV0sWzIsMywiIiwwLHsib2Zmc2V0IjotMX1dLFsyLDMsIiIsMCx7Im9mZnNldCI6MX1dLFsyLDNdXQ==
\[\begin{tikzcd}
  {E_{t-1}} & {\Hom^c(\mathbb{G}_n,E_{t-1})} & {\Hom^c(\mathbb{G}_n \times \mathbb{G}_n,E_{t-1})} & \cdots
  \arrow[from=1-1, to=1-2]
  \arrow[shift left=1, from=1-2, to=1-3]
  \arrow[shift right=1, from=1-2, to=1-3]
  \arrow[shift left=1, from=1-3, to=1-4]
  \arrow[shift right=1, from=1-3, to=1-4]
  \arrow[from=1-3, to=1-4]
\end{tikzcd}\]
which is the standard cobar complex computing $H^*(\mathbb{G}_n,E_{t-1})$. The argument for $t = 1$ is similar; one just needs to note that the units of $\Hom^c(\mathbb{G}_n^{\times (k-1)},E_0)$ are exactly $\Hom^c(\mathbb{G}_n^{\times (k-1)},E_0^{\times})$. 
\end{proof}
\begin{Thm}\label{thm:main_pic_local}
	Suppose that $2p-2 \ge n^2$ and $(p-1) \nmid n$, then there is a short exact sequence 
	\[
0 \to H^1(\mathbb{G}_n,E_0^{\times}) \to \Pic(\dcat^{per}(\ComodE)^{\wedge}_{I_n}) \to \mathbb{Z}/2 \to 0.
	\]\end{Thm}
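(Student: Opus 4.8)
The plan is to extract the $0$-stem of the Bousfield--Kan spectral sequence of \Cref{cor:kn_ss} (the hypotheses force $p>n+1$, so that result applies), which computes $\Pic(\dcat^{per}(\ComodE)^{\wedge}_{I_n})=\pi_0\pics(\dcat^{per}(\ComodE)^{\wedge}_{I_n})$. That spectral sequence is concentrated in the columns $0\le s\le n^2$ --- this is the cohomological dimension bound $H^s(\mathbb{G}_n,-)=0$ for $s>n^2$, valid because $(p-1)\nmid n$ (quoted modulo $I_n$ in \Cref{exa:height_n}; the statement with $E_*$-coefficients follows by filtering $E_0$ by powers of $I_n$, whose graded pieces are free $E_0/I_n$-modules, and passing to the complete limit) --- so the induced filtration on $\Pic(\dcat^{per}(\ComodE)^{\wedge}_{I_n})$ is finite, with associated graded pieces the groups $E_\infty^{s,s}$ for $s\ge 0$; these are subquotients of $E_2^{0,0}\cong\mathbb{Z}/2$, of $E_2^{1,1}\cong H^1(\mathbb{G}_n,E_0^{\times})$, and of $E_2^{s,s}\cong H^s(\mathbb{G}_n,E_{s-1})$ for $s\ge 2$. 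It therefore suffices to show that $E_\infty^{s,s}=0$ for $s\ge 2$, that $E_\infty^{0,0}=\mathbb{Z}/2$, and that $E_\infty^{1,1}=H^1(\mathbb{G}_n,E_0^{\times})$.

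For the vanishing of $E_2^{s,s}$ with $s\ge 2$: when $s$ is even, $E_{s-1}$ lies in an odd internal degree and so vanishes, since $E_*$ is even-periodic. When $s$ is odd and $\ge 3$, I would invoke the sparseness of $H^*(\mathbb{G}_n,E_*)$. Concretely, the finite central subgroup $\mu_{p-1}\subset\mathbb{Z}_p^{\times}\subseteq Z(\mathbb{G}_n)$ acts $E_0$-linearly on $E_{2k}=E_0\cdot u^{k}$ through the character $a\mapsto a^{k}$; since $|\mu_{p-1}|$ is a unit in $E_0$ and $a^{k}-1$ is a unit whenever $(p-1)\nmid k$, the Lyndon--Hochschild--Serre spectral sequence for $\mu_{p-1}\trianglelefteq\mathbb{G}_n$ gives $H^*(\mathbb{G}_n,E_{2k})=0$ unless $2(p-1)\mid 2k$. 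Applying this with $2k=s-1$: either $2(p-1)\nmid(s-1)$, so $H^s(\mathbb{G}_n,E_{s-1})=0$ directly, or $2(p-1)\mid(s-1)$, which forces $s\ge 2p-1>2p-2\ge n^2$, so $H^s(\mathbb{G}_n,E_{s-1})=0$ by the cohomological dimension bound. Hence $E_2^{s,s}=0$, and so $E_\infty^{s,s}=0$, for all $s\ge 2$.

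It then remains to check that no differential leaves $E_r^{0,0}$ or $E_r^{1,1}$ (no differential can enter these spots, since the relevant sources have negative filtration). For $E^{0,0}$: the generator of $E_2^{0,0}=\Pic(\dcat^{per}(\Mod_{E_0})^{\wedge}_{I_n})$ is $P(E_0)^{\wedge}_{I_n}[1]\cong E_*[1]$ by \Cref{lem:local_pic}, and this is the image, under the symmetric monoidal augmentation $\dcat^{per}(\ComodE)^{\wedge}_{I_n}\to\dcat^{per}(\Mod_{E_0})^{\wedge}_{I_n}$, of the shift of the unit, which is an invertible object; thus the generator lifts to $\Pic$ and is a permanent cycle, so $E_\infty^{0,0}=\mathbb{Z}/2$. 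For $E^{1,1}$: a differential $d_r$ out of it lands in $E_r^{1+r,r}$, a subquotient of $H^{1+r}(\mathbb{G}_n,E_{r-1})$, which vanishes by exactly the dichotomy of the previous paragraph ($E_{r-1}=0$ when $r$ is even; otherwise either $2(p-1)\nmid(r-1)$, or $r\ge 2p-1$ and hence $1+r>n^2$). So $E_\infty^{1,1}=H^1(\mathbb{G}_n,E_0^{\times})$. Assembling the (finite) filtration now gives $F^2=0$, $F^1=H^1(\mathbb{G}_n,E_0^{\times})$ and $F^0/F^1=\mathbb{Z}/2$, i.e.\ the asserted short exact sequence
\[
0 \to H^1(\mathbb{G}_n,E_0^{\times}) \to \Pic(\dcat^{per}(\ComodE)^{\wedge}_{I_n}) \to \mathbb{Z}/2 \to 0.
\]
I expect the main obstacle to be the sparseness input of the second paragraph: it rests on knowing precisely that the centre of the Morava stabilizer group acts trivially on $E_0$ and by scalars on $u$, together with the cohomological dimension bound in the form needed for $E_*$-coefficients; once these are in hand, the vanishing of the higher terms and of all relevant differentials is formal, and the short exact sequence drops out of the filtration.
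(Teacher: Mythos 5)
Your proposal is correct and follows essentially the same route as the paper: the Bousfield--Kan spectral sequence of \Cref{cor:kn_ss}, the horizontal vanishing line at $s=n^2$ from $(p-1)\nmid n$, and sparseness via the central $\mu_{p-1}\subseteq\mathbb{G}_n$ to kill everything in the $0$-stem above filtration $1$. The only difference is that you spell out more carefully why no differentials leave $E_r^{0,0}$ and $E_r^{1,1}$ (in particular the permanent-cycle argument for the generator of $E_2^{0,0}$ via lifting the shifted unit, which handles the potential $d_2$ into $H^2(\mathbb{G}_n,E_0^{\times})$ that the paper's sparseness remark does not literally address), which is a welcome refinement rather than a departure.
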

	\begin{proof}
		The condition that $2p-2 \ge n^2$ implies that $p > n+1$ except when $n = 1, p=2$ but this case is ruled out by the condition that $p-1 \nmid n$. In other words, the conditions of the theorem imply that $p > n+1$ always holds, so that \Cref{thm:picss_complete,cor:kn_ss} apply. The condition that $(p-1) \nmid n$ implies that the spectral sequence has a horizontal vanishing line above $s = n^2$ \cite[Theorem 6.2.10]{Ravenel1986Complex}. Moreover, a standard sparseness argument shows that $H^s(\mathbb{G}_n,E_t) = 0$ unless $t$ is divisible by $2p-2$. Indeed, $p$ is always odd, and we consider the Hochschild--Serre spectral sequence associated to the central subgroup $\mathbb{Z}_p^{\times} \simeq \mu_{p-1} \oplus (1+p\mathbb{Z}_p)^{\times} \subseteq \mathbb{G}_n$, i.e., the spectral sequence 
		\[
H^p(\mathbb{G}_n/\mu_{p-1},H^q(\mu_{p-1},E_t)) \implies H^{p+q}(\mathbb{G}_n,E_t). 
		\]
		By considering the standard resolution, we have that $H^q(\mu_{p-1},E_t) = 0$ unless $q = 0$ and $t$ is a multiple of $2(p-1)$. 

		Combined, these two conditions imply that the only contributing terms in the $t - s = 0$ column are in filtration degree 0 and 1, and these are not involved in any differentials. The statement of the theorem follows. 
	\end{proof}
	\begin{Rem}
		The same result holds for the $K(n)$-local Picard group. In other words, the $K(n)$-local Picard group and the Picard group of $\cat{D}^{per}(\ComodE)^{\wedge}_{I_n}$ agree, at least up to extension, when $2p-2 \ge n^2$ and $(p-1) \nmid n$. 
	\end{Rem}
  Using known computations, we can be more specific in certain cases. 
	\begin{Thm}\label{thm:height1_kn}
		When $n = 1$ and $p > 2$
		\[
\Pic(\dcat^{per}(\ComodE)^{\wedge}_{I_1}) \cong \mathbb{Z}_p \times \mathbb{Z}/2(p-1). 
		\]
		\end{Thm}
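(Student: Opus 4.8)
The plan is to feed the short exact sequence of \Cref{thm:main_pic_local} into an explicit group-cohomology computation. For $n=1$ the two hypotheses $2p-2\ge n^2$ and $(p-1)\nmid n$ of that theorem reduce to the single condition $p>2$, so it applies and gives
\[
0 \longrightarrow H^1(\mathbb{G}_1, E_0^{\times}) \longrightarrow \Pic(\dcat^{per}(\ComodE)^{\wedge}_{I_1}) \longrightarrow \mathbb{Z}/2 \longrightarrow 0 ;
\]
it then remains to compute $H^1(\mathbb{G}_1, E_0^{\times})$ and to identify this extension.

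First I would unwind the height-one data. Here $\mathbb{G}_1\cong\mathbb{Z}_p^{\times}$ (the Galois factor is trivial), $E_0=\mathbb{W}(\mathbb{F}_p)\cong\mathbb{Z}_p$, and $\mathbb{G}_1$ acts trivially on $E_0$, hence on $E_0^{\times}=\mathbb{Z}_p^{\times}$; so $H^1(\mathbb{G}_1,E_0^{\times})$ is the group of continuous homomorphisms $\mathbb{Z}_p^{\times}\to\mathbb{Z}_p^{\times}$. Using $\mathbb{Z}_p^{\times}\cong\mu_{p-1}\times(1+p\mathbb{Z}_p)\cong\mathbb{Z}/(p-1)\times\mathbb{Z}_p$ (valid since $p$ is odd), and that there is no nonzero continuous homomorphism from a pro-$p$ group to a prime-to-$p$ group, nor from a finite group to the torsion-free group $\mathbb{Z}_p$, the cross terms vanish and
\[
H^1(\mathbb{G}_1, E_0^{\times}) \cong \Hom(\mathbb{Z}/(p-1),\mathbb{Z}/(p-1))\times\Hom_{\mathrm{cts}}(\mathbb{Z}_p,\mathbb{Z}_p)\cong\mathbb{Z}/(p-1)\times\mathbb{Z}_p .
\]

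To identify the extension I would argue as follows. Since $p$ is odd, $\mathbb{Z}_p$ is uniquely $2$-divisible, so
\[
\Ext^1_{\mathbb{Z}}(\mathbb{Z}/2,\mathbb{Z}/(p-1)\times\mathbb{Z}_p)\cong(\mathbb{Z}/(p-1)\times\mathbb{Z}_p)/2\cong\mathbb{Z}/2 ,
\]
and the nonzero class here classifies the extension $\mathbb{Z}_p\times\mathbb{Z}/2(p-1)$ while the zero class classifies $\mathbb{Z}_p\times\mathbb{Z}/2\times\mathbb{Z}/(p-1)$; so it suffices to show our extension is non-split. Under the standard identification $\Ext^1_{\mathbb{Z}}(\mathbb{Z}/2,M)\cong M/2M$, the class of the extension is represented by $2\widetilde g$ for any lift $\widetilde g$ of the generator of $\mathbb{Z}/2$. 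Take $\widetilde g=P(E_0)[1]$, the odd suspension of the unit, which maps to the generator of $\mathbb{Z}/2$ because it is an odd shift; then $2\widetilde g=(P(E_0)[1])^{\otimes 2}=P(E_0)[2]\cong E_*[2]$, and the class of $E_*[2]$ in $H^1(\mathbb{G}_1,E_0^{\times})$ is the class of the invertible comodule $E_2$, namely the homomorphism $t_0\colon g\mapsto g_*u/u$ (the analogue of the class $\eta$ of \Cref{thm:pic_mrw}, without the reduction to $\mathbb{F}_{p^n}^{\times}$). At height one $t_0$ is, up to inversion, the identity of $\mathbb{Z}_p^{\times}$, so its restriction to $\mu_{p-1}$ is a generator of $\Hom(\mu_{p-1},\mu_{p-1})\cong\mathbb{Z}/(p-1)$; as $p-1$ is even, such a generator does not lie in $2\cdot\mathbb{Z}/(p-1)$, so $[E_*[2]]$ is not divisible by $2$ in $H^1(\mathbb{G}_1,E_0^{\times})$. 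Hence the extension class is nonzero and $\Pic(\dcat^{per}(\ComodE)^{\wedge}_{I_1})\cong\mathbb{Z}_p\times\mathbb{Z}/2(p-1)$.

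Everything except the last point is group-theoretic bookkeeping; the part I would be most careful about is the identification of $[E_*[2]]$ inside $H^1(\mathbb{G}_1,E_0^{\times})$. One needs to know that $E_*[2]$ is detected in filtration exactly $1$ in the spectral sequence of \Cref{cor:kn_ss} — guaranteed by the horizontal vanishing line $s\le n^2=1$ already invoked in \Cref{thm:main_pic_local} — that it maps to zero on the filtration-$0$ line $\mathbb{Z}/2$ because it is an even shift, and that its class is the weight-one character $t_0$ rather than a proper multiple of it, i.e.\ that $E_2$ is the tautological line bundle. As a sanity check, the answer agrees with the $K(1)$-local Picard group of \cite{HopkinsMahowaldSadofsky1994Constructions} and with the non-split extension appearing in \Cref{thm:pic_mrw}.
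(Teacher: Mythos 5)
Your proposal is correct and follows essentially the same route as the paper: apply the short exact sequence of \Cref{thm:main_pic_local}, identify $H^1(\mathbb{G}_1,E_0^{\times})\cong\Hom_{\mathrm{cts}}(\mathbb{Z}_p^{\times},\mathbb{Z}_p^{\times})\cong\mathbb{Z}_p\times\mathbb{Z}/(p-1)$ with topological generator $t_0$ corresponding to $P(E_0)[2]=(P(E_0)[1])^{\otimes 2}$, and use that $P(E_0)[1]$ surjects onto the $\mathbb{Z}/2$ quotient to see the extension is the non-split one. The only difference is presentational: you make the $\Ext^1_{\mathbb{Z}}(\mathbb{Z}/2,-)$ bookkeeping explicit where the paper simply observes that $P(E_0)[1]$ topologically generates the whole group.
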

		\begin{proof}
Here we have $\mathbb{G}_1 \cong \bbZ_p^{\times}$, so that there is a short exact sequence 
		\[
0 \to H^1(\mathbb{Z}_p^{\times},\mathbb{Z}_p^{\times}) \to \Pic(\dcat^{per}(\ComodE)^{\wedge}_{I_n}) \to \mathbb{Z}/2 \to 0. 
		\]
		The group $H^1(\mathbb{Z}_p^{\times},\mathbb{Z}_p^{\times}) \cong \mathbb{Z}_p^{\times}$ is topologically generated by the crossed homomorphism 
		\[
		\begin{split}
t_0 \colon \mathbb{Z}_p^{\times} &\to (E_1)_0^{\times}\\
g &\mapsto g_*(u)/u
\end{split}
		\]
		Because $u$ has degree 2, this corresponds to $P(E_0)[2] \cong E_{*+2}$. Thus, this class generates a copy of $\mathbb{Z}_p^{\times} \cong \mathbb{Z}_p \times \mathbb{Z}/(p-1)$ in $\Pic(\dcat^{per}(\ComodE)^{\wedge}_{I_1})$. But $P(E_*)[1]$ is not an element of the kernel, and therefore its image is a generator of $\mathbb{Z}/2$ (we can also see this directly from the spectral sequence). We see that $P(E_0)[1] \cong E_*[1]$ generates $\mathbb{Z}_p \times \mathbb{Z}/2(p-1)$, as claimed. 
	\end{proof}
	\begin{Thm}\label{thm:pickn_ht2}
		When $n = 2$ and $p > 3$, then 
		\[
\Pic(\dcat^{per}(\ComodE)^{\wedge}_{I_2}) \cong \mathbb{Z}_p^2 \times \mathbb{Z}/2(p^2-1). 
		\]
	\end{Thm}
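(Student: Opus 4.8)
The plan is to run the argument of \Cref{thm:height1_kn} with $n = 2$. Since $p$ is prime and $p > 3$, we have $p \ge 5$, so $2p-2 \ge 4 = n^2$ and $(p-1) \nmid 2$; thus \Cref{thm:main_pic_local} applies and gives a short exact sequence
\[
0 \to H^1(\mathbb{G}_2, E_0^{\times}) \to \Pic(\dcat^{per}(\ComodE)^{\wedge}_{I_2}) \to \mathbb{Z}/2 \to 0 ,
\]
so everything reduces to computing $H^1(\mathbb{G}_2, E_0^{\times})$ and then resolving this extension.

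For the cohomology group I would invoke the computation $H^1(\mathbb{G}_2, E_0^{\times}) \cong \mathbb{Z}_p^2 \times \mathbb{Z}/(p^2-1)$, valid for $p \ge 5$ — this is exactly the algebraic input underlying the classical determination of the $K(2)$-local Picard group at large primes, and is carried out in \cite{lader2013resolution}. It can also be assembled from pieces already available: the topological direct-product splitting $E_0^{\times} \cong \mathbb{F}_{p^2}^{\times} \times (1+\mathfrak{m})$, with $\mathfrak{m} = (p,u_1)$ the maximal ideal, induces $H^1(\mathbb{G}_2,E_0^{\times}) \cong H^1(\mathbb{G}_2,\mathbb{F}_{p^2}^{\times}) \times H^1(\mathbb{G}_2, 1+\mathfrak{m})$; the first summand is $\mathbb{Z}/(p^2-1)$, generated by $\eta$, by \Cref{prop:group_coh_calculations}, and the second is a finitely generated $\mathbb{Z}_p$-module, hence has no prime-to-$p$ torsion, and is identified with $\mathbb{Z}_p^2$ by the stabilizer-group calculations of \cite{lader2013resolution}. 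In particular the torsion subgroup of $H^1(\mathbb{G}_2,E_0^{\times})$ is exactly $H^1(\mathbb{G}_2,\mathbb{F}_{p^2}^{\times}) \cong \mathbb{Z}/(p^2-1)$, and the reduction map $E_0^{\times} \twoheadrightarrow \mathbb{F}_{p^2}^{\times}$ induces the projection onto it.

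To resolve the extension I proceed exactly as at $n = 1$. The class $P(E_0)[1] \cong E_*[1]$ maps to the generator of $\mathbb{Z}/2$, since its image in $\Pic(\dcat^{per}(\Mod_{E_0})^{\wedge}_{I_2}) \cong \mathbb{Z}/2$ of \Cref{lem:local_pic} is the generator; meanwhile $2\,[P(E_0)[1]] = [P(E_0)[2]]$ lies in the kernel $H^1(\mathbb{G}_2, E_0^{\times})$ — because $E_*[2] \cong E_*$ by even-periodicity — and is the class $t_0$ of the crossed homomorphism $g \mapsto g_* u / u$, i.e. the twist by the tautological bundle, detected in filtration $1$. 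By \Cref{prop:group_coh_calculations} the class $t_0$ reduces to $\eta$ in $H^1(\mathbb{G}_2,\mathbb{F}_{p^2}^{\times})$, so $t_0$ generates the torsion subgroup $\mathbb{Z}/(p^2-1) \subseteq H^1(\mathbb{G}_2, E_0^{\times})$. Since $2$ is a unit in $\mathbb{Z}_p$ and in $\mathbb{Z}/(p^2-1)$, the extension is determined by its restriction to this torsion part, where the relation $2\,[P(E_0)[1]] = t_0$ forces it to be non-split: after correcting $P(E_0)[1]$ by a suitable element of the summand $\mathbb{Z}_p^2$ (possible since $2$ acts invertibly there), the cyclic subgroup it generates has order $2(p^2-1)$, meets $\mathbb{Z}_p^2$ trivially, and together with $\mathbb{Z}_p^2$ fills out the whole group. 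Hence $\Pic(\dcat^{per}(\ComodE)^{\wedge}_{I_2}) \cong \mathbb{Z}_p^2 \times \mathbb{Z}/2(p^2-1)$.

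The one genuinely hard step is the identification $H^1(\mathbb{G}_2, 1+\mathfrak{m}) \cong \mathbb{Z}_p^2$: whereas at height $1$ the group $\mathbb{G}_1 = \mathbb{Z}_p^{\times}$ makes $H^1$ immediate, this requires the structure theory of the Morava stabilizer group at $p \ge 5$ — the two $\mathbb{Z}_p$-summands being accounted for by the reduced-determinant homomorphism and by $t_0$ — which is precisely the content imported from \cite{lader2013resolution}. Everything after that is the same formal bookkeeping as in the proofs of \Cref{thm:height1_kn} and \Cref{thm:pic_mrw}.
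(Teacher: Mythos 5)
Your argument is correct and is essentially the paper's own proof: both invoke the computation $H^1(\mathbb{G}_2,E_0^{\times})\cong\mathbb{Z}_p^2\times\mathbb{Z}/(p^2-1)$ from the literature (the paper cites Behrens's lecture notes and Lader's thesis, attributing it to Hopkins), identify $t_0$ with the class of $P(E_0)[2]$, and conclude from the non-split extension exactly as you do, your version merely spelling out the bookkeeping the paper compresses into ``the extension does not split.'' The only slip is your claim that $2$ is a unit in $\mathbb{Z}/(p^2-1)$ --- it is not, since $p^2-1$ is even --- but this is harmless, because what your argument actually uses is that $2$ is invertible on the $\mathbb{Z}_p^2$ summand, which kills $\Ext^1(\mathbb{Z}/2,\mathbb{Z}_p^2)$, reduces the extension question to the torsion summand, and makes your corrected generator of order $2(p^2-1)$ work as stated.
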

  \begin{proof}
    The computation of $H^*(\mathbb{G}_2,E_0^{\times})$ (due to Hopkins) can be found in \cite[Theorem 8.1]{Behrens2012homotopy} (or \cite[Theorem 5.22]{lader2013resolution}); in particular, we have
    \[
    H^*(\mathbb{G}_2,E_0^{\times}) \cong \bbZ_p^2 \times \bbZ/(p^2-1)
    \]
    This is generated topologically by the crossed homomorphisms 
        \[
    \begin{split}
t_0 \colon \mathbb{G}_2 &\to E_0^{\times}\\
g &\mapsto g_*(u)/u
\end{split}
\]
    and the determinant element $\det$ given as the homomorphism
    \[
\mathbb{G}_2 \xrightarrow{\det} \Z_p^{\times} \xrightarrow{\subseteq} E_0^{\times}. 
    \] 
    As in \Cref{thm:height1_kn}, $t_0$ corresponds to $P(E_0)[2] \cong E_{*+2}$, and the extension
    \[
0 \to \bbZ_p^2 \times \bbZ/(p^2-1) \to \       
\Pic(\dcat^{per}(\ComodE)^{\wedge}_{I_2}) \to \bbZ/2 \to 0
    \]
    does not split, giving the result. 
  \end{proof}
\appendix
\section{The monoidal Barr--Beck theorem}\label{sec:monoida_barr}
In this appendix we review the monoidal Barr--Beck theorem due to Mathew--Naumann--Noel \cite[Proposition 5.29]{MathewNaumannNoel2017Nilpotence} (inspired by earlier work of Balmer--Dell'Ambrogio--Sanders \cite{BalmerDellAmbrogioSers2015Restriction}), and some extensions of it due to Behrens and Shah \cite{BehrensShah2020equivariant}. We claim no originality for these results; we simply include them for the convenience of the reader. 

\begin{Thm}[Mathew--Naumann--Noel]\label{thm:mnn_bb}
  Let $F \colon \cat C \longleftrightarrows \dcat \colon G$ be a symmetric monoidal adjunction between presentably stable $\infty$-categories. Suppose that the adjunction satisfies the following conditions:
  \begin{enumerate}
  \item $G$ is conservative.  
  \item $G$ commutes with colimits. 
  \item The adjunction satisfies the projection formula: the natural map
  \[
G(X) \otimes Y \to G(X \otimes F(Y))
  \]
  is an equivalence for all $X \in \dcat$ and $Y \in \cat C$. 
\end{enumerate}
  Then there is an equivalence of symmetric monoidal presentably stable  $\infty$-categories $\dcat \simeq \Mod_{\cat C}(G(\unit_{\dcat}))$ under which the adjunction $(F,G)$ is equivalent to the extension of scalars/restriction adjunction along the morphism of commutative algebra objects $\unit_{\cat C} \to G(\unit_{\dcat})$. 
\end{Thm}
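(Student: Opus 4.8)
The plan is to realize $\dcat$ as the $\infty$-category of modules over the commutative algebra $A \coloneqq G(\unit_{\dcat})$ by combining the Barr--Beck--Lurie monadicity theorem with the projection formula, and then to promote the resulting equivalence to one of symmetric monoidal $\infty$-categories.

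First I would set up the algebra. Since $F$ is symmetric monoidal, its right adjoint $G$ is lax symmetric monoidal, so $A = G(\unit_{\dcat})$ is canonically a commutative algebra object of $\cat C$. Applying the lax monoidal functor $G$ to the tautological $\unit_{\dcat}$-module structure carried by every object of $\dcat$ yields a lax symmetric monoidal lift $\widetilde{G}\colon \dcat \to \Mod_{\cat C}(A)$ lying over $\cat C$; dually $F$ factors as $\cat C \xrightarrow{\mathrm{Free}_A} \Mod_{\cat C}(A) \to \dcat$ via the counit $FG(\unit_{\dcat}) \to \unit_{\dcat}$, and one records a comparison $\widetilde{G}\circ F \simeq \mathrm{Free}_A$ of symmetric monoidal functors.

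Next I would invoke monadicity. Conditions (1) and (2) are exactly the hypotheses of the $\infty$-categorical Barr--Beck theorem \cite[Theorem 4.7.3.5]{lurie-higher-algebra} --- (2) in particular forces $G$ to preserve geometric realizations of $G$-split simplicial objects --- so the canonical comparison $\dcat \to \Mod_{T}(\cat C)$ to modules over the monad $T = G\circ F$ is an equivalence, compatibly with the forgetful functors to $\cat C$. The projection formula (3) with $X = \unit_{\dcat}$ then gives a natural equivalence $T(Y) = GF(Y) \simeq G(\unit_{\dcat})\otimes Y = A\otimes Y$; unwinding the unit of $T$ (induced by the adjunction unit) and the multiplication of $T$ (induced by the counit) one checks this is an equivalence of \emph{monads} between $T$ and the free-$A$-module monad $A\otimes(-)$. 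Hence $\Mod_T(\cat C) \simeq \Mod_{\cat C}(A)$, giving an equivalence $\widetilde{G}\colon \dcat \xrightarrow{\sim} \Mod_{\cat C}(A)$ over $\cat C$ that carries $F$ to $\mathrm{Free}_A$ and $G$ to the forgetful functor.

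Finally I would upgrade $\widetilde{G}$ to a symmetric monoidal equivalence by checking that its lax structure maps $\widetilde{G}(X)\otimes_A \widetilde{G}(Y) \to \widetilde{G}(X\otimes_{\dcat} Y)$ are equivalences: for $X,Y$ in the image of $F$ this is immediate from $\widetilde{G}F \simeq \mathrm{Free}_A$ together with $\mathrm{Free}_A(Z)\otimes_A \mathrm{Free}_A(W)\simeq \mathrm{Free}_A(Z\otimes W)$, and for general $X,Y$ one resolves by the canonical bar complex of free objects and uses that $G$ preserves colimits and that $-\otimes_A -$ preserves colimits in each variable, so the structure map is a colimit of equivalences. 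The inverse of $\widetilde{G}$ then identifies the symmetric monoidal adjunction $(F,G)$ with extension of scalars/restriction along $\unit_{\cat C}\to A$, as asserted. The one genuinely delicate step is this last monoidal enhancement --- equivalently, producing a \emph{symmetric monoidal} Barr--Beck equivalence rather than merely an equivalence of underlying $\infty$-categories --- since the lax coherence data must be tracked through the bar resolution; everything else is a formal consequence of Barr--Beck--Lurie and the projection formula.
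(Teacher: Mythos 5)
The paper does not actually prove this theorem: it is quoted from Mathew--Naumann--Noel \cite[Proposition 5.29]{MathewNaumannNoel2017Nilpotence} with an explicit disclaimer of originality, so there is no in-paper argument to compare against. Your sketch is a correct reconstruction of the standard proof and is essentially the argument of the cited source: Barr--Beck--Lurie monadicity from conditions (1) and (2), identification of the monad $GF$ with $A\otimes(-)$ via the projection formula at $X=\unit_{\dcat}$, and a bar-resolution argument for the monoidal enhancement. The one step I would tighten is the phrase ``one checks this is an equivalence of monads'': in the $\infty$-categorical setting, matching the unit and multiplication does not by itself produce an equivalence of monads, since all higher coherences must be supplied. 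The standard repair is exactly what your first paragraph already sets up --- work with the canonical lax symmetric monoidal lift $\widetilde{G}\colon \dcat\to\Mod_{\cat C}(A)$ lying over $\cat C$, note that both forgetful functors to $\cat C$ are monadic and that $\widetilde{G}$ preserves colimits and restricts to an equivalence on free objects by the projection formula, and conclude that $\widetilde{G}$ is an equivalence by the comparison form of Barr--Beck; alternatively, appeal directly to \cite[Proposition 4.8.5.8]{lurie-higher-algebra}, which packages both the monadicity and the symmetric monoidal refinement in one statement.
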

\begin{Rem}\label{rem:localized_adjunction}
  Given an adjunction $F \colon \cat C \longleftrightarrows \dcat \colon G$ and an arbitrary $T \in \cat C$, there is an induced symmetric monoidal adjunction 
  \[
F' \colon L_T\cat C \longleftrightarrows L_{F(T)}\dcat \colon G'
  \]
Here $G'$ is the restriction of $G$ and $F' = L_{F(T)}F$. See \cite[Lemma 3.7]{BehrensShah2020equivariant}, which also proves the following. 
\end{Rem}
\begin{Prop}[Behrens--Shah]\label{prop:bs_complete}
  Let $F \colon \cat C \longleftrightarrows \dcat \colon G$ be as in \Cref{thm:mnn_bb}, and let $T \in \cat C$. Then the induced symmetric monoidal adjunction of \Cref{rem:localized_adjunction}
  \[
F' \colon L_T\cat C \longleftrightarrows L_{F(T)}\dcat \colon G'
  \]
  satisfies the conditions of \Cref{thm:mnn_bb}, and there is an equivalence $L_TG \simeq G'L_{F(T)}$, i.e., the diagram
% https://q.uiver.app/?q=WzAsNCxbMCwwLCJcXGNhdCBDIl0sWzEsMCwiXFxjYXQgRCJdLFswLDEsIkxfVFxcY2F0IEMiXSxbMSwxLCJMX3tGKFQpfVxcY2F0IEQiXSxbMSwwLCJHIl0sWzEsMywiTF97RihUKX0iLDAseyJvZmZzZXQiOjF9XSxbMywyLCJHJyJdLFswLDIsIkxfVCIsMix7Im9mZnNldCI6MX1dXQ==
\begin{equation}\begin{tikzcd}
  {\cat C} & {\cat D} \\
  {L_T\cat C} & {L_{F(T)}\cat D}
  \arrow["G"', from=1-2, to=1-1]
  \arrow["{L_{F(T)}}", shift right=1, from=1-2, to=2-2]
  \arrow["{G'}", from=2-2, to=2-1]
  \arrow["{L_T}"', shift right=1, from=1-1, to=2-1]
\end{tikzcd}\end{equation}
commutes. In particular, there are equivalences of  symmetric monoidal presentably stable $\infty$-categories 
  \[
L_{F(T)}\dcat \simeq L_{F(T)}\Mod_{\cat C}(G(\unit_{\dcat})) \simeq \Mod_{ L_T\cat C}(L_TG(\unit_{\dcat})).
  \]
\end{Prop}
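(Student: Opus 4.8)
The plan is to reduce every claim to the unlocalized case via the canonical acyclization (i.e.\ fiber-of-completion) sequences. First I would recall, or cite \cite[Lemma 3.7]{BehrensShah2020equivariant}, that the localized adjunction $(F',G')$ genuinely exists and is symmetric monoidal: because $F$ is symmetric monoidal and preserves all colimits it carries the $T$-acyclic objects of $\cat C$ (those $Y$ with $Y\otimes T\simeq 0$) to $F(T)$-acyclic objects of $\dcat$, so $G$ takes $F(T)$-local objects to $T$-local objects and thereby restricts to $G'\colon L_{F(T)}\dcat\to L_T\cat C$; dually $F'$ is $L_{F(T)}\circ F$ restricted to $L_T\cat C$, which is symmetric monoidal since both $L_{F(T)}$ and $F$ are. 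The one point requiring care here is that for a localization of this ``completion'' type the class of acyclics is \emph{exactly} $\{Y : Y\otimes T\simeq 0\}$; this holds because that class is already a localizing, accessible subcategory and so coincides with the acyclics of the Bousfield localization it determines.

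The heart of the argument is the commuting square $L_T G\simeq G'L_{F(T)}$. For $Z\in\dcat$ I would take the acyclization fiber sequence $C\to Z\to L_{F(T)}Z$, in which $C$ is $F(T)$-acyclic, i.e.\ $C\otimes F(T)\simeq 0$. Applying $G$, which is exact being a right adjoint between stable categories, gives a fiber sequence $GC\to GZ\to G(L_{F(T)}Z)$ in $\cat C$. Its last term is $T$-local (as $G$ sends $F(T)$-locals to $T$-locals), and its first term is $T$-acyclic because the projection formula gives $GC\otimes T\simeq G(C\otimes F(T))\simeq G(0)\simeq 0$. Hence $GZ\to G(L_{F(T)}Z)$ is a $T$-localization map, which is precisely the asserted equivalence, naturally in $Z$.

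With the square available, the hypotheses of \Cref{thm:mnn_bb} for $(F',G')$ become routine. Conservativity of $G'$ is inherited from that of $G$ since $L_{F(T)}\dcat$ is a full subcategory of $\dcat$. That $G'$ preserves colimits follows because colimits in the localized categories are obtained by applying $L_T$ (resp.\ $L_{F(T)}$) to the colimit in the ambient category, so the claim reduces to colimit-preservation of $G$ together with the square $L_T G\simeq G'L_{F(T)}$. For the projection formula I would unwind the localized tensor products, writing $G'(X)\htimes Y\simeq L_T(GX\otimes Y)$ and $X\htimes F'(Y)\simeq L_{F(T)}(X\otimes FY)$ (using the standard identity $L(X\otimes LW)\simeq L(X\otimes W)$, valid for a $\otimes$-compatible localization, which both of ours are); the original projection formula $GX\otimes Y\simeq G(X\otimes FY)$ and the commuting square then identify the two sides compatibly with the comparison map. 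Finally, \Cref{thm:mnn_bb} applied to $(F',G')$ gives $L_{F(T)}\dcat\simeq\Mod_{L_T\cat C}(G'(\unit_{L_{F(T)}\dcat}))$, and since $\unit_{L_{F(T)}\dcat}=L_{F(T)}\unit_\dcat$ the square identifies $G'(\unit_{L_{F(T)}\dcat})$ with $L_T G(\unit_\dcat)$; the middle equivalence $L_{F(T)}\dcat\simeq L_{F(T)}\Mod_{\cat C}(G(\unit_\dcat))$ is then just transport of this localization along the equivalence $\dcat\simeq\Mod_{\cat C}(G(\unit_\dcat))$ coming from \Cref{thm:mnn_bb} for $(F,G)$.

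I expect the only genuinely non-formal point — the main obstacle — to be establishing that the acyclics of $L_A$ are precisely the objects annihilated by $-\otimes A$, so that the fiber $C$ above is literally $\otimes F(T)$-trivial and the projection formula can be invoked; everything else amounts to keeping the identifications of units and tensor products consistent across the three categories in play.
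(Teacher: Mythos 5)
Your proposal is correct. The paper itself gives no proof of this proposition, deferring entirely to \cite[Lemma 3.7]{BehrensShah2020equivariant}, and your argument is essentially the standard one that reference carries out: the crux is exactly the point you isolate, namely that the projection formula forces $G$ to send $F(T)$-acyclics to $T$-acyclics (since $G(C)\otimes T\simeq G(C\otimes F(T))\simeq 0$), whence applying $G$ to the acyclization fiber sequence exhibits $G(L_{F(T)}Z)$ as the $T$-localization of $G(Z)$; your justification that the acyclics of $L_A$ are precisely $\{Y: Y\otimes A\simeq 0\}$ (a localizing, accessible, tensor-ideal subcategory, hence equal to the left orthogonal of its right orthogonal) is also the right one, and the remaining verifications are formal as you describe.
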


\begin{Rem}\label{rem:localized_module}
  Given an adjunction $F \colon \cat C \longleftrightarrows \dcat \colon G$ and a commutative algebra object $T \in \CAlg(\cat C)$, there is an induced symmetric monoidal adjunction 
  \[
F' \colon \Mod_{\cat C}(T) \longleftrightarrows \Mod_{\cat D}(F(T)) \colon G',
  \]
where $F'$ and $G'$ are computed by $F$ and $G$ after forgetting the module structure. The following is \cite[Lemma 3.9]{BehrensShah2020equivariant}.
\end{Rem}
\begin{Prop}[Behrens--Shah]\label{prop:bs_module}
  Let $F \colon \cat C \longleftrightarrows \dcat \colon G$ be as in \Cref{thm:mnn_bb}, and let $T \in \CAlg(\cat C)$. Then the induced symmetric monoidal adjunction 
  \[
F' \colon \Mod_{\cat C}(T) \longleftrightarrows \Mod_{\dcat}(F(T)) \colon G'
  \]
  satisfies the conditions of \Cref{thm:mnn_bb}. In particular, there are equivalences of presentably symmetric monoidal $\infty$-categories 
  \[
\Mod_{\dcat}(F(T)) \simeq \Mod_{\Mod_{\cat C}(T)}(G'(F(T))) \simeq \Mod_{\cat C}(G'(F(T))). 
  \]
\end{Prop}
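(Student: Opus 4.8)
The plan is to take the symmetric monoidal adjunction $(F',G')$ as already constructed (it is recorded in \Cref{rem:localized_module}, following \cite[Lemma 3.9]{BehrensShah2020equivariant}): concretely $F'$ is the base-change/free-module functor transported by the symmetric monoidal functor $F$, while $G'$ applies $G$ on underlying objects and then restricts the resulting $G(F(T))$-module structure along the unit map $T \to G(F(T))$ in $\CAlg(\cat C)$. It remains to verify the three hypotheses of \Cref{thm:mnn_bb} for $(F',G')$, after which both displayed equivalences are immediate.

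First I would dispatch conservativity and colimit-preservation of $G'$, which are formal. The forgetful functors $\Mod_{\cat C}(T) \to \cat C$ and $\Mod_{\dcat}(F(T)) \to \dcat$ are conservative and create all colimits, and on underlying objects $G'$ is simply $G$. Since $G$ is conservative, a morphism of $F(T)$-modules is an equivalence as soon as $G'$ carries it to an equivalence; since $G$ preserves colimits, so does $G'$.

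The substantive point — and the step I expect to be the main obstacle — is the projection formula for $(F',G')$: for $N \in \Mod_{\dcat}(F(T))$ and $M \in \Mod_{\cat C}(T)$ the canonical map $G'(N) \otimes_T M \to G'(N \otimes_{F(T)} F'(M))$ must be an equivalence. Because $F'$ and $G'$ preserve colimits and the relative tensor products preserve colimits in each variable separately, and because every $T$-module is a colimit of free modules $T \otimes Y$ (bar resolution), it suffices to treat $M = T \otimes Y$ with $Y \in \cat C$. There $G'(N) \otimes_T (T \otimes Y) \simeq G'(N) \otimes Y$ in $\cat C$, while $F'$ symmetric monoidal gives $F'(T \otimes Y) \simeq F(T) \otimes F(Y)$, so $N \otimes_{F(T)} F'(T \otimes Y) \simeq N \otimes F(Y)$; the map in question thus becomes $G'(N) \otimes Y \to G'(N \otimes F(Y))$. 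Passing to underlying objects in $\cat C$, with $N_0 \in \dcat$ the underlying object of $N$, this is precisely the projection-formula map $G(N_0) \otimes Y \to G(N_0 \otimes F(Y))$ for the original adjunction, which is an equivalence by the hypotheses of \Cref{thm:mnn_bb}. Since the forgetful functor $\Mod_{\cat C}(T) \to \cat C$ is conservative, the map of $T$-modules is an equivalence, and the reduction above then gives the general case.

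Finally I would apply \Cref{thm:mnn_bb} to the adjunction $(F',G')$: it yields an equivalence $\Mod_{\dcat}(F(T)) \simeq \Mod_{\Mod_{\cat C}(T)}(G'(\unit))$, and the unit object of $\Mod_{\dcat}(F(T))$ is $F(T)$ itself, so this reads $\Mod_{\dcat}(F(T)) \simeq \Mod_{\Mod_{\cat C}(T)}(G'(F(T)))$. The remaining equivalence $\Mod_{\Mod_{\cat C}(T)}(G'(F(T))) \simeq \Mod_{\cat C}(G'(F(T)))$ is the collapse of iterated module categories \cite[Corollary 3.4.1.9]{lurie-higher-algebra}, applied to the commutative algebra $G'(F(T)) \in \CAlg(\Mod_{\cat C}(T))$ regarded as an object of $\CAlg(\cat C)$ via the forgetful functor. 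Presentability and compatibility of the symmetric monoidal structures are inherited throughout from $\cat C$ and $\dcat$, so all categories in sight are presentably symmetric monoidal.
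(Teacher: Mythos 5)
Your argument is correct. Note that the paper itself gives no proof of this proposition: it is stated as \cite[Lemma 3.9]{BehrensShah2020equivariant} and deferred entirely to that reference (the appendix explicitly claims no originality). Your write-up reconstructs the standard argument in essentially the expected way: conservativity and colimit-preservation of $G'$ are inherited from $G$ because the forgetful functors to $\cat C$ and $\dcat$ are conservative and create colimits; the projection formula for $(F',G')$ reduces, via the bar resolution exhibiting every $T$-module as a sifted colimit of free modules $T\otimes Y$ and the fact that both sides of the comparison map preserve colimits in $M$, to the projection formula $G(N_0)\otimes Y \to G(N_0\otimes F(Y))$ for the original adjunction; and the two displayed equivalences then follow from \Cref{thm:mnn_bb} together with the collapse of iterated module categories \cite[Corollary 3.4.1.9]{lurie-higher-algebra}, exactly as the paper itself uses that corollary elsewhere (e.g.\ in the proof of \Cref{prop:descendable_module_Cats2}). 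I see no gap.
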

\begin{Rem}\label{Rem:commutative_diagram_modules}
  Let $\alpha^* \colon \cat C \leftrightarrows \Mod_{\cat C}(T) \colon \alpha_*$ denote the extension of scalars/restriction adjunction along $\unit_{\cat C} \to T$. We note that the diagram
  % % https://q.uiver.app/?q=WzAsNCxbMCwwLCJcXGNhdCBDIl0sWzEsMCwiXFxjYXQgRCJdLFswLDEsIlxcTW9kX3tcXGNhdCBDfShBKSJdLFsxLDEsIlxcTW9kX3tcXGNhdCBEfShGKEEpKSJdLFswLDEsIkYiLDAseyJvZmZzZXQiOi0xfV0sWzEsMCwiRyIsMCx7Im9mZnNldCI6LTF9XSxbMiwzLCJGJyIsMCx7Im9mZnNldCI6LTF9XSxbMywyLCJHJyIsMCx7Im9mZnNldCI6LTF9XSxbMCwyLCJcXGFscGhhXioiLDIseyJvZmZzZXQiOjF9XSxbMiwwLCJcXGFscGhhXyoiLDIseyJvZmZzZXQiOjF9XSxbMSwzLCJcXGJldGFeKiIsMix7Im9mZnNldCI6MX1dLFszLDEsIlxcYmV0YV8qIiwyLHsib2Zmc2V0IjoxfV1d
\[\begin{tikzcd}
    {\cat C} & {\cat D} \\
    {\Mod_{\cat C}(A)} & {\Mod_{\cat D}(F(A))}
    \arrow["F", shift left=1, from=1-1, to=1-2]
    \arrow["G", shift left=1, from=1-2, to=1-1]
    \arrow["{F'}", shift left=1, from=2-1, to=2-2]
    \arrow["{G'}", shift left=1, from=2-2, to=2-1]
    \arrow["{\alpha^*}"', shift right=1, from=1-1, to=2-1]
    \arrow["{\alpha_*}"', shift right=1, from=2-1, to=1-1]
    \arrow["{\beta^*}"', shift right=1, from=1-2, to=2-2]
    \arrow["{\beta_*}"', shift right=1, from=2-2, to=1-2]
\end{tikzcd}\]
commutes, up to natural equivalence, in the sense that
\[ 
\begin{tikzcd}[row sep=tiny]
F' \circ \alpha^* \simeq \beta^* \circ F & G' \circ \beta^* \simeq \alpha^* \circ G \\
\beta_* \circ F' \simeq F \circ \alpha_* & G \circ \beta_* \simeq \alpha_* \circ G'.
\end{tikzcd}
\]
Indeed, $F' \circ \alpha^* \simeq \beta^* \circ F $ is the claim that $F(T) \otimes F(M) \simeq F(T \otimes M)$ for $M \in \cat C$, which follows because $F$ is symmetric monoidal. Taking right adjoints we see that $G \circ \beta_* \simeq \alpha_* \circ G$. The statement $G' \circ \beta^* \simeq \alpha^* \circ G$ is that claim that $G(F(T) \otimes N) \simeq T \otimes G(N)$ for $N \in \cat D$, which follows from the projection formula, while $\beta_* \circ F' \simeq F \circ \alpha_*$ follows from that fact that $F'$ is defined as $F$ after forgetting the module structure. 
\end{Rem}
\bibliographystyle{alpha}
\bibliography{stable} 
\end{document}